
\documentclass[11pt]{amsart}

\addtolength{\hoffset}{-1cm}
\addtolength{\textwidth}{1.6cm}

\usepackage{amsmath, amssymb, epsfig}
\usepackage{amsfonts}
\usepackage{mathrsfs}
\usepackage[arrow,matrix,curve,cmtip,ps]{xy}
\usepackage{xcolor}

\usepackage{amsthm}

\allowdisplaybreaks

\newtheorem{theorem}{Theorem}[section]
\newtheorem{lemma}[theorem]{Lemma}
\newtheorem{proposition}[theorem]{Proposition}

\newtheorem{corollary}[theorem]{Corollary}

\newtheorem*{theorem*}{Theorem}
\theoremstyle{remark}
\newtheorem{remark}[theorem]{Remark}
\newtheorem{definition}[theorem]{Definition}
\newtheorem{example}[theorem]{Example}


\numberwithin{equation}{section}


\newcommand{\Z}{\mathbb{Z}}
\newcommand{\Q}{\mathbb{Q}}
\newcommand{\N}{\mathbb{N}}
\newcommand{\R}{\mathbb{R}}
\newcommand{\C}{\mathbb{C}}

\newcommand{\F}{\mathbb{F}}
\newcommand{\im}{\operatorname{im }}
\newcommand{\coker}{\operatorname{coker }}
\newcommand{\Ab}{\textnormal{\textbf{Ab}}}

\newcommand{\rank}{\operatorname{rank}}
\newcommand{\reg}{\textnormal{reg}}
\newcommand{\sing}{\textnormal{sing}}
\newcommand{\corank}{\operatorname{corank}}
\newcommand{\cha}{\operatorname{char}}
\newcommand{\diag}{\operatorname{diag}}
\newcommand{\bigslant}[2] {{\left.\raisebox{.2em}{$#1$} \middle/ \raisebox{-.2em}{$#2$}\right.}}
\newcommand{\modulo}{\operatorname{mod}}

\newcommand{\sign}{\operatorname{sign}}

\newcommand{\gae}{\lower 2pt \hbox{$\, \buildrel {\scriptstyle >}\over {\scriptstyle
\sim}\,$}}

\newcommand{\lae}{\lower 2pt \hbox{$\, \buildrel {\scriptstyle <}\over {\scriptstyle
\sim}\,$}}

\newcommand{\MU}[1]{
\setbox0\hbox{$#1$}
\setbox1\hbox{$W$}
\ifdim\wd0>\wd1 #1^{\sim} \else \widetilde{#1} \fi
}


\begin{document}
\title[$K$-theory for Leavitt path algebras]{$\boldsymbol{K}$-theory for Leavitt path algebras: computation and classification}

\author{James Gabe, Efren Ruiz, Mark Tomforde, and Tristan Whalen}

\address{Department of Mathematical Sciences, University of Copenhagen, Universitetsparken 5,
DK-2100 Copenhagen, Denmark}
\email{gabe@math.ku.dk}
\address{Department of Mathematics, University of Hawaii, Hilo, 200 W. Kawili St., Hilo, Hawaii, 96720-4091 USA}
\email{ruize@hawaii.edu }
\address{Department of Mathematics \\ University of Houston \\ Houston, TX 77204-3008 \\USA}
\email{tomforde@math.uh.edu}
\address{Department of Mathematics \\ University of Houston \\ Houston, TX 77204-3008 \\USA}
\email{tgwhalen@math.uh.edu}

\thanks{This work was supported by the Danish National Research Foundation through the Centre for Symmetry and Deformation (DNRF92).  The second author was supported by a grant from the Simons Foundation (\#279369 to Efren Ruiz). The third author was supported by a grant from the Simons Foundation (\#210035 to Mark Tomforde).}

\date{\today}

\subjclass[2010]{16D70, 19D50}

\keywords{Leavitt path algebra, algebraic $K$-theory, Morita equivalence, classification, number field}

\begin{abstract}
We show that the long exact sequence for $K$-groups of Leavitt path algebras deduced by Ara, Brustenga, and Corti\~nas extends to Leavitt path algebras of countable graphs with infinite emitters in the obvious way.  Using this long exact sequence, we compute explicit formulas for the higher algebraic $K$-groups of Leavitt path algebras over certain fields, including all finite fields and all algebraically closed fields.  We also examine classification of Leavitt path algebras using $K$-theory.  
It is known that the $K_0$-group and $K_1$-group do not suffice to classify purely infinite simple unital Leavitt path algebras of infinite graphs up to Morita equivalence when the underlying field is the rational numbers.  We prove for these Leavitt path algebras, if the underlying field is a number field (which includes the case when the field is the rational numbers), then the pair consisting of the $K_0$-group and the $K_6$-group does suffice to  classify these Leavitt path algebras up to Morita equivalence.
\end{abstract}

\maketitle

\section{Introduction}

A major program in the study of $C^*$-algebras is the classification of $C^*$-algebras using invariants provided by (topological) $K$-theory.  The strongest classification results have been obtained in the case of simple $C^*$-algebras, and one of the preeminent results in this setting is the Kirchberg-Phillips classification theorem (see \cite{Kir-pre} and \cite[Theorem~4.2.4]{Phi}), which states that under mild hypotheses a purely infinite simple $C^*$-algebra is classified up to Morita equivalence by the pair consisting of the $K_0$-group and $K_1$-group of the $C^*$-algebra.

It has been asked whether similar classifications exist for algebras --- in particular, whether certain classes of purely infinite simple algebras can be classified by (algebraic) $K$-theory.  When examining this question, one immediately encounters two issues: First, there are many examples of purely infinite simple algebras for which  such a classification does not hold; so one must restrict attention to certain subclasses of purely infinite simple algebras, possibly ones that are somehow very ``similar" or ``close" to being $C^*$-algebras.  Second, unlike topological $K$-theory, where Bott periodicity implies that all $K$-groups other than the $K_0$-group and the $K_1$-group are redundant, the algebraic $K_n$-groups can be distinct for every $n \in \N$, and thus we may need to include all the algebraic $K$-groups in the invariant.  This second issue also raises the concern of whether such a classification is tractable --- if one does in fact need the algebraic $K_n$-groups for any $n \in \N$, then one must ask: How easily can the algebraic $K_n$-groups be calculated for a given algebra?

Recently a great deal of progress has been obtained in classifying Leavitt path algebras, which are the algebraic analogue of graph $C^*$-algebras, using algebraic $K$-theory.  If $E$ is a (directed) graph, the graph $C^*$-algebra $C^*(E)$ is constructed from $E$ in analogy with the construction of Cuntz-Krieger algebras.  When $C^*(E)$ is separable, purely infinite and simple, $C^*(E)$ falls into the class of $C^*$-algebras to which the Kirchberg-Phillips theorem applies, and $C^*(E)$ is classified up to Morita equivalence by the pair $(K^\textnormal{top}_0(C^*(E)), K^\textnormal{top}_1(C^*(E)))$.  Furthermore, if $E$ is also a finite graph, then  $K^\textnormal{top}_0(C^*(E))$ is a finitely generated abelian group and $K^\textnormal{top}_1(C^*(E))$ is isomorphic to the free part of $K^\textnormal{top}_0(C^*(E))$, so that $C^*(E)$ is classified up to Morita equivalence by the single group $K^\textnormal{top}_0(C^*(E))$.

On the algebraic side, as shown in \cite{ap:JA}, \cite{amp}, and \cite{ap} one may perform a similar construction to produce algebras from graphs.  If $E$ is a graph and $\mathsf{k}$ is any field, then one may mimic the graph $C^*$-algebra construction to produce a $\mathsf{k}$-algebra $L_\mathsf{k} (E)$, which is called the \emph{Leavitt path algebra} of $E$ over $\mathsf{k}$.  For a given graph $E$, the algebra $L_\mathsf{k} (E)$ (for any field $\mathsf{k}$) has many properties in common with $C^*(E)$.  Using results from symbolic dynamics, Abrams, Louly, Pardo, and Smith showed in \cite[Theorem~1.25]{alps} that if $E$ is a finite graph and $L_\mathsf{k} (E)$ is purely infinite and simple, then $L_\mathsf{k} (E)$ is determined up to Morita equivalence by the pair consisting of the group $K_0^\textnormal{alg} (L_\mathsf{k} (E))$ and the value $\sign (\det (I-A_E^t))$, where $A_E$ is the vertex matrix of the graph $E$.  The authors of \cite{alps} were unable to determine if the ``sign of the determinant" is a necessary part of the invariant; i.e., whether the algebraic $K_0$-group alone suffices to classify $L_\mathsf{k}(E)$.  This is currently a question of intense interest in the subject of Leavitt path algebras; in particular:  Do there exist finite graphs with purely infinite simple Leavitt path algebras that are Morita equivalent but for which the signs of the determinants are different?  It is also interesting to note that the field $\mathsf{k}$ does not appear in the invariant, and --- similar to the graph $C^*$-algebra situation --- the only algebraic $K$-group needed is the $K_0$-group.  Thus the invariant $(K_0^\textnormal{alg} (L_\mathsf{k} (E)), \sign (\det (I-A_E^t)) )$ can be easily computed from $E$ and does not depend on $\mathsf{k}$.

Building on recent work of S\o rensen in \cite{sorensen}, the second and third authors showed in \cite[Theorem~7.4]{rt} that when $E$ is a countable graph with a finite number of vertices but an infinite number of edges, then the complete Morita equivalence invariant for $L_\mathsf{k}(E)$ is the pair consisting of the group $K^\textnormal{alg}_0 (L_\mathsf{k} (E))$ and the number of singular vertices of $E$, often denoted $|E^0_\textnormal{sing}|$.  (Recall that a vertex is called \emph{singular} if it either emits no edges or infinitely many edges.)  There are a few interesting things to note here: First, the ``sign of the determinant" obstruction disappears in this situation and, unlike the finite graph case, we know the pair $(K^\textnormal{alg}_0 (L_\mathsf{k} (E)), |E^0_\textnormal{sing}|)$ is a complete Morita equivalence invariant with no pieces that are possibly extraneous.  Second, the group $K^\textnormal{alg}_0 (L_\mathsf{k} (E))$ can be easily calculated and the number of singular vertices can be easily determined from $E$, so the entire invariant can be computed in a tractable way.  Also, as in the finite graph situation, the field $\mathsf{k}$ does not play a role in the invariant.

While the result of \cite[Theorem~7.4]{rt} provides an easily computable complete Morita equivalence invariant for certain purely infinite simple Leavitt path algebras, there is one aspect of the classification that is not completely satisfying: The number of singular vertices is a property of the graph $E$, and not an algebraic property of $L_\mathsf{k}(E)$.  This means that the invariant is described (and computed) from the way $L_\mathsf{k}(E)$ is being presented, not from an intrinsic algebraic property of $L_\mathsf{k}(E)$.  Consequently, if one wants to generalize the classification to include purely infinite simple algebras that are not constructed from graphs, it is unclear what the invariant should be and what one should use in place of the number of singular vertices.  Thus one may ask the following two questions: Can the invariant be reformulated in terms of the $K$-groups?  If so, will the algebraic $K_0$-group and $K_1$-group suffice (as with graph $C^*$-algebras), or will one need to include the higher algebraic $K$-groups?

These questions were partially answered in \cite{rt}, and interestingly it is found that it depends on the underlying field.  It was shown that if $\mathsf{k}$ is a field with ``no free quotients" (see \cite[Definition~6.1 and Definition~6.9]{rt}), then the pair of algebraic $K$-groups $K^\textnormal{alg}_0 (L_\mathsf{k} (E))$ and $K^\textnormal{alg}_1 (L_\mathsf{k} (E))$ provides a complete Morita equivalence invariant for $L_\mathsf{k}(E)$ \cite[Theorem~8.6]{rt}.  The class of fields with no free quotients includes such fields as $\R$, $\C$, all finite fields, and all algebraically closed fields; so this result applies in a number of situations.  In this case only the algebraic $K_0$-group and $K_1$-group are needed (in analogy with the graph $C^*$-algebra case), and moreover, both $K^\textnormal{alg}_0 (L_\mathsf{k} (E))$ and $K^\textnormal{alg}_1 (L_\mathsf{k} (E))$ can be easily calculated for any graph $E$ and field $\mathsf{k}$.  Interestingly, the field $\mathsf{k}$ comes up in the computation of $K^\textnormal{alg}_1 (L_\mathsf{k} (E))$ so that, unlike our prior situations, $\mathsf{k}$ does play a role in the invariant.

Although many fields are fields with no free quotients, the field of rational numbers $\Q$ is an example of a field that is not.  Moreover, \cite[Example~11.2]{rt} gives examples of countable graphs $E$ and $F$, each having a finite number of vertices and infinite number of edges and with purely infinite simple Leavitt path algebras, such that $K^\textnormal{alg}_0 (L_\Q(E)) \cong K^\textnormal{alg}_0 (L_\Q (F))$ and $K^\textnormal{alg}_1 (L_\Q (E)) \cong K^\textnormal{alg}_1 (L_\Q (F))$, but $L_\Q (E)$ is not Morita equivalent to $L_\Q (F)$.  This shows that for  the algebraic $K_0$-group and $K_1$-group to provide a complete Morita equivalence invariant requires a hypothesis on the underling field, and this pair of $K$-groups does not determine the Morita equivalence class of $L_\mathsf{k} (E)$ in general.

Given the findings of \cite[Example~11.2]{rt}, it is natural to ask if the problems encountered could be avoided by including additional algebraic $K$-groups in the invariant.  Specifically, one may ask the following:

$ $

\noindent \textbf{Question~1:} Can we determine classes of fields with the property that some collection of algebraic $K$-groups provides a complete Morita equivalence invariant for purely infinite simple Leavitt path algebras of the form $L_\mathsf{k} (E)$, where $\mathsf{k}$ is a field from this class and $E$ is a graph with a finite number of vertices and an infinite number of edges?

$ $

\noindent The result of \cite[Theorem~8.6]{rt} shows that for fields with no free quotients the $K_0$-group and $K_1$-group suffice.  But what about other classes?  In particular, can some collection of algebraic $K$-groups be used to classify such Leavitt path algebras when the field is $\Q$?

$ $

If higher algebraic $K$-groups are needed in the invariant, then this also naturally raises the following question:

$ $

\noindent \textbf{Question~2:}  Can we compute $K_n^\textnormal{alg}(L_\mathsf{k} (E))$ for $n \in \Z$?

$ $

\noindent Explicit formulas exist for $K_0^\textnormal{alg}(L_\mathsf{k} (E))$ and $K_1^\textnormal{alg}(L_\mathsf{k} (E))$, but there are currently no explicit formulas for other algebraic $K$-groups of Leavitt path algebras.  Even if such formulas could be given in particular situations (e.g., under hypotheses on the field, or the graph, or the algebra) it would improve the current state of affairs.

The work in this paper is motivated by Question~1 and Question~2 above, as well as a desire to better understand the phenomenon encountered in \cite[Example~11.2]{rt} and find a complete Morita equivalence invariant for when the underlying field is $\Q$.

In this paper we do the following: In Section~2 we review definitions and establish notation. In Section~3 we extend the long exact sequence of \cite{abc}, which relates the $K$-groups of a Leavitt path algebra to the $K$-groups of its underlying field, to include all countable graphs (and, in particular, graphs that have infinite emitters).  We then use this long exact sequence to determine explicit formulas for $K^\textnormal{alg}_n(L_\mathsf{k}(E))$ for $n \leq 1$ when $E$ is any graph and $\mathsf{k}$ is any field.  In Section~4 we give explicit formulas for all algebraic $K$-groups of Leavitt path algebras over finite fields (see Theorem~\ref{LPA-K-theory-finite-field}).  In Section~5 we give an explicit formula for $K^\textnormal{alg}_n(L_\mathsf{k}(E))$ under the hypothesis that either $K^\textnormal{alg}_n(k)$ is divisible or $K^\textnormal{alg}_{n-1}(k)$ is free abelian (see Theorem~\ref{algclosed}).  In particular, algebraically closed fields satisfy this hypothesis, and we are able to thereby give explicit formulas for all algebraic $K$-groups of a Leavitt path algebra over an algebraically closed field.  In Section~6 we examine the rank and corank for abelian groups, and observe that a field $\mathsf{k}$ has no free quotients if and only if $\corank K_1^\textnormal{alg} (\mathsf{k}) = 0$.  In Section~7 we define size functions as generalizations of corank and exact size functions as generalizations of rank.  In Section~8 we consider Question~1 above, and we show that if $\mathsf{k}$ is a field and $E$ is a countable graph with a finite number of vertices and an infinite number of edges for which $L_\mathsf{k} (E)$ is purely infinite and simple, then provided one of the following conditions holds:
\begin{itemize}
\item[(i)] there is a size function $F$ and a natural number $n \in \N$ such that $F(K_n^\textnormal{alg}(\mathsf{k})) = 0$, and $0 < F(K^\textnormal{alg}_{n-1}(k)) < \infty$; or
\item[(ii)] there is an exact size function $F$ and a natural number $n \in \N$ such that $F(K_n^\textnormal{alg}(\mathsf{k})) < \infty$, and $0 < F(K^\textnormal{alg}_{n-1}(k)) < \infty$
\end{itemize}
the pair consisting of $K_0^\textnormal{alg} ( L_\mathsf{k} (E))$ and $K_n^\textnormal{alg} ( L_\mathsf{k} (E))$ is a complete Morita equivalence invariant for $L_\mathsf{k} (E)$ (see Corollary~\ref{K0-Kn-1-Kn-exact-Mor-Eq-cor} and Corollary~\ref{K0-Kn-1-Kn-size-fct-Mor-Eq-cor}).  The result of \cite{rt}, which says that the algebraic $K_0$-group and $K_1$-group provide a complete invariant over fields with no free quotients, is a special case of this result.  In addition, this result implies (see Theorem~\ref{number-field-sing-thm}) that if $\mathsf{k}$ is a number field (i.e., a finite field extension of $\Q$), then the pair consisting of $K_0^\textnormal{alg} ( L_\mathsf{k} (E))$ and $K_6^\textnormal{alg} ( L_\mathsf{k} (E))$ is a complete Morita equivalence invariant.  In particular, since $\Q$ itself is a number field, this implies that the pair consisting of $K_0^\textnormal{alg} ( L_\Q (E))$ and $K_6^\textnormal{alg} ( L_\Q(E))$ is a complete Morita equivalence invariant for $L_\Q(E)$ when $L_\Q(E)$ is purely infinite simple  and $E$ has a finite number of vertices and an infinite number of edges.  

The authors thank Rasmus Bentmann for useful and productive conversations while this work was being performed.

\section{Preliminaries}

We write $\N := \{ 1, 2, \ldots \}$ for the natural numbers and $\Z^+ := \{ 0, 1, 2, \ldots \}$ for the non-negative integers.  We use the symbol $\mathsf{k}$ to denote a field.  If $R$ is a ring and $n \in \Z$, we let $K_n(R)$ denote the $n$\textsuperscript{th} algebraic $K$-group of $R$.  (Unlike in the introduction of this paper, we will drop the ``\textnormal{alg}" superscript on the $K$-groups, since we will only be considering algebraic $K$-theory.)

A \emph{graph} $(E^0, E^1, r, s)$ consists of a set $E^0$ of vertices, a  set $E^1$ of edges, and maps $r: E^1 \to E^0$ and $s: E^1 \to E^0$ that identify the range and source of each edge. What we call a graph is often referred to as a \emph{directed graph} or a \emph{quiver} in other literature.  A graph is \emph{countable} if both the sets $E^0$ and $E^1$ are countable, and a graph is \emph{finite} if both the sets $E^0$ and $E^1$ are finite. 

$ $

\noindent \textbf{Standing Assumption:} Throughout this paper all graphs are assumed to be countable.

$ $

Let $E=(E^0, E^1, r, s)$ be a graph and let $v \in E^0$ be a vertex of $E$. We say $v$ is a \emph{sink} if $s^{-1} (v) = \emptyset$, and we say $v$ is an \emph{infinite emitter} if $|s^{-1} (v)| = \infty$. A \emph{singular vertex} is a vertex that is either a sink or an infinite emitter, and we denote the set of singular vertices by $E^0 _{\sing}$. We let $E^0 _{\reg} : = E^0 \setminus E^0 _{\sing}$ and refer to an element of $E^0 _{\reg}$ as a \emph{regular vertex}. Note that a vertex $v \in E^0$ is a \emph{regular vertex} if and only if $0 < |s^{-1}(v) | < \infty$.

If $E$ is a graph, a \emph{path} is a sequence of edges $\alpha := e_1 e_2 \ldots e_n$ with $r(e_i) = s(e_{i+1})$ when $n \geq 2$ for $1 \leq i \leq n-1$. The \emph{length} of $\alpha$ is $n$, and we consider a single edge to be a path of length $1$, and a vertex to be a path of length $0$. The set of all paths in $E$ is denoted by $E^*$. A \emph{cycle} is a path $\alpha = e_1 e_2 \ldots e_n$ with $n \geq 1$ and $r( e_n ) = s( e_1)$.

If $\alpha = e_1e_2 \ldots e_n$ is a cycle, an \emph{exit} for $\alpha$ is an edge $f \in E^1$ such that $s(f) = s(e_i)$ and $f \neq e_i$ for some $i$.  A graph is said to satisfy \emph{Condition~(L)} if every cycle in the graph has an exit.  An \emph{infinite path} in a graph $E$ is an infinite  sequence of edges $\mu := e_1 e_2 \ldots$ with $r(e_i) = s(e_{i+1})$ for all $i \in \N$.  A graph $E$ is called \emph{cofinal} if whenever $\mu := e_1 e_2 \ldots$ is an infinite path in $E$ and $v \in E^0$, then there exists a finite path $\alpha \in E^*$ with $s(\alpha) = v$ and $r(\alpha) = s(e_i)$ for some $i \in \N$.

We say that a graph $E$ is \emph{simple} if $E$ satisfies all of the following three conditions:
\begin{itemize}
\item[(i)] $E$ is cofinal, 
\item[(ii)] $E$ satisfies Condition~(L), and 
\item[(iii)] whenever $v \in E^0$ and $w \in E^0_\textnormal{sing}$, there exists a path $\alpha \in E^*$ with $s(\alpha) = v$ and $r(\alpha) = w$.  
\end{itemize}
It is proven in \cite[Proposition~4.2]{rt} that the following are equivalent:
\begin{itemize}
\item[(1)] The graph $E$ is simple.
\item[(2)] The Leavitt path algebra $L_\mathsf{k} (E)$ is simple for any field $\mathsf{k}$.
\item[(3)] The graph $C^*$-algebra $C^*(E)$ is simple.
\end{itemize}

Given a graph $E$, the \emph{vertex matrix} $A_E$ is the $E^0 \times E^0$ matrix whose entries are given by 
$A_E(v,w) := | \{ e \in E^1 : s(e) = v \text{ and } r(e) = w \}|$.  We write $\infty$ for this value when $\{ e \in E^1 : s(e) = v \text{ and } r(e) = w \}$ is an infinite set, so $A_E$ takes values in $\Z^+ \cup \{ \infty \}$.

Let $E$ be a graph, and let $\mathsf{k}$ be a field. We let $(E^1)^*$
denote the set of formal symbols $\{ e^* : e \in E^1 \}$.  The \emph{Leavitt path
algebra of $E$ with coefficients in $\mathsf{k}$}, denoted $L_\mathsf{k}(E)$,  is
the free associative $\mathsf{k}$-algebra generated by a set $\{v : v \in
E^0 \}$ of pairwise orthogonal idempotents, together with a set
$\{e, e^* : e \in E^1\}$ of elements, modulo the ideal generated
by the following relations:
\begin{enumerate}
\item $s(e)e = er(e) =e$ for all $e \in E^1$
\item $r(e)e^* = e^* s(e) = e^*$ for all $e \in E^1$
\item $e^*f = \delta_{e,f} \, r(e)$ for all $e, f \in E^1$
\item $v = \displaystyle \sum_{\{e \in E^1 : s(e) = v \}} ee^*$ whenever $v \in E^0_\reg$.
\end{enumerate}

If $\alpha = e_1 \ldots e_n$ is a path of positive length, we define $\alpha^* = e_n^* \ldots e_1^*$.  One can show that $$L_\mathsf{k}(E) = \operatorname{span}_\mathsf{k} \{ \alpha \beta^* : \text{$\alpha$ and $\beta$ are paths in $E$ with $r(\alpha) = r(\beta)$} \}.$$ 
If $E$ is a graph and $\mathsf{k}$ is a field, the Leavitt path algebra $L_\mathsf{k}(E)$ is unital if and only if the vertex set $E^0$ is finite, in which case $1 = \sum_{v \in E^0} v$.

Suppose $E$ is a graph with a singular vertex $v_0 \in E^0 _\sing$. We \emph{add a tail} at $v_0$ by attaching a graph of the form
$$\xymatrix{
v_0 \ar[r] & v_1 \ar[r] & v_2 \ar[r] & v_3 \ar[r] & \cdots
}$$
to $E$ at $v_0$, and in the case $v_0$ is an infinite emitter, we list the edges of $s^{-1} (v_0)$ as $g_1, g_2, g_3, \ldots$, remove the edges in $s^{-1} (v_0)$, and for each $g_j$ we draw an edge $f_j$ from $v_{j-1}$ to $r (g_j)$. A \emph{desingularization} of $E$ is a graph $F$ obtained by adding a tail at every singular vertex of $E$.  It is shown in \cite[Theorem~5.2]{ap} that if $F$ is a desingularization of $E$, then for any field $\mathsf{k}$ the Leavitt path algebra $L_\mathsf{k} (E)$ is isomorphic to a full corner of the Leavitt path algebra $L_\mathsf{k} (F)$, and the algebras $L_\mathsf{k} (E)$ and $L_\mathsf{k} (F)$ are Morita equivalent.

\section{The Long Exact Sequence for $K$-groups of Leavitt Path Algebras}

In this section we extend the $K$-theory computation of \cite[Theorem~7.6]{abc} to Leavitt path algebras of graphs that may contain singular vertices.  For an abelian group $G$ and a set $S$ (possibly infinite), we denote the direct sum $\bigoplus_{ S } G$ by $G^{S}$.  Note that this differ from the notation used in \cite{abc} in which the authors denoted the direct sum by $G^{(S)}$.

\begin{theorem}\label{abc}
Let $E = (E^0, E^1, r, s)$ be a graph. Decompose the vertices of $E$ as $E^0 = E_{\reg} ^0 \sqcup E_{\sing} ^0$, and with respect to this decomposition write the vertex matrix of $E$ as
$$\left( \begin{matrix} B_E & C_E \\ * & * \end{matrix} \right)$$
where $B_E$ and $C_E$ have entries in $\Z ^+$ and each $*$ has entries in $\Z ^+ \cup \{ \infty \}$.
If $\mathsf{k}$ is a field, then for the Leavitt path algebra $L_\mathsf{k} (E)$ there is a long exact sequence 
$$ \xymatrix{ \cdots \ar[r] &
K_n (\mathsf{k})^{E^0 _{\reg}} \ar[r]^{\left( \begin{smallmatrix} B_E ^t - I \\ C_E ^t \end{smallmatrix} \right)} & K_n (\mathsf{k})^{E^0} \ar[r] & K_n (L_\mathsf{k}(E)) \ar[r] & K_{n-1} (\mathsf{k}) ^{E^0 _{\reg}} \ar[r]^-{\left( \begin{smallmatrix} B_E ^t - I \\ C_E ^t \end{smallmatrix} \right)} & \cdots\\} $$
for all $n \in \Z$.
\end{theorem}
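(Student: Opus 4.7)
The plan is to deduce the result from the Ara--Brustenga--Corti\~nas theorem \cite[Theorem~7.6]{abc} via desingularization. Let $F$ be a desingularization of $E$. Every vertex of $F$ is regular (the added tails turn sinks and infinite emitters of $E$ into regular vertices of $F$, while each tail vertex itself emits one or two edges), so $F^0_\reg = F^0$. By \cite[Theorem~5.2]{ap}, $L_\mathsf{k}(F)$ is Morita equivalent to $L_\mathsf{k}(E)$, hence $K_n(L_\mathsf{k}(F)) \cong K_n(L_\mathsf{k}(E))$ for all $n$. Applying \cite[Theorem~7.6]{abc} to $F$ then yields a long exact sequence
$$\cdots \to K_n(\mathsf{k})^{F^0} \xrightarrow{A_F^t - I} K_n(\mathsf{k})^{F^0} \to K_n(L_\mathsf{k}(E)) \to K_{n-1}(\mathsf{k})^{F^0} \to \cdots,$$
and the remaining task is to identify this naturally with the claimed sequence.

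Decompose $F^0 = V \sqcup W \sqcup T$ where $V = E^0_\reg$, $W = E^0_\sing$, and $T$ is the set of tail vertices added during desingularization. In block form with respect to this decomposition,
$$A_F^t - I = \begin{pmatrix} B_E^t - I & 0 & \alpha \\ C_E^t & -I & \beta \\ 0 & \gamma & S^t - I \end{pmatrix},$$
where $S$ is the shift $v_i^{(w)} \mapsto v_{i+1}^{(w)}$ along each tail, $\gamma$ sends $w \in W$ to $v_1^{(w)}$, and $\alpha, \beta$ encode the rerouted edges $f_j$ leaving infinite emitters of $E$ into $V$ and into $W$ respectively. The $V \times W$ and $W \times W$ blocks are $0$ and $-I$ respectively because in $F$ no vertex of $W$ emits to any vertex of $E^0$.

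The core step is to show that the zero-extension maps $V \hookrightarrow F^0$ (in degree $1$) and $E^0 \hookrightarrow F^0$ (in degree $0$) define a chain map from the two-term complex $K_n(\mathsf{k})^V \xrightarrow{\binom{B_E^t - I}{C_E^t}} K_n(\mathsf{k})^{E^0}$ into the two-term complex $K_n(\mathsf{k})^{F^0} \xrightarrow{A_F^t - I} K_n(\mathsf{k})^{F^0}$, and that this chain map is a quasi-isomorphism. Commutativity is immediate from $(A_F^t - I)(x, 0, 0) = ((B_E^t - I)x, C_E^t x, 0)$. Quasi-isomorphism reduces, via the long exact sequence attached to the quotient complex, to the acyclicity of this quotient, which amounts to showing that
$$K_n(\mathsf{k})^W \oplus K_n(\mathsf{k})^T \xrightarrow{\;(y, z) \,\mapsto\, \gamma y + (S^t - I)z\;} K_n(\mathsf{k})^T$$
is a bijection. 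Working one tail at a time, for any finitely supported sequence $(c_i)_{i \geq 1}$ the equations $y_w - z_1 = c_1$ and $z_{i-1} - z_i = c_i$ for $i \geq 2$ admit the unique finitely supported solution $y_w = \sum_{i} c_i$, $z_i = \sum_{j > i} c_j$, which simultaneously yields surjectivity and injectivity.

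The main obstacle is naturality. The quasi-isomorphism just described identifies the kernels and cokernels of the two differentials at every degree $n$, so the claimed sequence is at least abstractly a valid long exact sequence attached to $K_n(L_\mathsf{k}(E))$. Promoting this to a genuine identification with the ABC sequence for $F$ requires tracing through the construction of ABC's connecting homomorphisms and checking that the chain map above, composed with the Morita identification $K_n(L_\mathsf{k}(F)) \cong K_n(L_\mathsf{k}(E))$ coming from the full corner embedding of \cite[Theorem~5.2]{ap}, is compatible with those connecting maps. This is what one would expect from the functoriality of connecting morphisms under chain maps of two-term complexes, but verifying it carefully is where the bulk of the technical work lies.
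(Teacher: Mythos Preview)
Your approach matches the paper's: desingularize, apply the Ara--Brustenga--Corti\~nas sequence to $F$, and transport it back to $E$ via the inclusion of two-term complexes. The paper carries this out by explicitly defining maps $f_0 = \phi_*^{-1}\circ f\circ \iota_n$ and $g_0 = \pi^{\reg}_{n-1}\circ g\circ \phi_*$ and checking exactness at each of the three positions by hand; your quasi-isomorphism framing is a tidier packaging of the same computations (the paper's proof that $\im g \subseteq \im \iota^{\reg}_{n-1}$ is exactly the injectivity half of your acyclicity claim, and its Step~2 construction of the elements $b_i = \sum_j r_{i,j}$, $c_{i,k} = \sum_{j>k} r_{i,j}$ is the surjectivity half).

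Two corrections. First, your block form of $A_F^t - I$ is not quite right: in the desingularization of an infinite emitter $w$, the first rerouted edge $f_1$ has source $v_0 = w$ itself, so $w$ \emph{does} emit one edge into $E^0$, and the $(V,W)$ and $(W,W)$ blocks are not $0$ and $-I$ in general (the paper writes them as $X_1^t$ and $Y_1^t - I$). Fortunately this is harmless for your argument, since neither the first column (which gives the chain-map commutativity) nor the bottom row (which gives the quotient differential) involves those entries. Second, the naturality concern in your last paragraph is unnecessary for the theorem as stated: all that is claimed is the existence of \emph{some} long exact sequence with the indicated boundary maps $\left(\begin{smallmatrix} B_E^t - I \\ C_E^t \end{smallmatrix}\right)$. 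Once your quasi-isomorphism identifies kernels and cokernels at each $n$, the ABC sequence for $F$ yields short exact sequences $0 \to \coker \to K_n(L_\mathsf{k}(E)) \to \ker \to 0$, and these splice into the claimed long exact sequence without any further compatibility check on connecting maps. So you are actually done before your final paragraph begins.
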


\begin{proof}
If $E$ has no singular vertices, then the result holds by \cite[Theorem~7.6]{abc}. Otherwise, we will apply \cite[Theorem~7.6]{abc} to a desingularization of $E$ and show that the result holds for $E$ as well. Suppose $E$ has at least one singular vertex. List the singular vertices of $E$ as $E_{\sing} ^0 := \{v_1 ^0, v_2 ^0, v_3 ^0, \ldots \}.$  Note that $E_{\sing} ^0$ could be finite or countably infinite, but not empty.

Let $F$ be a desingularization of $E$. In forming $F$ from $E$, we add a tail to each singular vertex of $E$ and ``distribute" the edges of each infinite emitter along the vertices of the tail added to that infinite emitter.   For each singular vertex $v_i^0 \in E_{\sing} ^0$, let $\{v_i ^1, v_i ^2, v_i ^3 , \ldots \}$ denote the vertices of the tail added to $v_i^0$. (See  \cite[Section~2]{dt} for details.) If $A_F$ is the vertex matrix of $F$, we will now describe $A_F ^t - I$ following \cite[Lemma~2.3]{dt2}. For each $1 \leq i \leq |E_{\sing} ^0|$, let $D_i$ denote the $E_{\sing} ^0 \times \N$ matrix with $1$ in the $(i,1)$ position and zeros elsewhere:
$$ D_i = \left( \begin{matrix} 0 & 0 & 0 & \cdots \\ 
\vdots & \vdots & \vdots & \cdots \\
0 & 0 & 0 & \cdots \\
1 & 0 & 0 & \cdots \\
 0 & 0 & 0 & \cdots \\ 
\vdots & \vdots & \vdots & \ddots \end{matrix} \right).$$
Let $Z$ denote the $\N \times \N$ matrix with $-1$ in each entry of the diagonal and $1$ in each entry of the superdiagonal:
$$Z = \left( \begin{matrix} -1 & 1 & 0 & 0 & \cdots \\
0 & -1 & 1 & 0 & \cdots \\
0 & 0 & -1 & 1 & \cdots \\
\vdots & \vdots & \vdots & \vdots & \ddots \end{matrix} \right).$$
With respect to the decomposition $E_{\reg} ^0 \sqcup E_{\sing} ^0 \sqcup  \{v_1 ^1, v_1 ^2, v_1 ^3, \ldots \} \sqcup \{v_2 ^1, v_2 ^2, v_2 ^3, \ldots \}\sqcup \cdots$, we have
$$A_F ^t - I = \left( \begin{matrix} B_E ^t - I & X_1 ^t & X_2 ^t & X_3 ^t & \cdots \\
C_E^t  & Y_1 ^t - I & Y_2 ^t & Y_3 ^t & \cdots \\
0 & D_1 ^t & Z^t & 0 & \cdots \\
0 & D_2 ^t & 0 & Z^t & \cdots \\
\vdots & \vdots & \vdots & \vdots & \ddots \end{matrix} \right)$$
where each $X_i ^t$ and each $Y_i ^t$ is column-finite.

Since $E^0 \subseteq F^0$, we may define $\iota_n : K_n (\mathsf{k})^{E^0} \to K_n (\mathsf{k}) ^{F^0}$ and $\iota^{\reg} _n: K_n (\mathsf{k})^{E^0 _{\reg}} \to K_n (\mathsf{k}) ^{F^0}$ to be the inclusion maps.  If  $\textbf{x} \in K_n (\mathsf{k}) ^{E_{\reg} ^0}$, then
$$(A_F ^t - I)~ \iota^{\reg} _n (\textbf{x})=(A_F ^t- I)\begin{pmatrix} \mathbf{x} \\ \mathbf{0} \\ \begin{pmatrix} \textbf{0} \\ \textbf{0} \\ \vdots \end{pmatrix} \end{pmatrix} = \begin{pmatrix} (B_E ^t - I)\textbf{x} \\ C_E ^t \textbf{x} \\ \begin{pmatrix} \textbf{0} \\ \textbf{0} \\ \vdots \end{pmatrix} \end{pmatrix}= 
\iota_n \left( \left( \begin{matrix} B_E ^t - I \\ C_E ^t \end{matrix} \right )\textbf{x} \right).$$
So, the diagram 
\begin{equation} \label{inclusion-commute-eq}
\xymatrix{
K_n(\mathsf{k})^{E^0_{\reg}} \ar[d]_{\iota^{\reg} _n} \ar[r]^-{\left( \begin{smallmatrix} B_E ^t - I \\ C_E ^t \end{smallmatrix} \right)}
& K_n(\mathsf{k})^{E^0} \ar[d]_{\iota_n}\\ K_n(\mathsf{k})^{F^0} \ar[r]^-{A_F ^t - I}& K_n(\mathsf{k})^{F^0} }
\end{equation}
commutes for each $n \in \Z$.

By \cite[Theorem~5.2]{ap}, there is an embedding $\phi : L_\mathsf{k} (E) \to L_\mathsf{k}(F)$  onto a full corner of  $L_\mathsf{k} (F)$.  By \cite[Lemma~5.2]{rtideal} $\phi$ induces an isomorphism $\phi_* : K_n (L_\mathsf{k} (E)) \to K_n (L_\mathsf{k} (F))$ for each $n \in \Z$. Since $F$ has no singular vertices, \cite[Theorem~7.6]{abc} implies there exists a long exact sequence
\begin{equation} \label{desing-long-eq}
\xymatrix{
 \cdots \ar[r] 
& K_n (\mathsf{k})^{F ^0} \ar[r]^-{A_F ^t - I} 
& K_n (\mathsf{k})^{F^0} \ar[r]^-f
 & K_n (L_\mathsf{k} (F)) \ar[r]^-g
 & K_{n-1} (\mathsf{k}) ^{F ^0} \ar[r]^-{A_F ^t - I}
 & \cdots}
 \end{equation}
Combining \eqref{desing-long-eq} with \eqref{inclusion-commute-eq} and the isomorphisms $\phi_* : K_n (L_\mathsf{k} (E)) \to K_n (L_\mathsf{k} (F))$ we obtain a commutative diagram
$$\xymatrix{ \cdots  & K_n (\mathsf{k})^{E^0 _{\reg}} \ar[r]^-{\left( \begin{smallmatrix} B_E ^t - I \\ C_E ^t \end{smallmatrix} \right)} \ar[d]^{\iota^{\reg} _n} 
& K_n (\mathsf{k})^{E^0} \ar[d]^{\iota _n}
& K_n (L_\mathsf{k} (E)) \ar[d]^{\phi_*} 
& K_{n-1}(\mathsf{k})^{E^0_{\reg}} \ar[r]^-{\left( \begin{smallmatrix} B_E ^t - I \\ C_E ^t \end{smallmatrix} \right)} \ar[d]^{\iota^{\reg} _{n-1}} & \cdots\\
 \cdots \ar[r] 
& K_n (\mathsf{k})^{F ^0} \ar[r]^-{A_F ^t - I} 
& K_n (\mathsf{k})^{F^0} \ar[r]^-f
 & K_n (L_\mathsf{k} (F)) \ar[r]^-g
 & K_{n-1} (\mathsf{k}) ^{F ^0} \ar[r]^-{A_F ^t - I}
 & \cdots}$$
with the lower row exact.

Define $f_0 : K_n(\mathsf{k})^{E^0} \to K_n(L_\mathsf{k} (E))$ by $f_0 : = \phi_* ^{-1} \circ f \circ \iota_n$. Define $g_0 : K_n (L_\mathsf{k} (E)) \to K_{n-1} (\mathsf{k})^{E_{\reg} ^0}$ by $g_0 := \pi^{\reg} _{n-1} \circ g \circ \phi_*$, where $\pi^{\reg}_n : K_n (\mathsf{k})^{F ^0} \to K_n(\mathsf{k})^{E^0 _{\reg}}$ by $$\pi^{\reg}_n \begin{pmatrix} \textbf{x} \\ \textbf{y} \\ \begin{pmatrix} \textbf{z}_1 \\ \textbf{z}_2 \\ \vdots \end{pmatrix} \end{pmatrix} = \textbf{x}.$$
Altogether, we have the diagram
\begin{equation} \label{main} \xymatrix{ \cdots \ar[r] & K_n (\mathsf{k})^{E^0 _{\reg}} \ar[r]^-{\left( \begin{smallmatrix} B_E ^t - I \\ C_E ^t \end{smallmatrix} \right)} \ar[d]^{\iota^{\reg} _n} 
& K_n (\mathsf{k})^{E^0} \ar[d]^{\iota _n} \ar[r]^-{f_0} 
& K_n (L_\mathsf{k} (E)) \ar[d]^{\phi_*} \ar[r]^-{g_0}
& K_{n-1}(\mathsf{k})^{E^0_{\reg}} \ar[r]^-{\left( \begin{smallmatrix} B_E ^t - I \\ C_E ^t \end{smallmatrix} \right)} \ar[d]^{\iota^{\reg} _{n-1}} & \cdots\\
 \cdots \ar[r] 
& K_n (\mathsf{k})^{F ^0} \ar[r]^-{A_F ^t - I} 
& K_n (\mathsf{k})^{F^0} \ar[r]^-f
 & K_n (L_\mathsf{k} (F)) \ar[r]^-g
 & K_{n-1} (\mathsf{k}) ^{F ^0} \ar[r]^-{A_F ^t - I}
 & \cdots} \end{equation}
and will show that this diagram commutes and that the upper row is exact. To do so, we will frequently need:
\begin{equation} \label{incl}\im g \subseteq \im \iota^{\reg} _{n-1} ,\end{equation}
so we prove this now. Let $(\textbf{u}, \textbf{v}, (\textbf{w}_1, \textbf{w}_2, \ldots))^t \in \ker (A_F ^t - I)=\im g \subseteq K_{n-1} (\mathsf{k}) ^{F^0}$ written with respect to the decomposition $F^0 = E_{\reg} ^0 \sqcup E_{\sing} ^0 \sqcup  \{v_1 ^1, v_1 ^2, v_1 ^3, \ldots \} \sqcup \{v_2 ^1, v_2 ^2, v_2 ^3, \ldots \}\sqcup \cdots$. Then
$$
\begin{pmatrix} \mathbf{0} \\ \mathbf{0} \\ \begin{pmatrix} \mathbf{0} \\ \mathbf{0} \\ \vdots \end{pmatrix} \end{pmatrix}
=
(A_F ^t - I) \left(\begin{matrix} \mathbf{u} \\ \mathbf{v} \\ \begin{pmatrix} \mathbf{w}_1 \\ \mathbf{w}_2 \\ \vdots \end{pmatrix} \end{matrix}\right) 
=
\left(\begin{matrix}
(B_E ^t - I)\mathbf{u} + X_1 ^t \mathbf{v} + X_2 ^t \mathbf{w}_1 + X_3 ^t \mathbf{w}_2+ \cdots \\
C_E ^t \mathbf{u} + (Y_1 ^t - I)\mathbf{v} + Y_2 ^t \mathbf{w}_1+ Y_3 ^t \mathbf{w}_2+ \cdots \\
D_1 ^t \mathbf{v} + Z^t \mathbf{w}_1 \\
D_2 ^t \mathbf{v}+ Z^t \mathbf{w}_2 \\
D_3 ^t \mathbf{v}+ Z^t \mathbf{w}_3 \\
\vdots
\end{matrix}\right).
$$
We have $D_i ^t \mathbf{v} + Z\mathbf{w}_i = 0$ for all $1 \leq i \leq |E_{\sing} ^0|$. For a fixed $i \in \N$, define $\mathbf{v}: = (v_1, v_2, \ldots)$ and $\mathbf{w}_i : = (w_{i,1}, w_{i,2}, \ldots)$. Then $D_i ^t \mathbf{v} + Z^{t}\mathbf{w}_i = 0$ implies
$$\begin{pmatrix} v_i \\ 0 \\ 0 \\ \vdots \end{pmatrix} + \begin{pmatrix} -w_{i,1} \\ w_{i,1} - w_{i,2} \\ w_{i,2} - w_{i,3} \\ \vdots \end{pmatrix}=\begin{pmatrix} 0 \\ 0 \\ 0 \\ \vdots \end{pmatrix},$$ and we conclude that $v_i = w_{i,1} = w_{i,2} = w_{i,3} = \cdots$ for each $1 \leq i \leq |E_{\sing} ^0|$. Since $\mathbf{w}_i$ is in the direct sum $K_{n-1} (\mathsf{k})^{\{ v_i ^1, v_i ^2 , \ldots \}},$ the entries $w_{i,k}$ are eventually $0$ for each $i$, so $\mathbf{0} = \mathbf{v} = \mathbf{w}_1 = \mathbf{w}_2 = \cdots$. This implies $\im g = \ker(A_F ^t -I) \subseteq \im \iota_n ^{\reg}$.

Now we check commutativity of \eqref{main}. We already have that $(A_F ^t - I) \circ \iota^{\reg} _n = \iota_n \circ \left( \begin{smallmatrix} B_E ^t - I \\ C_E ^t \end{smallmatrix} \right )$ from \eqref{inclusion-commute-eq}. From the definition of $f_0$ we have $\phi_* \circ f_0 = \phi_* \circ \phi_* ^{-1} \circ f \circ \iota_n = f \circ \iota_n$. Finally, $\iota_{n-1} ^{\reg} \circ g_0 = \iota_{n-1} ^{\reg} \circ \pi _{n-1} ^{\reg} \circ g \circ \phi_* = g \circ \phi_*$ from the definition of $g_0$ and by \eqref{incl}. Hence \eqref{main} is commutative.

Next, we verify exactness at $K_n (\mathsf{k})^{E^0}$, $K_n (L_{\mathsf{k}} (E))$, and $K_{n-1} (\mathsf{k})^{E^0_{\mathrm{reg}}}.$

\noindent \underline{Step 1}: $\im\left( \begin{smallmatrix} B_E ^t - I \\ C_E ^t \end{smallmatrix} \right) = \ker f_0$. Because of the commutativity and exactness of \eqref{desing-long-eq}:
$$ f_0 \circ \left( \begin{smallmatrix} B_E ^t - I \\ C_E ^t \end{smallmatrix} \right) (\mathbf{x}) 
 =\phi_* ^{-1} \circ f \circ \iota_n \circ \left( \begin{smallmatrix} B_E ^t - I \\ C_E ^t \end{smallmatrix} \right) (\mathbf{x})
=\phi_* ^{-1} \circ f \circ (A_F ^t - I) \circ \iota^{\reg} _n (\mathbf{x}) 
= 0
$$
for all $\mathbf{x} \in K_n(\mathsf{k})^{E^0_{\reg}}$. Hence $\im\left( \begin{smallmatrix} B_E ^t - I \\ C_E ^t \end{smallmatrix} \right) \subseteq \ker f_0$. For the reverse inclusion, if $(\mathbf{x}, \mathbf{y})^t \in \ker f_0$, then $ \phi_* ^{-1} \circ f \circ \iota_n ( (\mathbf{x}, \mathbf{y})^t) = 0$, and since $\phi_*$ is an isomorphism, $\iota_n ((\mathbf{x}, \mathbf{y})^t) \in \ker f = \im(A_F ^t - I)$. Let $(\mathbf{u}, \mathbf{v}, (\mathbf{w}_1, \mathbf{w}_2, \ldots))^t \in K_n (\mathsf{k}) ^{F^0}$ be an element that $A_F ^t - I$ maps to $\iota_n ((\mathbf{x}, \mathbf{y})^t)$, written with respect to the decomposition $F^0 = E_{\reg} ^0 \sqcup E_{\sing} ^0 \sqcup  \{v_1 ^1, v_1 ^2, v_1 ^3, \ldots \} \sqcup \{v_2 ^1, v_2 ^2, v_2 ^3, \ldots \}\sqcup \cdots$. Then
$$\left(\begin{matrix} \mathbf{x} \\ \mathbf{y} \\ \begin{pmatrix} \mathbf{0} \\ \mathbf{0} \\ \vdots \end{pmatrix} \end{matrix}\right)= \iota_n \begin{pmatrix} \mathbf{x} \\ \mathbf{y} \end{pmatrix} =
(A_F ^t - I) \left(\begin{matrix} \mathbf{u} \\ \mathbf{v} \\ \begin{pmatrix} \mathbf{w}_1 \\ \mathbf{w}_2 \\ \vdots \end{pmatrix} \end{matrix}\right) =
\left(\begin{matrix}
(B_E ^t - I)\mathbf{u} + X_1 ^t \mathbf{v} + X_2 ^t \mathbf{w}_1 + \cdots \\
C_E ^t \mathbf{u} + (Y_1 ^t - I)\mathbf{v} + Y_2 ^t \mathbf{w}_1+ \cdots \\
D_1 ^t \mathbf{v} + Z^t \mathbf{w}_1 \\
D_2 ^t \mathbf{v}+ Z^t \mathbf{w}_2 \\
D_3 ^t \mathbf{v}+ Z^t \mathbf{w}_3 \\
\vdots
\end{matrix}\right).$$
Using the computations following \eqref{incl}, we obtain $\mathbf{0} = \mathbf{v} = \mathbf{w}_1 = \mathbf{w}_2 = \cdots$ and so
$$\left(\begin{matrix} \mathbf{x} \\ \mathbf{y} \\ \begin{pmatrix} \mathbf{0} \\ \mathbf{0} \\ \vdots \end{pmatrix} \end{matrix}\right)=
\left(\begin{matrix}
(B_E ^t - I)\mathbf{u} \\
C_E ^t \mathbf{u} \\
\left( \begin{matrix} \mathbf{0} \\
\mathbf{0} \\
\vdots \end{matrix} \right)
\end{matrix}\right).$$
Thus, $(\mathbf{x}, \mathbf{y})^t = \begin{pmatrix} B_E ^t - I \\ C_E ^t \end{pmatrix} \mathbf{u}$, so $(\mathbf{x}, \mathbf{y})^t \in \im \begin{pmatrix} B_E ^t - I \\ C_E ^t \end{pmatrix}$ and $\ker f_0 \subseteq \im\left(\begin{matrix} B_E ^t - I\\ C_E ^t \end{matrix}\right)$.

\smallskip

\noindent \underline{Step 2}: $\im f_0 = \ker g_0.$ Since $\im f= \ker g$,
$$g_0 \circ f_0 = \pi_{n-1} ^\reg \circ g \circ \phi_* \circ \phi_* ^{-1} \circ f \circ \iota_n = \pi_{n-1} ^\reg \circ g \circ f \circ \iota_n = 0,$$
implying that $\im f_0 \subseteq \ker g_0$. For the reverse inclusion, let $x \in \ker g_0$. Then $\mathbf{0} = g_0 (x) = \pi_{n-1} ^\reg \circ g \circ \phi_* (x),$ and by \eqref{incl} this implies $\phi_* (x) \in \ker g = \im f$. Let 
$$\left( \begin{matrix} \mathbf{p} \\ 
\mathbf{q} \\ 
\left(\begin{matrix}\mathbf{r}_1 \\ \mathbf{r}_2 \\ \vdots \end{matrix} \right) \end{matrix} \right)
\in K_n (\mathsf{k})^{F^0}$$
be an element that $f$ maps to $\phi_* (x),$ written with respect to the decomposition $F^0 = E_{\reg} ^0 \sqcup E_{\sing} ^0 \sqcup  \{v_1 ^1, v_1 ^2, v_1 ^3, \ldots \} \sqcup \{v_2 ^1, v_2 ^2, v_2 ^3, \ldots \}\sqcup \cdots$. For each $1 \leq i \leq |E_{\sing} ^0|$, write
$$\mathbf{r}_i = \left( \begin{matrix} r_{i,1} \\ r_{i,2} \\ r_{i,3} \\ \vdots \end{matrix} \right)$$
and define
$$b_i := \sum_{j=1} ^{\infty} r_{i,j} \quad \qquad \text{ and } \quad \qquad c_{i,k} := \sum_{j=k+1} ^{\infty} r_{i,j}.$$
Since $\mathbf{r}_i$ is in the direct sum $K_n (\mathsf{k})^{\{v_i ^1, v_i ^2, \ldots \}}$ for each $i$, each of the above sums has only finitely many nonzero terms. Also, since $(\mathbf{r}_1, \mathbf{r}_2, \ldots )^t$ is in the direct sum $K_n (\mathsf{k})^{F^0 \setminus E^0}$, we have that $\mathbf{r}_i = 0$ eventually, so
$$\mathbf{b} : = \left( \begin{matrix} b_1 \\ b_2 \\ \vdots \end{matrix} \right) \in K_n(\mathsf{k})^{E_{\sing} ^0} \quad \text { and } \quad \mathbf{c}:=\left( \begin{matrix} \mathbf{c}_1 \\ \mathbf{c}_2 \\ \vdots \end{matrix} \right) \in K_n(\mathsf{k}) ^{F^0 \setminus E^0} \text{ where } \mathbf{c}_i : = \left( \begin{matrix} c_{i,1} \\ c_{i,2} \\ \vdots \end{matrix} \right).$$
Now set
$$\mathbf{u}:= - X_1 ^t \mathbf{b} - X_2 ^t \mathbf{c}_1 - X_3 ^t \mathbf{c}_2 - \cdots$$
and
$$\mathbf{v}:= -(Y_1 ^t - I) \mathbf{b} - Y_2 ^t \mathbf{c}_1 - Y_3 ^t \mathbf{c}_2 - \cdots,$$
which are finite sums since $(\mathbf{c}_1, \mathbf{c}_2, \ldots)^t$ is in the direct sum.
This gives
$$(A_F ^t - I) \left( \begin{matrix} \mathbf{0} \\ \mathbf{b} \\ \left( \begin{matrix}  \mathbf{c}_1 \\ \mathbf{c}_2 \\ \vdots \end{matrix} \right) \end{matrix} \right) = \left( \begin{matrix} -\mathbf{u} \\ -\mathbf{v} \\ \left( \begin{matrix} \mathbf{r}_1 \\ \mathbf{r}_2 \\ \vdots \end{matrix} \right) \end{matrix} \right).$$
Thus, 
$$\left( \begin{matrix} -\mathbf{u} \\ -\mathbf{v} \\ \left( \begin{matrix} \mathbf{r}_1 \\ \mathbf{r}_2 \\ \vdots \end{matrix} \right) \end{matrix} \right) \in \im(A_F ^t - I) = \ker f,$$
and
$$f_0 \left( \begin{matrix} \mathbf{p} + \mathbf{u} \\ \mathbf{q} + \mathbf{v} \end{matrix} \right)= \phi_* ^{-1} \circ f \circ \iota_n \left( \begin{matrix} \mathbf{p} + \mathbf{u} \\ \mathbf{q} + \mathbf{v} \end{matrix} \right)
= \phi_* ^{-1} \circ f  \left( \begin{matrix} \mathbf{p} + \mathbf{u} \\ \mathbf{q} + \mathbf{v} \\ \begin{pmatrix} \mathbf{0} \\ \mathbf{0} \\ \vdots \end{pmatrix} \end{matrix}  \right)
= \phi_* ^{-1} \circ f \left( \begin{matrix} \mathbf{p} \\ \mathbf{q} \\ \left( \begin{matrix} \mathbf{r}_1 \\ \mathbf{r}_2 \\ \vdots \end{matrix} \right) \end{matrix} \right)
= x,$$
so $x \in \im f_0$.

\noindent \underline{Step 3}: $\im g_0 = \ker \left( \begin{smallmatrix} B_E ^t - I \\ C_E ^t \end{smallmatrix} \right).$ First we will show $\left( \begin{smallmatrix} B_E ^t - I \\ C_E ^t \end{smallmatrix} \right) \circ~g_0 = 0$, which happens if and only if $\iota_n \circ \left( \begin{smallmatrix} B_E ^t - I \\ C_E ^t \end{smallmatrix} \right) \circ g_0 = 0$ because $\iota_n$ is injective. Since
\begin{align*}\iota_n \circ \left( \begin{smallmatrix} B_E ^t - I \\ C_E ^t \end{smallmatrix} \right) \circ g_0 & =
(A_F ^t - I) \circ \iota_n ^\reg \circ g_0 \qquad \qquad \quad \,\,\,\, \text{(by commutativity of \eqref{inclusion-commute-eq})} \\
& = (A_F ^t - I) \circ \iota_n ^\reg \circ \pi_{n-1} ^\reg \circ g \circ \phi_*  \quad \text{(by definition of $g_0$)}\\
&=(A_F ^t -I) \circ g \circ \phi_*  \qquad \qquad \qquad \, \, \text{(by \eqref{incl})}\\
& = 0 \circ \phi_* = 0 \qquad \qquad \qquad \qquad \quad \,\, \text{(by exactness of \eqref{desing-long-eq})}, \end{align*}
it follows  $\im g_0 \subseteq \ker \left( \begin{smallmatrix} B_E ^t - I \\ C_E ^t \end{smallmatrix} \right).$ For the reverse inclusion, let $\mathbf{u} \in \ker\left( \begin{smallmatrix} B_E ^t - I \\ C_E ^t \end{smallmatrix} \right).$ By the commutativity of \eqref{inclusion-commute-eq} and exactness of \eqref{desing-long-eq}, we have $\iota^{\reg} _{n-1} (\mathbf{u}) \in \ker(A_F ^t - I) = \im g$. Let $a \in K_n (L_{\mathsf{k}} (F))$ be an element that $g$ maps to $\iota^{\reg} _{n-1} (\mathbf{u})$. Then 
$$g_0 (\phi_* ^{-1} (a)) = \pi_{n-1} ^\reg \circ g \circ \phi_* (\phi_* ^{-1} (a)) = \pi_{n-1} ^\reg \circ g(a) = \pi_{n-1} ^\reg \circ \iota^{\reg} _{n-1} (\mathbf{u})=\mathbf{u},$$ so $\mathbf{u} \in \im g_0.$ Thus $\ker \left( \begin{smallmatrix} B_E ^t - I \\ C_E ^t \end{smallmatrix} \right) \subseteq \im g_0$, and \eqref{main} is exact. \end{proof}

The following lemma regarding negative $K$-theory is included for the convenience of the reader. The proof requires putting together a few results in \cite{rosen}.
\begin{lemma}\label{negk}
If $R$ is a left regular ring and $n \geq 1$, then $K_{-n} (R) = \{ 0 \}$. In particular, if $\mathsf{k}$ is a field and $n \geq 1$, then $K_{-n} (\mathsf{k}) = \{0\}$.
\end{lemma}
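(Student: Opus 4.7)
The plan is to prove this by induction on $n$, using the Bass fundamental theorem for algebraic $K$-theory together with the stability of left regularity under polynomial extensions. Three ingredients from \cite{rosen} are needed: (a) the Bass fundamental exact sequence, which for every $n$ takes the form
\begin{equation*}
0 \to K_n(R) \to K_n(R[t]) \oplus K_n(R[t^{-1}]) \to K_n(R[t, t^{-1}]) \to K_{n-1}(R) \to 0
\end{equation*}
and which, for $n \leq 0$, serves as the defining relation for $K_{n-1}(R)$ via a cokernel; (b) if $R$ is left regular, then so are $R[t]$, $R[t^{-1}]$, and $R[t, t^{-1}]$ (a form of the Hilbert syzygy theorem); and (c) if $R$ is left regular, then the canonical inclusion induces an isomorphism $K_0(R) \to K_0(R[t])$, and similarly with $R[t^{-1}]$ or $R[t, t^{-1}]$ in place of $R[t]$ (the latter following from applying (b) and (c) iteratively).

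For the base case $n = 1$, I would substitute (b) and (c) into the fundamental sequence at $n = 0$ to obtain
\begin{equation*}
0 \to K_0(R) \to K_0(R) \oplus K_0(R) \to K_0(R) \to K_{-1}(R) \to 0,
\end{equation*}
where the middle map is the difference of the two canonical identifications, hence surjective. Exactness then forces $K_{-1}(R) = 0$.

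For the inductive step, suppose $K_{-m}(S) = 0$ for every left regular ring $S$ and every $1 \leq m \leq n$. Since $R[t]$, $R[t^{-1}]$, and $R[t, t^{-1}]$ are all left regular by (b), the inductive hypothesis gives $K_{-n}(R[t]) = K_{-n}(R[t^{-1}]) = K_{-n}(R[t, t^{-1}]) = 0$, and the Bass cokernel definition
\begin{equation*}
K_{-n-1}(R) = \coker\bigl(K_{-n}(R[t]) \oplus K_{-n}(R[t^{-1}]) \to K_{-n}(R[t, t^{-1}])\bigr)
\end{equation*}
yields $K_{-n-1}(R) = 0$. The ``in particular'' statement is then immediate, since every field is (trivially) left regular.

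The main obstacle is not mathematical but bibliographic: locating each of (a), (b), and (c) within \cite{rosen} and checking that the statements there are formulated (or immediately extend) to the noncommutative setting of left regular rings, rather than only to commutative regular rings. Once this is verified, the induction itself is purely formal.
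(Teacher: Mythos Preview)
Your proposal is correct and is essentially the argument the paper invokes by citation: the paper's proof simply points to \cite[Definition~3.3.1, Corollary~3.2.20, Theorem~3.2.3, Corollary~3.2.13]{rosen}, which together amount to exactly your ingredients (a)--(c) and the induction you describe. You have spelled out explicitly what the paper leaves to the references, and your own caveat about the bibliographic obstacle is the only genuine gap --- one the paper closes by supplying the precise citations.
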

\begin{proof}
The lemma is stated as fact in \cite[Definition~3.3.1]{rosen}, referring to \cite[Corollary~3.2.20]{rosen} and \cite[Theorem~3.2.3]{rosen}, but \cite[Corollary~3.2.13]{rosen} is also needed. For the last claim, note that a field is trivially a left regular ring.
\end{proof}

Combining Theorem~\ref{abc} with facts about the algebraic $K$-theory of fields, we obtain the following proposition,  which is well known in some special cases (e.g., \cite[Proposition~5.1]{rt}).

\begin{remark}
Let $G$ be a group and let $A$ be an integer matrix.  When $G$ is written additively, matrix multiplication $A : G^n \to G^n$ is done in the usual way, adding integer sums of elements of $G$.  When $G$ is written multiplicatively (such as in Proposition~\ref{3prop}(iii) when we identify $K_1(\mathsf{k})$ with $\mathsf{k}^\times$) we must take products of integer powers of $G$; for example, if we have $\left( \begin{smallmatrix} a & b \\ c & d \end{smallmatrix} \right) : (\mathsf{k}^\times ) ^2 \to (\mathsf{k}^\times)^2$, then this map takes $\left( \begin{smallmatrix} x \\ y \end{smallmatrix} \right) \in (\mathsf{k}^\times)^2$ to $\left( \begin{smallmatrix} x^a b^y \\ x^c y^d \end{smallmatrix} \right) \in (\mathsf{k}^\times)^2$.
\end{remark}

\begin{proposition} \label{3prop}
Let $E$ be a graph, let $\mathsf{k}$ be a field, and consider the Leavitt path algebra $L_\mathsf{k} (E)$. 
\begin{itemize}
\item[(i)]If $n \leq -1$, then $K_n (L_\mathsf{k} (E)) = \{0\}.$
\item[(ii)]$K_0 (L_\mathsf{k} (E)) \cong \coker \left( \left( \begin{matrix} B_E ^t - I \\ C_E ^t \end{matrix} \right): \Z^{E^0 _{\reg}} \to \Z^{E^0}\right).$
\item[(iii)]$K_1 (L_\mathsf{k} (E))$ is isomorphic to a direct sum: 
\begin{align*}K_1 (L_\mathsf{k} (E))  \cong \coker & \left( \left( \begin{matrix} B_E ^t - I \\ C_E ^t \end{matrix} \right): (\mathsf{k}^{\times})^{E^0 _{\reg}} \to (\mathsf{k}^{\times})^{E^0} \right) \\ & \oplus~
\ker \left( \left( \begin{matrix} B_E ^t - I \\ C_E ^t \end{matrix} \right): \Z^{E^0 _{\reg}} \to  \Z^{E^0} \right). \end{align*}
\end{itemize}
\end{proposition}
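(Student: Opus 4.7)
The plan is to deduce all three parts of Proposition~\ref{3prop} from the long exact sequence of Theorem~\ref{abc}, combined with the basic facts $K_0(\mathsf{k}) \cong \Z$, $K_1(\mathsf{k}) \cong \mathsf{k}^\times$, and $K_{-n}(\mathsf{k}) = 0$ for $n \geq 1$ (Lemma~\ref{negk}). The point is that direct sums of the zero group are zero, so the long exact sequence truncates or simplifies in the relevant degrees.

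For part (i), if $n \leq -1$ then both $n$ and $n-1$ are $\leq -1$, so Lemma~\ref{negk} gives $K_n(\mathsf{k})^{E^0} = 0$ and $K_{n-1}(\mathsf{k})^{E^0_{\reg}} = 0$. The segment
\[ K_n(\mathsf{k})^{E^0} \longrightarrow K_n(L_\mathsf{k}(E)) \longrightarrow K_{n-1}(\mathsf{k})^{E^0_{\reg}} \]
of the long exact sequence then forces $K_n(L_\mathsf{k}(E)) = 0$.

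For part (ii), I would look at the tail
\[ K_0(\mathsf{k})^{E^0_{\reg}} \xrightarrow{\left(\begin{smallmatrix} B_E^t - I \\ C_E^t \end{smallmatrix}\right)} K_0(\mathsf{k})^{E^0} \longrightarrow K_0(L_\mathsf{k}(E)) \longrightarrow K_{-1}(\mathsf{k})^{E^0_{\reg}}. \]
Since $K_{-1}(\mathsf{k}) = 0$ by Lemma~\ref{negk} and $K_0(\mathsf{k}) \cong \Z$, exactness identifies $K_0(L_\mathsf{k}(E))$ with the cokernel of $\left(\begin{smallmatrix} B_E^t - I \\ C_E^t \end{smallmatrix}\right) : \Z^{E^0_{\reg}} \to \Z^{E^0}$, as claimed.

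For part (iii), I would extract from the long exact sequence the six-term piece running from $K_1(\mathsf{k})^{E^0_{\reg}}$ to $K_0(\mathsf{k})^{E^0}$, which, using $K_1(\mathsf{k}) \cong \mathsf{k}^\times$ and $K_0(\mathsf{k}) \cong \Z$, yields a short exact sequence
\[ 0 \longrightarrow \coker\!\left(\left(\begin{smallmatrix} B_E^t - I \\ C_E^t \end{smallmatrix}\right)\!:\,(\mathsf{k}^\times)^{E^0_{\reg}} \to (\mathsf{k}^\times)^{E^0}\right) \longrightarrow K_1(L_\mathsf{k}(E)) \longrightarrow \ker\!\left(\left(\begin{smallmatrix} B_E^t - I \\ C_E^t \end{smallmatrix}\right)\!:\,\Z^{E^0_{\reg}} \to \Z^{E^0}\right) \longrightarrow 0. \]
The key observation making this a direct sum and not merely an extension is that the rightmost group is a subgroup of the free abelian group $\Z^{E^0_{\reg}}$ and hence free abelian (subgroups of free abelian groups are free abelian, even when $E^0_{\reg}$ is countably infinite, by the Specker/Nielsen--Schreier-style theorem for countably generated free abelian groups). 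Being free, it is projective in $\mathbf{Ab}$, so the short exact sequence splits and the direct sum decomposition follows. The only subtle point, and the one I would double-check, is this freeness claim for the kernel when $E^0_{\reg}$ is infinite; the rest is bookkeeping.
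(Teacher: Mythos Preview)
Your proof is correct and follows essentially the same route as the paper: both deduce all three parts from the long exact sequence of Theorem~\ref{abc} together with $K_0(\mathsf{k})\cong\Z$, $K_1(\mathsf{k})\cong\mathsf{k}^\times$, and Lemma~\ref{negk}, and both obtain the splitting in (iii) by observing that the kernel, as a subgroup of the free abelian group $\Z^{E^0_{\reg}}$, is itself free abelian. Your extra care about the infinite-rank case is well placed but not a point of divergence.
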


\begin{proof}
By Lemma~\ref{negk}, if $\mathsf{k}$ is any field and $n \leq -1$, then $K_n (\mathsf{k}) = \{0\}$. Combining this with Theorem~\ref{abc}, item (i) follows with an exactness argument. By \cite[Example~1.1.6]{rosen}, if $\mathsf{k}$ is any field, then $K_0 (\mathsf{k}) = \Z$.  Thus, item (ii) follows with the exact sequence in Theorem~\ref{abc} and item (i).

By \cite[Example~III.1.1.2]{kbook} or by \cite[Proposition~2.2.2]{rosen}, if $\mathsf{k}$ is any field, then $K_1 (\mathsf{k}) = \mathsf{k}^{\times}$. So at position $n=1$, the exact sequence of Theorem~\ref{abc} takes the form
$$\xymatrix{ \cdots \ar[r]
& (\mathsf{k}^{\times})^{E^0 _{\reg}} \ar[r]^-{\left( \begin{smallmatrix} B_E ^t - I \\ C_E ^t \end{smallmatrix} \right)}
& (\mathsf{k}^{\times})^{E^0} \ar[r]
& K_1 (L_\mathsf{k} (E)) \ar[r]
& \Z ^{E^0 _{\reg}} \ar[r]^-{\left( \begin{smallmatrix} B_E ^t - I \\ C_E ^t \end{smallmatrix} \right)}
& \cdots \\ },$$
which induces the short exact sequence
$$\xymatrix{0 \ar[r] & \coker {\begin{pmatrix} B_E ^t - I \\ C_E ^t \end{pmatrix}} \ar[r] & K_1 (L_\mathsf{k} (E)) \ar[r] & \ker {\begin{pmatrix} B_E^t - I \\ C_E ^t \end{pmatrix}} \ar[r] & 0}.$$
Since $\ker \left( \begin{smallmatrix} B_E ^t - I \\ C_E ^t \end{smallmatrix} \right)$ is a subgroup of the free abelian group $\Z^{E^0 _\reg }$, it follows that $\ker \left( \begin{smallmatrix} B_E ^t - I \\ C_E ^t \end{smallmatrix} \right) $ is a free abelian group. Hence the short exact sequence splits and item (iii) holds.
\end{proof}

\begin{theorem} \label{finite-vertices-computation}
Let $\mathsf{k}$ be a field and $n \in \N$. If $E$ is a graph with only finitely many vertices, then there exist $d_1, \ldots, d_k \in \{2, 3, 4, \ldots \}$ and $m \in \{0, 1, 2, \ldots \}$ such that $d_i | d_{i+1}$ for $1 \leq i \leq k-1$ and
\begin{align*} &\coker \left( \begin{pmatrix} B_E ^t - I \\ C_E ^t \end{pmatrix} : K_n (\mathsf{k})^{E^0 _\reg} \to K_n (\mathsf{k})^{E^0} \right) \\ 
& \qquad  \cong \bigslant{K_n (\mathsf{k})}{ \langle d_1 x : x \in K_n (\mathsf{k}) \rangle } \oplus \cdots \oplus \bigslant{K_n (\mathsf{k})}{ \langle d_k x : x \in K_n (\mathsf{k}) \rangle } \oplus K_n(\mathsf{k})^{m+|E^0 _\sing|} \end{align*}
and
\begin{align*} &\ker \left( \begin{pmatrix} B_E ^t - I \\ C_E ^t \end{pmatrix} : K_n (\mathsf{k})^{E^0 _\reg} \to K_n (\mathsf{k})^{E^0} \right) \\ 
& \qquad \qquad \qquad \qquad \cong K_n (\mathsf{k})^m \oplus \left( \bigoplus_{i=1} ^k \ker ((d_i):K_n (\mathsf{k}) \to K_n (\mathsf{k})) \right). \end{align*}
Moreover,
$$K_0 (L_\mathsf{k} (E)) \cong \Z_{d_1} \oplus \cdots \oplus \Z_{d_k} \oplus \Z^{m+|E^0 _\sing|}$$
and
$$K_1 (L_\mathsf{k} (E)) \cong  \bigslant{ \mathsf{k}^\times } {\{ x^{d_1} : x \in \mathsf{k}^\times \} } \oplus \cdots \oplus \bigslant{ \mathsf{k}^\times } { \{ x^{d_k} : x \in \mathsf{k}^\times \} } \oplus (\mathsf{k}^\times)^{m+|E^0 _\sing|}  \oplus \Z^m.$$
\end{theorem}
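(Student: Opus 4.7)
The plan is to reduce the entire theorem to the Smith Normal Form of the integer matrix $M := \left( \begin{smallmatrix} B_E^t - I \\ C_E^t \end{smallmatrix} \right)$. Since $E$ has only finitely many vertices, $M$ is a finite integer matrix of size $|E^0| \times |E^0_{\reg}|$; write $r := |E^0_{\reg}|$ and $s := |E^0_{\sing}|$. First I would invoke the Smith Normal Form theorem to obtain unimodular matrices $P \in GL_{r+s}(\Z)$ and $Q \in GL_r(\Z)$ with $PMQ = D$, where $D$ is the rectangular diagonal matrix whose nonzero diagonal entries are the elementary divisors $e_1 \mid e_2 \mid \cdots \mid e_{r'}$ of $M$, with $r' = \rank M$. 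I would then discard those $e_i$ equal to $1$ and relabel the remaining elementary divisors as $d_1 \mid d_2 \mid \cdots \mid d_k$ in $\{2,3,\ldots\}$, and set $m := r - r'$. Note that these integers depend only on $E$, not on $n$.

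The key observation is that an integer matrix acts naturally on $G^r$ for any abelian group $G$ (using additive or multiplicative notation as appropriate), and unimodular integer matrices thereby induce \emph{group} automorphisms of $G^r$. Consequently, for any abelian group $G$, the kernel and cokernel of $M : G^r \to G^{r+s}$ are isomorphic, respectively, to those of the diagonal map $D: G^r \to G^{r+s}$. A coordinate-by-coordinate inspection of $D$ then yields
$$\ker D \,\cong\, G^m \oplus \bigoplus_{i=1}^k \ker(d_i : G \to G), \qquad \coker D \,\cong\, G^{s+m} \oplus \bigoplus_{i=1}^k G/d_i G,$$
where the $G^m$ summand accounts for the $m = r - r'$ zero columns of $D$ and the $G^{s+m}$ summand accounts for the $r+s-r' = s+m$ zero rows. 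Specialising to $G = K_n(\mathsf{k})$ produces the first two displayed isomorphisms in the theorem (after rewriting $d_i G$ in the form $\langle d_i x : x \in G \rangle$).

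The formulas for $K_0$ and $K_1$ then follow by invoking Proposition~\ref{3prop}. For $K_0$, part~(ii) gives the cokernel with $G = K_0(\mathsf{k}) = \Z$, in which case $\Z / d_i \Z \cong \Z_{d_i}$ and no kernel term appears. For $K_1$, part~(iii) expresses $K_1(L_\mathsf{k}(E))$ as the direct sum of a cokernel (taken over $G = K_1(\mathsf{k}) = \mathsf{k}^\times$, so the group operation is multiplication and multiplication by $d_i$ becomes the $d_i$-th power map) and a kernel (taken over $G = \Z$, which contributes only $\Z^m$ because $\ker(d_i : \Z \to \Z) = 0$ for each $d_i \geq 2$). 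Assembling these summands produces the stated decomposition.

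I do not anticipate any serious obstacle in carrying out this argument; the main conceptual point is simply the remark that unimodular integer matrices act as automorphisms on $G^r$ for \emph{any} abelian group $G$, which allows the single integer Smith decomposition of $M$ to control all of the $K$-theoretic computations uniformly across all degrees $n$ and coefficient groups $G$.
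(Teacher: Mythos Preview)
Your proposal is correct and follows essentially the same approach as the paper's proof: both reduce to the Smith Normal Form of the integer matrix $\left( \begin{smallmatrix} B_E^t - I \\ C_E^t \end{smallmatrix} \right)$, compute the kernel and cokernel of the resulting diagonal map coordinatewise, and then specialize via Proposition~\ref{3prop} for $K_0$ and $K_1$. Your write-up is slightly more explicit about why unimodular matrices act as automorphisms of $G^r$ for arbitrary abelian $G$, but the argument is the same.
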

\begin{proof}
If $|E^0| < \infty$, then the matrix $\begin{pmatrix} B_E ^t -I \\ C_E ^t \end{pmatrix}$ has Smith normal form $\begin{pmatrix} D \\ 0 \end{pmatrix}$, where $0$ is the $|E^0 _\sing| \times |E^0 _\reg|$ matrix with $0$ in each entry and $D$ is the $|E^0 _\reg| \times |E^0 _\reg|$ diagonal matrix $\diag(1, \ldots, 1, d_1, \ldots, d_k, 0, \ldots, 0)$, with $0$ in each of the last $m$ entries for some $m \in \{0,1,2,\ldots \}$. If $\mathsf{k}$ is a field, then
\begin{align*} \coker & \left( \begin{pmatrix} B_E ^t - I \\ C_E ^t \end{pmatrix} : K_n (\mathsf{k})^{E^0 _\reg} \to K_n (\mathsf{k})^{E^0} \right) \cong \coker \left( \begin{pmatrix} D \\ 0 \end{pmatrix} : K_n (\mathsf{k})^{E^0 _\reg} \to K_n (\mathsf{k})^{E^0} \right) \\ & \cong \bigslant{K_n (\mathsf{k})} { \langle d_1 x : x \in K_n (\mathsf{k}) \rangle } \oplus \cdots \oplus \bigslant{ K_n (\mathsf{k}) } { \langle d_k x : x \in K_n (\mathsf{k}) \rangle } \oplus K_n(\mathsf{k})^{m+|E^0 _\sing|} \end{align*}
and
\begin{align*} \ker \left( \begin{pmatrix} B_E ^t - I \\ C_E ^t \end{pmatrix} : K_n (\mathsf{k})^{E^0 _\reg} \to K_n (\mathsf{k})^{E^0} \right) & \cong \ker \left( \begin{pmatrix} D \\ 0 \end{pmatrix} : K_n (\mathsf{k})^{E^0 _\reg} \to K_n (\mathsf{k})^{E^0} \right) \\  \cong  K_n & (\mathsf{k})^m \oplus  \bigoplus_{i=1} ^k \ker \left((d_i): K_n (\mathsf{k}) \to K_n (\mathsf{k}) \right). \end{align*}
When $n=0$ or $n=1$, then the last claim follows from Proposition~\ref{3prop} by substituting $\left( \begin{smallmatrix} D \\ 0 \end{smallmatrix} \right)$ for $\left(\begin{smallmatrix} B_E ^t - I \\ C_E ^t \end{smallmatrix}\right)$ in Proposition~\ref{3prop} (ii) and (iii).
\end{proof}

\begin{remark}
In Theorem~\ref{finite-vertices-computation}, the case when $k=0$ (and the list $d_1 , \ldots , d_k $ is empty) is possible.
\end{remark}

\section{$K$-theory for Leavitt Path Algebras over Finite Fields}

In this section we compute the $K$-groups of a Leavitt path algebra over a finite field $\mathsf{k}=\F_q$ with $q$ elements. (So $q=p^k$ for some prime $p$, where $p$ is the characteristic of the field.)

\begin{proposition} \label{LPA-K-theory-finite-field-even}
Let $E$ be a graph, let $\F_q$ be a finite field with $q$ elements, and consider the Leavitt path algebra $L_{\F_q} (E)$. If $n \geq 2$ is even and $n=2j$ for some $j \in \N$, then
$$K_n (L_{\F_q} (E)) \cong \ker \left( \left( \begin{matrix} B_E ^t - I \\ C_E ^t \end{matrix} \right):
\Z_{q^j -1} ^{E^0 _{\reg}} \to \Z_{q^j -1} ^{E^0} \right).$$
\end{proposition}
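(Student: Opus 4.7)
The plan is to invoke the long exact sequence of Theorem~\ref{abc} at the given degree $n = 2j$, and then use Quillen's classical computation of the algebraic $K$-theory of finite fields to reduce the sequence to something that identifies $K_n(L_{\F_q}(E))$ with the desired kernel.

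Recall from Quillen's theorem on the algebraic $K$-theory of finite fields that for every prime power $q$ and every $j \in \N$, one has $K_{2j}(\F_q) = 0$ and $K_{2j-1}(\F_q) \cong \Z_{q^j - 1}$. First I would specialize the long exact sequence of Theorem~\ref{abc} to $\mathsf{k} = \F_q$ and degree $n = 2j$. This gives the five-term fragment
\[
K_{2j}(\F_q)^{E^0_\reg} \xrightarrow{\left(\begin{smallmatrix} B_E^t - I \\ C_E^t \end{smallmatrix}\right)} K_{2j}(\F_q)^{E^0} \to K_{2j}(L_{\F_q}(E)) \to K_{2j-1}(\F_q)^{E^0_\reg} \xrightarrow{\left(\begin{smallmatrix} B_E^t - I \\ C_E^t \end{smallmatrix}\right)} K_{2j-1}(\F_q)^{E^0}.
\]

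Since direct sums of zero groups are zero, Quillen's result forces $K_{2j}(\F_q)^{E^0_\reg} = 0$ and $K_{2j}(\F_q)^{E^0} = 0$. Therefore exactness at $K_{2j}(L_{\F_q}(E))$ tells us that the map $K_{2j}(L_{\F_q}(E)) \to K_{2j-1}(\F_q)^{E^0_\reg}$ is injective. Substituting $K_{2j-1}(\F_q) \cong \Z_{q^j - 1}$ and reading off the image, exactness at $K_{2j-1}(\F_q)^{E^0_\reg} \cong \Z_{q^j-1}^{E^0_\reg}$ identifies $K_{2j}(L_{\F_q}(E))$ with the kernel of the map $\left(\begin{smallmatrix} B_E^t - I \\ C_E^t \end{smallmatrix}\right)\colon \Z_{q^j-1}^{E^0_\reg} \to \Z_{q^j-1}^{E^0}$, as claimed.

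There is no real obstacle here beyond correctly quoting Quillen's computation; the vanishing of $K_{2j}(\F_q)$ collapses the long exact sequence enough to reduce the entire computation to a kernel in a single cohomological degree. The only mild point to note is that when $E^0$ is infinite, the direct sums must be interpreted as in Theorem~\ref{abc} (i.e., $K_n(\F_q)^{E^0}$ denotes $\bigoplus_{E^0} K_n(\F_q)$), but this plays no role once the relevant terms vanish.
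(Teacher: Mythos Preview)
Your proof is correct and follows essentially the same approach as the paper: both invoke the long exact sequence of Theorem~\ref{abc} at degree $n=2j$, use Quillen's computation to kill the $K_{2j}(\F_q)$ terms, and read off $K_n(L_{\F_q}(E))$ as the kernel of $\left(\begin{smallmatrix} B_E^t - I \\ C_E^t \end{smallmatrix}\right)$ on the $\Z_{q^j-1}$ summands via exactness. The only cosmetic difference is that you display a five-term fragment whereas the paper displays four terms, and you add an (unnecessary but harmless) remark about infinite $E^0$.
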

\begin{proof}
By \cite[Theorem~8(i)]{quillen},
if $j \geq 1$, then $K_{2j} (\F_q)=\{0\}$, and if $j \geq 2$, then $K_{2j-1}(\F_q) = \Z_{q^j - 1}$.
 Hence the exact sequence of Theorem~\ref{abc} takes the form
$$ \xymatrix{ \cdots \ar[r]^-{\left( \begin{smallmatrix} B_E ^t - I \\ C_E ^t \end{smallmatrix} \right)} & \{0\}^{E^0} \ar[r]^-{\phi} & K_n (L_{\F_q} (E)) \ar[r]^-{\psi} & \Z_{q^j -1}^{E^0 _{\reg}} \ar[r]^-{\left( \begin{smallmatrix} B_E ^t - I \\ C_E ^t \end{smallmatrix} \right)} &\Z_{q^j -1} ^{E^0} \ar[r] & \cdots \\},$$
where $n=2j.$
By exactness, $\psi$ is injective and
$$K_n (L_{\F_q} (E)) \cong \im \psi = \ker  \left( \begin{matrix} B_E ^t - I \\ C_E ^t \end{matrix} \right) .$$
\end{proof}

\begin{proposition} \label{LPA-K-theory-finite-field-odd}
Let $E$ be a graph, let $\F_q$ be a finite field with $q$ elements, and consider the Leavitt path algebra $L_{\F_q} (E)$. If $n \geq 3$ is odd and $n=2j-1$ for some $j \in \N$, then
$$K_n (L_{\F_q}(E)) \cong \coker\left(\left( \begin{matrix} B_E ^t - I \\ C_E ^t \end{matrix} \right): \Z_{q^j -1}^{E^0 _{\reg}} \to  \Z_{q^j -1} ^{E^0} \right).$$
\end{proposition}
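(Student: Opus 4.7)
The plan is to mirror the strategy of Proposition~\ref{LPA-K-theory-finite-field-even}, invoking Quillen's computation of the $K$-theory of finite fields together with the long exact sequence established in Theorem~\ref{abc}. By \cite[Theorem~8(i)]{quillen}, for $j \geq 1$ we have $K_{2j}(\F_q) = \{0\}$, and for $j \geq 1$ we also have $K_{2j-1}(\F_q) = \Z_{q^j - 1}$ (the case $j=1$ giving $K_1(\F_q) = \F_q^\times \cong \Z_{q-1}$). Thus, for $n = 2j - 1$ with $j \geq 2$, we simultaneously have $K_n(\F_q) = \Z_{q^j - 1}$ and $K_{n-1}(\F_q) = K_{2j-2}(\F_q) = \{0\}$.

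Substituting these into the long exact sequence of Theorem~\ref{abc} at position $n$, the relevant segment becomes
\begin{equation*}
\xymatrix{
\Z_{q^j - 1}^{E^0_{\reg}} \ar[r]^-{\left( \begin{smallmatrix} B_E ^t - I \\ C_E ^t \end{smallmatrix} \right)} & \Z_{q^j - 1}^{E^0} \ar[r]^-{\phi} & K_n (L_{\F_q} (E)) \ar[r]^-{\psi} & \{0\}^{E^0_{\reg}}.
}
\end{equation*}
Exactness at $K_n(L_{\F_q}(E))$ forces $\psi = 0$ and hence $\phi$ is surjective, while exactness at $\Z_{q^j - 1}^{E^0}$ identifies $\ker \phi$ with the image of $\left( \begin{smallmatrix} B_E ^t - I \\ C_E ^t \end{smallmatrix} \right)$. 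The first isomorphism theorem then yields
$$K_n (L_{\F_q}(E)) \cong \coker\left(\left( \begin{matrix} B_E ^t - I \\ C_E ^t \end{matrix} \right): \Z_{q^j -1}^{E^0 _{\reg}} \to  \Z_{q^j -1} ^{E^0} \right),$$
as desired.

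There is essentially no obstacle here beyond verifying that we are looking at the correct spot in the long exact sequence and that Quillen's formula applies at both $n$ and $n-1$; the hypothesis $n \geq 3$ (equivalently $j \geq 2$) is precisely what guarantees that $K_{n-1}(\F_q) = \{0\}$, which is what collapses the sequence into the short form above. The proof is therefore a direct dual of Proposition~\ref{LPA-K-theory-finite-field-even}, with the roles of kernel and cokernel interchanged because the vanishing now occurs on the $K_{n-1}$ side rather than the $K_n$ side.
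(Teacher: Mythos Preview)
Your proof is correct and follows essentially the same approach as the paper: both invoke Quillen's computation to substitute $K_n(\F_q) \cong \Z_{q^j-1}$ and $K_{n-1}(\F_q) = \{0\}$ into the long exact sequence of Theorem~\ref{abc}, then read off the cokernel from exactness. Your write-up is slightly more explicit in naming the first isomorphism theorem and in explaining why the hypothesis $n \geq 3$ is what makes $K_{n-1}(\F_q)$ vanish, but the argument is the same.
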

\begin{proof}
By \cite[Theorem~8(i)]{quillen}, if $j \geq 1$, then $K_{2j} (\F_q)=\{0\}$, and if $j \geq 2$, then $K_{2j-1}(\F_q) = \Z_{q^j - 1}.$ So the exact sequence of Theorem~\ref{abc} becomes
$$\xymatrix{ \cdots \ar[r] &\Z_{q^j -1} ^{E^0 _{\reg}} \ar[r]^-{\left( \begin{smallmatrix} B_E ^t - I \\ C_E ^t \end{smallmatrix} \right)} & \Z_{q^j -1} ^{E^0} \ar[r]^-{\phi}& K_n (L_{\F_q} (E)) \ar[r]^-{\psi} & \{0\}^{E^0 _{\reg}} \ar[r] & \cdots \\},$$
where $n=2j-1.$
By exactness, $K_n (L_{\F_q} (E)) = \im \phi \cong \coker\left( \begin{smallmatrix} B_E ^t - I \\ C_E ^t \end{smallmatrix} \right).$
\end{proof}

\begin{theorem} \label{LPA-K-theory-finite-field}
Let $E$ be a graph, let $\F_q$ be a finite field with $q$ elements, and consider the Leavitt path algebra $L_{\F_q} (E)$.  Then for any $n \in \Z$ we have
$$K_n (L_{\F_q} (E)) \cong \begin{cases} 0 & \text{if $n \leq -1$} \\ 
 \coker\left( \left( \begin{smallmatrix} B_E ^t - I \\ C_E ^t \end{smallmatrix} \right) : \Z^{E^0_\reg} \to \Z^{E^0} \right) & \text{if $n = 0$} \\ 
\coker \left( \left( \begin{smallmatrix} B_E ^t - I \\ C_E ^t \end{smallmatrix} \right) : \Z_{q-1}^{E^0_\reg} \to \Z_{q-1}^{E^0} \right)  \\ \quad \oplus \ker \left( \left( \begin{smallmatrix} B_E ^t - I \\ C_E ^t \end{smallmatrix} \right) : \Z^{E^0_\reg} \to \Z^{E^0} \right) & \text{if $n = 1$} \\  
\ker \left( \left( \begin{smallmatrix} B_E ^t - I \\ C_E ^t \end{smallmatrix} \right):
\Z_{q^j -1} ^{E^0 _{\reg}} \to \Z_{q^j -1} ^{E^0} \right) & \text{if $n \geq 2$ is even and $n = 2j$}  \\
 \coker\left(\left( \begin{smallmatrix} B_E ^t - I \\ C_E ^t \end{smallmatrix} \right): \Z_{q^j -1}^{E^0 _{\reg}} \to  \Z_{q^j -1} ^{E^0}  \right) & \text{if $n \geq 3$ is odd and $n = 2j-1$.} 
\end{cases}$$
\end{theorem}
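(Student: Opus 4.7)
The plan is to assemble this theorem as a direct collection of the results already established in the paper. The five cases in the statement correspond, in order, to Proposition~\ref{3prop}(i), Proposition~\ref{3prop}(ii), Proposition~\ref{3prop}(iii), Proposition~\ref{LPA-K-theory-finite-field-even}, and Proposition~\ref{LPA-K-theory-finite-field-odd}. For each range of $n$, I would simply quote the appropriate statement, specialized to $\mathsf{k} = \F_q$ where necessary. No new $K$-theoretic input is required beyond what has already been assembled.

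The only case that requires any identification is $n = 1$. Proposition~\ref{3prop}(iii) gives the cokernel with coefficients in $\mathsf{k}^\times$, while the theorem is stated with coefficients in $\Z_{q-1}$. To bridge these, I would invoke the classical fact that $\F_q^\times$ is cyclic of order $q-1$, hence isomorphic as an abelian group to $\Z_{q-1}$. Applied coordinatewise, this isomorphism intertwines with the matrix map $\left( \begin{smallmatrix} B_E^t - I \\ C_E^t \end{smallmatrix} \right)$, since that map is specified by integer matrix multiplication on an abelian group and does not depend on how the group is presented. Hence the cokernels are isomorphic, and combining with the kernel summand (which is already in the stated form) yields the $n = 1$ formula.

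No obstacle is anticipated; this is a bookkeeping statement that consolidates the preceding computations into a single reference formula. The main care required is to match the ranges of $n$ correctly: $n = 1$ is handled by Proposition~\ref{3prop}(iii) rather than Proposition~\ref{LPA-K-theory-finite-field-odd}, whose range begins at $n = 3$, while $n = 2$ is already included in Proposition~\ref{LPA-K-theory-finite-field-even} via $j = 1$. Once these boundaries are checked, the statement follows by a one-line appeal to each of the relevant prior results.
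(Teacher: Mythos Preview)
Your proposal is correct and matches the paper's own proof essentially line for line: the paper likewise dispatches the five cases by citing Proposition~\ref{3prop}(i)--(iii), Proposition~\ref{LPA-K-theory-finite-field-even}, and Proposition~\ref{LPA-K-theory-finite-field-odd}, and handles the $n=1$ case by invoking the cyclicity of $\F_q^\times$ to identify the multiplicative cokernel with the additive one over $\Z_{q-1}$. Your remark about checking the boundary cases ($n=1$ versus $n=3$, and $n=2$ via $j=1$) is a helpful sanity check that the paper leaves implicit.
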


\begin{proof}
The case when $n \geq 2$ is even follows from Proposition~\ref{LPA-K-theory-finite-field-even}, and the case when $n \geq 3$ is odd follows from Proposition~\ref{LPA-K-theory-finite-field-odd}.  The case when $n \leq -1$  follows from Proposition~\ref{3prop}(i), and the case when $n=0$ follows from Proposition~\ref{3prop}(ii).  When $n=1$, Proposition~\ref{3prop}(iii) shows that 
$$K_1 (L_\mathsf{k} (E))  \cong \coker  \left( \left( \begin{smallmatrix} B_E ^t - I \\ C_E ^t \end{smallmatrix} \right): (\F_q^{\times})^{E^0 _{\reg}} \to (\F_q^{\times})^{E^0} \right)  \oplus 
\ker \left( \left( \begin{smallmatrix} B_E ^t - I \\ C_E ^t \end{smallmatrix} \right): \Z^{E^0 _{\reg}} \to  \Z^{E^0} \right).$$ Since $\F_q$ is a finite field with $q$ elements, if follows that the multiplicative group $\F_q^\times$ is a cyclic group of order $q-1$ (see, for example, \cite[Theorem~5.3]{Lang} or \cite[Theorem~22.2]{Gallian}).  Thus the multiplicative group $\F_q^\times$ is isomorphic to the additive group $\Z_{q-1}$, and there is an element $\alpha \in \F_q^\times$ of multiplicative order $q-1$ with the isomorphism from $\F_q^\times$ to $\Z_{q-1}$ given by $\alpha^n \mapsto n$.  Thus if $x_1, \ldots, x_k \in \F_q^\times$ with $x_i = \alpha^{n_i}$, then the isomorphism takes an element of the form $x_1^{d_1} \ldots x_k^{d_k} \in \F_q^\times$ to the element $d_1n_1 + \ldots + d_kn_k$.  It follows that $\coker  \left( \left( \begin{smallmatrix} B_E ^t - I \\ C_E ^t \end{smallmatrix} \right): (\F_q^{\times})^{E^0 _{\reg}} \to (\F_q^{\times})^{E^0} \right)$ (where the groups are written multiplicatively) is isomorphic to $\coker  \left( \left( \begin{smallmatrix} B_E ^t - I \\ C_E ^t \end{smallmatrix} \right): \Z_{q-1}^{E^0 _{\reg}} \to \Z_{q-1}^{E^0} \right)$ (where the groups are written additively).
\end{proof}

\section{Computations of $K$-theory for Certain Leavitt Path Algebras}

In this section we compute the $K$-groups of a Leavitt path algebra under certain hypotheses on the $K$-groups of the underlying field.  This allows us to calculate the $K$-groups of a Leavitt path algebra over any algebraically closed field.

\begin{lemma} \label{divisible-injective-direct-sum-lem}
If $G$ is an abelian group and $D$ is a divisible subgroup of $G$, then $G \cong D \oplus G/D$.
\end{lemma}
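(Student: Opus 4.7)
The plan is to invoke the standard fact from homological algebra that a divisible abelian group is an injective object in the category of abelian groups. This is a consequence of Baer's criterion: to check injectivity of $D$ it suffices to show that every homomorphism from an ideal $n\Z \subseteq \Z$ into $D$ extends to $\Z$, which is immediate from divisibility (given $\varphi : n\Z \to D$, pick any $d \in D$ with $nd = \varphi(n)$ and extend by $1 \mapsto d$).

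Once injectivity of $D$ is in hand, consider the short exact sequence
\begin{equation*}
0 \longrightarrow D \stackrel{\iota}{\longrightarrow} G \stackrel{\pi}{\longrightarrow} G/D \longrightarrow 0,
\end{equation*}
where $\iota$ is the inclusion. Applying the injectivity of $D$ to the identity map $\operatorname{id}_D : D \to D$ along $\iota$, we obtain a homomorphism $r : G \to D$ with $r \circ \iota = \operatorname{id}_D$. Thus $r$ is a retraction, so the sequence splits, and consequently $G \cong D \oplus G/D$.

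I expect no real obstacle here; the entire content of the lemma is the injectivity of divisible abelian groups plus the splitting lemma, both of which are standard. The only minor choice is whether to quote Baer's criterion or to write the one-line verification of injectivity inline; either presentation is short. No further input from the graph-theoretic or Leavitt path algebra setup is needed.
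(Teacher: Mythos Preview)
Your proof is correct and follows essentially the same approach as the paper: the paper simply cites Hungerford (Ch.~IV \S3, Lemma~3.9 and Proposition~3.13) for the facts that divisible abelian groups are injective and that injective subgroups are direct summands, which is exactly what you spell out via Baer's criterion and the splitting lemma.
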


\begin{proof}
This follows from \cite[Ch.IV \S3 Lemma~3.9 and Proposition~3.13]{Hungerford}.
\end{proof}

$ $

\begin{theorem} \label{algclosed}
Let $E$ be a graph, let $\mathsf{k}$ be a field, and consider the Leavitt path algebra $L_\mathsf{k} (E)$. For each $n \in \Z$, if $K_n (\mathsf{k})$ is divisible or if $K_{n-1} (\mathsf{k})$ is free abelian, then $K_n (L_\mathsf{k} (E))$ is isomorphic to a direct sum: 
\begin{align*}K_n (L_\mathsf{k} (E)) \cong \coker & \left( \left( \begin{matrix} B_E ^t -I \\ C_E ^t \end{matrix} \right) : K_n (\mathsf{k})^{E_{\reg} ^0} \to K_n (\mathsf{k})^{E^0} \right) \\ & \oplus~ \ker \left( \left(\begin{matrix} B_E ^t -I \\ C_E ^t \end{matrix} \right) : K_{n-1} (\mathsf{k})^{E_{\reg} ^0} \to K_{n-1} (\mathsf{k})^{E^0} \right). \end{align*} In particular, these hypotheses are satisfied and the isomorphism holds for all $n \in \Z$ when $\mathsf{k}$ is an algebraically closed field. 
\end{theorem}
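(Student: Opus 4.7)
The plan is to extract a short exact sequence from the long exact sequence of Theorem~\ref{abc} and then to argue that, under either of the stated hypotheses, this short exact sequence splits.

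First, I would use Theorem~\ref{abc} at index $n$ to produce the short exact sequence
\begin{equation*}
0 \longrightarrow \coker\!\left(\left(\begin{smallmatrix} B_E^t - I \\ C_E^t \end{smallmatrix}\right)\colon K_n(\mathsf{k})^{E^0_{\reg}} \to K_n(\mathsf{k})^{E^0}\right) \longrightarrow K_n(L_\mathsf{k}(E)) \longrightarrow \ker\!\left(\left(\begin{smallmatrix} B_E^t - I \\ C_E^t \end{smallmatrix}\right)\colon K_{n-1}(\mathsf{k})^{E^0_{\reg}} \to K_{n-1}(\mathsf{k})^{E^0}\right) \longrightarrow 0
\end{equation*}
in the usual way: the image of the incoming map into $K_n(L_\mathsf{k}(E))$ is the cokernel term, while the outgoing map has image equal to the kernel term by exactness at $K_{n-1}(\mathsf{k})^{E^0_{\reg}}$. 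This is exactly the procedure already used in the proof of Proposition~\ref{3prop}(iii).

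Next I would split this sequence under each of the two hypotheses separately. If $K_{n-1}(\mathsf{k})$ is free abelian, then $K_{n-1}(\mathsf{k})^{E^0_{\reg}}$ is a direct sum of copies of a free abelian group and is therefore free abelian; consequently its subgroup $\ker\left(\begin{smallmatrix} B_E^t - I \\ C_E^t \end{smallmatrix}\right)$ is free abelian, hence projective, so the sequence splits. If instead $K_n(\mathsf{k})$ is divisible, then $K_n(\mathsf{k})^{E^0}$ is divisible (a direct sum of divisible groups is divisible), and thus its quotient, the cokernel term, is divisible as well; applying Lemma~\ref{divisible-injective-direct-sum-lem} with $G = K_n(L_\mathsf{k}(E))$ and $D$ the image of the cokernel term yields the desired splitting. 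Either way we obtain the claimed direct sum decomposition.

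For the final sentence, I would verify case-by-case that for an algebraically closed field $\mathsf{k}$, at least one of the two hypotheses holds at every $n \in \Z$. When $n \leq -1$, Lemma~\ref{negk} gives $K_n(\mathsf{k}) = 0$, which is both free abelian and divisible. When $n = 0$, one uses $K_{-1}(\mathsf{k}) = 0$ (free abelian). When $n = 1$, the group $K_1(\mathsf{k}) = \mathsf{k}^\times$ is divisible because every nonzero element of an algebraically closed field admits $m$\textsuperscript{th} roots for every $m \geq 1$. Finally, when $n \geq 2$, one invokes Suslin's theorem that $K_n(\mathsf{k})$ is uniquely divisible for every algebraically closed field $\mathsf{k}$.

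The splitting arguments themselves are essentially routine once the short exact sequence is in hand, and the only substantive ingredient for the ``in particular'' statement is the known divisibility of the $K$-groups of an algebraically closed field. The main obstacle is therefore not combinatorial but bibliographic: one must cite Suslin's divisibility result for $K_n$, $n \geq 2$, of an algebraically closed field in order to cover all nonnegative indices $n \geq 2$ uniformly.
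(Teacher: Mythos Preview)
Your proof is correct and follows essentially the same approach as the paper: extract the short exact sequence from Theorem~\ref{abc}, split it either via divisibility of the cokernel (using Lemma~\ref{divisible-injective-direct-sum-lem}) or via freeness of the kernel, and then verify the hypotheses case-by-case for algebraically closed fields. One minor caveat: for $n \geq 2$ the $K$-groups of an algebraically closed field are divisible but not \emph{uniquely} divisible in positive characteristic (there is a $\Q/\Z[\tfrac{1}{p}]$ summand in odd degrees), so your invocation of ``Suslin's theorem'' should be stated as divisibility rather than unique divisibility---the paper cites \cite[Theorem~VI.1.6 and Corollary~VI.1.3.1]{kbook} for this, and divisibility is all the argument requires.
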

\begin{proof}
The long exact sequence of Theorem~\ref{abc} induces the short exact sequence
\begin{align*}
0 \longrightarrow \coker &\left( \left(  \begin{smallmatrix} B_E ^t - I \\ C_E ^t \end{smallmatrix} \right) : K_n (\mathsf{k})^{E^0_\textnormal{reg}} \to K_n (\mathsf{k})^{E^0} \right) \longrightarrow K_n (L_\mathsf{k}(E)) \\
 & \qquad \qquad  \longrightarrow  \ker \left( \left(  \begin{smallmatrix} B_E ^t - I \\ C_E ^t \end{smallmatrix} \right) : K_{n-1} (\mathsf{k})^{E^0_\textnormal{reg}} \to K_{n-1} (\mathsf{k})^{E^0} \right) \longrightarrow 0.
\end{align*} 
Suppose $K_n (\mathsf{k})$ is divisible. Since direct sums of divisible groups and quotients of divisible groups are divisible, the cokernel is divisible and Lemma~\ref{divisible-injective-direct-sum-lem} implies that
 the conclusion of the theorem holds. On the other hand, if $K_{n-1} (\mathsf{k})$ is free abelian, then $\ker \left( \begin{smallmatrix} B_E ^t - I \\ C_E ^t \end{smallmatrix} \right)$ is free abelian. This implies the short exact sequence splits, and the conclusion of the theorem holds.

If $\mathsf{k}$ is an algebraically closed field, then $K_n (\mathsf{k})$ is divisible for each $n \geq 1$. This follows from \cite[Theorem~VI.1.6]{kbook} if $\cha(\mathsf{k})=0$, and from \cite[Corollary~VI.1.3.1]{kbook} if $\cha(\mathsf{k}) \neq 0$. Moreover, if $n \leq 0$, then $K_{n-1} ((\mathsf{k}) = \{ 0 \}$ is free abelian.  Thus when $\mathsf{k}$ is an algebraically closed field, the hypotheses are satisfied and the isomorphism holds for all $n \in \Z^+$.
\end{proof}

\begin{example}
If $E$ is the graph consisting of one vertex and $n$ edges
$$\xymatrix{
\bullet \ar@(ru,rd) []^{n}
}$$
and $n\geq 2$, then $L_\mathsf{k} (E)$ is isomorphic to the Leavitt algebra $L_n$. If $n=1$, then $L_\mathsf{k} (E)$ is isomorphic to the Laurent polynomials $\mathsf{k}[x, x^{-1}]$. We consider $K_j (L_\mathsf{k} (E))$ when (i)~$\mathsf{k}$ is algebraically closed with characteristic $0$, (ii)~$\mathsf{k} = \R$, and (iii)~$\mathsf{k}$ has characteristic $p>0$ and $n=p^m +1$.

First suppose $\mathsf{k}$ is an algebraically closed field with characteristic $0$. By \cite[Theorem~VI.1.6]{kbook}, $K_{2j}(\mathsf{k})$ is isomorphic to a uniquely divisible group, and $K_{2j-1}(\mathsf{k})$ is isomorphic to the direct sum of a uniquely divisible group and $\Q / \Z$, for $j \geq 1$. Here $\left( \begin{smallmatrix} B_E ^t - I \\ C_E^t \end{smallmatrix} \right)= (n-1)$. Multiplication by a nonzero integer is an isomorphism on a uniquely divisible group, so if $n \neq 1$, Theorem~\ref{algclosed} gives
\begin{align*}
K_{2j} &(L_\mathsf{k} (E)) \\
& \cong  \coker \left( (n-1) : K_{2j} (\mathsf{k}) \to K_{2j} (\mathsf{k}) \right)  \oplus~ \ker \left( (n-1) : K_{2j-1} (\mathsf{k}) \to K_{2j-1} (\mathsf{k}) \right) \\
& \cong \{ 0 \}  \oplus \ker( (n-1): \Q /\Z \to \Q / \Z) \cong \Z / (n-1)\Z \cong \Z_{n-1} \end{align*}
(where for the last isomorphism one checks that the class in $\Q/\Z$ represented by $\frac{1}{n-1}$ generates $\ker( (n-1): \Q /\Z \to \Q / \Z)$ with order $n-1$), and 
\begin{align*}
K_{2j+1} &(L_\mathsf{k} (E)) \\
& \cong  \coker \left( (n-1) : K_{2j+1} (\mathsf{k}) \to K_{2j+1} (\mathsf{k}) \right)  \oplus~ \ker \left( (n-1) : K_{2j} (\mathsf{k}) \to K_{2j} (\mathsf{k}) \right) \\
& \cong \coker ((n-1): \Q/\Z \to \Q / \Z) \oplus \{0 \} \cong \{0\}
\end{align*}
If $n=1$, then $K_j (L_\mathsf{k} (E)) \cong K_j (\mathsf{k}) \oplus K_{j-1} (\mathsf{k})$ for all $j \in \Z$.

Now suppose $\mathsf{k}=\R.$  By \cite[Theorem VI.3.1]{kbook} or \cite[Corollary 22.6]{grayson}, if $j \in \N$ then
$$K_j (\R) \cong \left\{\begin{array}{ll}
D_j \oplus \Z_2 & \text{if } j \equiv 1, 2 ~ (\modulo 8)\\
D_j \oplus \Q / \Z & \text{if } j \equiv 3,7 ~ (\modulo 8)\\
D_j & \text{if } j \equiv 0,4,5,6 ~ (\modulo 8),
\end{array} \right.$$
where $D_j$ is a uniquely divisible group. If $n$ is even, then
$$K_j (L_\R (E)) \cong \left\{\begin{array}{ll} 
0 &\text{if } j \equiv 1,2, 3 ~ (\modulo 4) \\
\Z_{n-1} & \text{if } j \equiv 0 ~ (\modulo 4),
\end{array} \right.$$
and if $n\neq 1$ is odd, then
$$K_j (L_\R (E)) \cong \left\{\begin{array}{ll} 
0 & \text{if } j \equiv 5,6,7 ~ (\modulo 8)\\
\Z_2 &\text{if } j \equiv 1, 3 ~ (\modulo 8)\\
\Z_{n-1} & \text{if } j \equiv 0, 4 ~ (\modulo 8) \\
\Z_4 \text{ or } \Z_2 \oplus \Z_2 & \text{if } j \equiv 2 ~ (\modulo 8)
\end{array} \right.$$
for $j \in \N$.

Finally, suppose $\mathsf{k}$ is a field with characteristic $p > 0$. By \cite[Theorem VI.4.7 (b)]{kbook}, it follows $K_j (\mathsf{k})$ has no $p$-torsion for $j \geq 0$. If $n=p^m + 1$ for some $m \in \N$, then using Theorem~\ref{abc} and the fact that $\ker((p^m): K_{j-1} (\mathsf{k}) \to K_{j-1} (\mathsf{k}))=0$, we obtain
$$K_j (L_\mathsf{k} (E)) = \coker((p^m): K_j (\mathsf{k}) \to K_j ( \mathsf{k})) \cong K_j (\mathsf{k}) / p^m K_j (\mathsf{k}).$$
\end{example}

\begin{definition}[The Cuntz Splice at a vertex $v$] \label{csdef} Let $E=(E^0, E^1, r_E, s_E)$ be a graph and let $v \in E^0$. Define a graph $F=(F^0, F^1, r_F, s_F)$ by $F^0 = E^0 \cup \{v_1, v_2 \}$, $F^1=E^1 \cup \{ e_1, e_2, f_1, f_2, h_1, h_2 \}$, and let $r_F$ and $s_F$ extend $r_E$ and $s_E$, respectively, and satisfy
$$s_F(e_1)=v, s_F(e_2)=v_1, s_F(f_1)=v_1, s_F (f_2) =v_2, s_F(h_1)=v_1, s_F (h_2)=v_2$$
and
$$r_F(e_1)=v_1, r_F(e_2)=v, r_F(f_1)=v_2, r_F(f_2)=v_1, r_F(h_1)=v_1, r_F(h_2)=v_2.$$
We say that $F$ is obtained by applying the Cuntz splice to $E$ at $v$. For example, the graph
$$\xymatrix{ \bullet \ar@(lu,ru) [] \ar@/^/[r]& \star \ar@/^/[l]}$$
becomes
$$\xymatrix{ \bullet \ar@(lu,ru) [] \ar@/^/[r]& \star \ar@/^/[l] \ar@/^/[r] & \bullet \ar@(rd,ld) []\ar@/^/[r] \ar@/^/[l] & \bullet \ar@(rd,ld) [] \ar@/^/[l] }$$
$ $

\noindent if we apply the Cuntz splice at the $\star$ vertex.
\end{definition}
It was shown in \cite[Proposition 9.3]{rt} that if $E$ is a graph, then the Cuntz splice preserves the $K_0$-group and the $K_1$-group of the associated Leavitt path algebra $L_\mathsf{k} (E)$ for any choice of $\mathsf{k}$. Here we show that, for the kinds of fields described in Theorem~\ref{algclosed}, the Cuntz splice preserves the $K_n$-group of the associated Leavitt path algebra for all $n \in \Z$.

\begin{corollary}
Let $E$ be a graph, let $v\in E^0$, and let $F$ be a graph obtained by applying the Cuntz splice to $E$ at $v$. If $\mathsf{k}$ is a field and $n \in \Z$ such that either $K_n (\mathsf{k})$ is divisible or $K_{n-1} (\mathsf{k})$ is free, then $K_n (L_\mathsf{k} (E)) \cong K_n (L_\mathsf{k} (F))$.  
\end{corollary}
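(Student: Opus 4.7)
The plan is to combine Theorem~\ref{algclosed} with a matrix-level observation: the Cuntz splice modifies the presentation matrix $\binom{B^t - I}{C^t}$ only up to $\Z$-equivalence by elementary row and column operations together with the addition of an $I_2$ summand. First, I would apply Theorem~\ref{algclosed} to both $E$ and $F$. Under the hypothesis that $K_n(\mathsf{k})$ is divisible or $K_{n-1}(\mathsf{k})$ is free abelian, this gives direct sum decompositions of $K_n(L_\mathsf{k}(E))$ and $K_n(L_\mathsf{k}(F))$ into a cokernel (at level $n$) plus a kernel (at level $n-1$) of the respective presentation matrices. So it suffices to show that for any abelian group $G$, the cokernel and kernel of the map $\binom{B_E^t - I}{C_E^t} : G^{E^0_\reg} \to G^{E^0}$ are isomorphic respectively to those of $\binom{B_F^t - I}{C_F^t} : G^{F^0_\reg} \to G^{F^0}$.

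Next, I would establish the matrix-level claim that the integer matrix $\binom{B_F^t - I}{C_F^t}$ is equivalent over $\Z$ (via elementary row and column operations) to the block diagonal matrix $\binom{B_E^t - I}{C_E^t} \oplus I_2$; this is essentially the matrix manipulation underlying \cite[Proposition~9.3]{rt} in the $K_0$ and $K_1$ case. Writing $N_E := \binom{B_E^t - I}{C_E^t}$ and $N_F := \binom{B_F^t - I}{C_F^t}$, and ordering the vertices of $F$ so that the two new vertices $v_1, v_2$ appear last, $N_F$ takes the block form
$$N_F = \begin{pmatrix} N_E & X \\ Y & B \end{pmatrix}, \qquad B = \begin{pmatrix} 0 & 1 \\ 1 & 0 \end{pmatrix},$$
where $X$ has a single nonzero entry $1$ at position $(v, v_1)$ coming from the edge $e_1$, and $Y$ has a single nonzero entry $1$ at position $(v_1, v)$ coming from the edge $e_2$. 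Since $B$ is unimodular, one can clear $X$ by row operations and $Y$ by column operations using $B$; the resulting correction term to the upper-left block is $N_E - X B^{-1} Y$, and a direct calculation gives $X B^{-1} Y = 0$ (the supports of the two indicator vectors are swapped by $B^{-1}$ into non-interacting coordinates). Hence $N_F$ is $\Z$-equivalent to $N_E \oplus I_2$.

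Finally, because elementary $\Z$-equivalence is preserved after applying $- \otimes_\Z G$ for any abelian group $G$, the equivalence $N_F \sim N_E \oplus I_2$ induces isomorphisms of both cokernels and kernels with coefficients in $G$ (the $I_2$ factor contributes trivially). Substituting $G = K_n(\mathsf{k})$ and $G = K_{n-1}(\mathsf{k})$ into the decompositions from the first paragraph yields $K_n(L_\mathsf{k}(E)) \cong K_n(L_\mathsf{k}(F))$. The main obstacle is careful bookkeeping when $v$ is a singular vertex in $E$, especially when $v$ is a sink, since then $v$ becomes regular in $F$ and the partition $F^0 = F^0_\reg \sqcup F^0_\sing$ changes by more than just the two new vertices, requiring a separate matrix analysis; however, that case is already handled for $K_0$ and $K_1$ in \cite[Proposition~9.3]{rt} by the analogous matrix manipulation, and the same reasoning carries through to arbitrary abelian coefficient groups.
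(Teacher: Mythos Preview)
Your proposal is correct and follows essentially the same route as the paper: both apply Theorem~\ref{algclosed} to reduce to showing that the presentation matrices for $E$ and $F$ have isomorphic kernels and cokernels over an arbitrary abelian coefficient group, and then verify this by exhibiting a $\Z$-equivalence $N_F \sim N_E \oplus I_2$. The paper performs the row/column reduction explicitly (listing the two new vertices first and treating the regular-$v$ and singular-$v$ cases separately), while you phrase the same computation via the Schur complement $N_E - X B^{-1} Y$; the arguments are equivalent.
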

\begin{proof}
We use an argument similar to the one in \cite[Proposition 9.3(2)]{rt}. We begin by decomposing $E^0 = E_\reg ^0 \sqcup E_\sing ^0$ and writing the vertex matrix of $E$ as
$$A_E = \begin{pmatrix} B_E & C_E \\ * & * \end{pmatrix}.$$
Suppose $v$ is a regular vertex. Then the vertex matrix of $F$ has the form
$$A_F = \left(\begin{array}{cc|ccc|ccc}
1 & 1 & 0 & 0 & \cdots & 0 & 0 & \cdots \\ 
1 & 1 & 1 & 0 & \cdots & 0 & 0 & \cdots \\ \hline
0 & 1 & & & &&& \\ 
0 & 0 & & B_E& &&C_E& \\ 
\vdots & \vdots & & & &&& \\ \hline
0 & 0 & & & &&& \\ 
0 & 0 & & *& &&*& \\ 
\vdots & \vdots & & & &&& \\ 
\end{array} \right).$$
By Theorem~\ref{algclosed}, we obtain $K_n (L_\mathsf{k} (F))$ by considering the cokernel and kernel of the matrix
$$\left(\begin{array}{cc|ccc}
0 & 1 & 0 & 0 & \cdots \\
1 & 0 & 1 & 0 & \cdots \\ \hline
0 & 1 & && \\
0 & 0 & & B_E ^t - I & \\
\vdots & \vdots & & & \\ \hline
0 & 0 & && \\ 
0 & 0 & & C_E^t & \\
\vdots & \vdots & & 
\end{array}\right)
\begin{matrix} \text{ equivalent } \\ \longleftrightarrow \end{matrix}
\left(\begin{array}{cc|ccc}
1 & 0 & 0 & 0 & \cdots \\
0 & 1 & 0 & 0 & \cdots \\ \hline
0 & 0 & && \\
0 & 0 & & B_E ^t - I & \\
\vdots & \vdots & & & \\ \hline
0 & 0 & && \\ 
0 & 0 & & C_E^t & \\
\vdots & \vdots & & 
\end{array}\right).
$$
The $2 \times 2$ identity in the upper-left-hand corner has no effect on the cokernel and kernel, so
\begin{align*}K_n (L_\mathsf{k} (F)) \cong &\coker \left( \left( \begin{matrix} B_E ^t -I \\ C_E ^t \end{matrix} \right) : K_n (\mathsf{k})^{E_{\reg} ^0} \to K_n (\mathsf{k})^{E^0} \right) \\ & \qquad \oplus~  \ker \left( \left(\begin{matrix} B_E ^t -I \\ C_E ^t \end{matrix} \right) : K_{n-1} (\mathsf{k})^{E_{\reg} ^0} \to K_{n-1} (\mathsf{k})^{E^0} \right) \\
\cong & K_n (L_\mathsf{k}(E)). \end{align*}
Next, suppose $v$ is a singular vertex. Then the vertex matrix of $F$ has the form
$$A_F = \left(\begin{array}{cc|ccc|ccc}
1 & 1 & 0 & 0 & \cdots & 0 & 0 & \cdots \\ 
1 & 1 & 0 & 0 & \cdots & 1 & 0 & \cdots \\ \hline
0 & 0 & & & &&& \\ 
0 & 0 & & B_E& &&C_E& \\ 
\vdots & \vdots & & & &&& \\ \hline
0 & 1 & & & &&& \\ 
0 & 0 & & *& &&*& \\ 
\vdots & \vdots & & & &&& \\ 
\end{array} \right).$$
By Theorem~\ref{algclosed}, we obtain $K_n (L_\mathsf{k} (F))$ by considering the cokernel and kernel of the matrix
$$\left(\begin{array}{cc|ccc}
0 & 1 & 0 & 0 & \cdots \\
1 & 0 & 0 & 0 & \cdots \\ \hline
0 & 0 & && \\
0 & 0 & & B_E ^t - I & \\
\vdots & \vdots & & & \\ \hline
0 & 1 & && \\ 
0 & 0 & & C_E^t & \\
\vdots & \vdots & & 
\end{array}\right)
\begin{matrix} \text{ equivalent } \\ \longleftrightarrow \end{matrix}
\left(\begin{array}{cc|ccc}
1 & 0 & 0 & 0 & \cdots \\
0 & 1 & 0 & 0 & \cdots \\ \hline
0 & 0 & && \\
0 & 0 & & B_E ^t - I & \\
\vdots & \vdots & & & \\ \hline
0 & 0 & && \\ 
0 & 0 & & C_E^t & \\
\vdots & \vdots & & 
\end{array}\right).
$$
The $2 \times 2$ identity in the upper-left-hand corner has no effect on the cokernel and kernel, so as above
$K_n (L_\mathsf{k} (F)) \cong  K_n (L_\mathsf{k}(E)).$
\end{proof}

The following theorem is inspired by \cite[Theorem 9.4]{abc}.

\begin{theorem}
Let $E$ be a finite graph with no sinks such that $\det(A_E ^t - I) \neq 0$. If $\mathsf{k}$ is an algebraically closed field, then

$$K_n (L_\mathsf{k} (E)) \cong \left\{ \begin{array}{ll}
0 & \text{if } n \geq 1 \text{ is odd} \\
\ker((A_E ^t - I): G^{E^0} \to G^{E^0}) & \text{if } n \geq 2 \text{ is even} \\
\coker((A_E ^t - I): \Z^{E^0} \to \Z^{E^0})  &  \text{if } n = 0,
\end{array}\right.$$
where $G:= \Q / \Z$ if $\cha(\mathsf{k})=0$ or $G:= \Q / \Z[\frac{1}{p}]$ if $\cha (\mathsf{k})=p>0$.

Note that this produces a weak ``Bott periodicity" for Leavitt path algebras over algebraically closed fields and with $\det(A_E ^t - I) \neq 0$: Under these hypotheses we have that $K_{2n } ( L_{\mathsf{k}} (E) ) \cong K_{2}( L_{\mathsf{k}} (E) )$ and $K_{2n-1  } ( L_{\mathsf{k}} (E) ) \cong K_{1}( L_{\mathsf{k}} (E) ) \cong 0$ for all $n \in \N$.  Moreover,  if $\cha(\mathsf{k}) \nmid \det(A_E ^t - I)$, then we also have $K_0 ( L_{\mathsf{k}} (E) ) \cong K_{2}( L_{\mathsf{k}} (E) )$, so that $K_{2n } ( L_{\mathsf{k}} (E) ) \cong K_{0}( L_{\mathsf{k}} (E) )$ for all $n \in \Z^+$
\end{theorem}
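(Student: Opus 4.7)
The plan is to apply Theorem~\ref{algclosed} and then read off the result using Suslin's description of the $K$-theory of algebraically closed fields. Since $E$ is a finite graph (so $E^1$ is finite, forcing the absence of infinite emitters) with no sinks, every vertex is regular: $E^0_{\reg} = E^0$, the set $E^0_{\sing}$ is empty, and the block $C_E$ disappears while $B_E = A_E$. Hence the map $\left(\begin{smallmatrix} B_E^t - I \\ C_E^t \end{smallmatrix}\right)$ in Theorem~\ref{algclosed} becomes simply $A_E^t - I$, and (since the hypotheses of Theorem~\ref{algclosed} are met for every $n \in \Z$ over an algebraically closed field) we obtain
\begin{equation*}
K_n(L_\mathsf{k}(E)) \cong \coker\!\left(A_E^t - I \colon K_n(\mathsf{k})^{E^0} \to K_n(\mathsf{k})^{E^0}\right) \oplus \ker\!\left(A_E^t - I \colon K_{n-1}(\mathsf{k})^{E^0} \to K_{n-1}(\mathsf{k})^{E^0}\right)
\end{equation*}
for all $n \in \Z$.

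The input from $K$-theory used in the proof of Theorem~\ref{algclosed} is: $K_0(\mathsf{k}) = \Z$; $K_n(\mathsf{k}) = 0$ for $n \leq -1$; $K_{2j}(\mathsf{k})$ is uniquely divisible for $j \geq 1$; and $K_{2j-1}(\mathsf{k}) \cong D_{2j-1} \oplus G$ for $j \geq 1$, where $D_{2j-1}$ is uniquely divisible and $G$ is the group in the statement. (For $j=1$ this specializes to the decomposition of $\mathsf{k}^\times$ into its torsion-free divisible part and its torsion subgroup of roots of unity, which is $\Q/\Z$ when $\cha(\mathsf{k})=0$ and $\Q/\Z[\tfrac{1}{p}]$ when $\cha(\mathsf{k})=p$.) Two computations then drive everything. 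First, if $D$ is uniquely divisible then $D^{E^0}$ is a $\Q$-vector space on which $A_E^t - I$ acts as a $\Q$-linear endomorphism with nonzero determinant, hence invertibly; its kernel and cokernel on $D^{E^0}$ are both zero. Second, the snake lemma applied to the diagram with exact rows
\begin{equation*}
\xymatrix{
0 \ar[r] & H^{E^0} \ar[r] \ar[d]_{A_E^t - I} & \Q^{E^0} \ar[r] \ar[d]_{A_E^t - I} & G^{E^0} \ar[r] \ar[d]_{A_E^t - I} & 0 \\
0 \ar[r] & H^{E^0} \ar[r] & \Q^{E^0} \ar[r] & G^{E^0} \ar[r] & 0
}
\end{equation*}
(with $H=\Z$ in characteristic $0$ and $H=\Z[\tfrac{1}{p}]$ in characteristic $p$, so that $G = \Q/H$) yields, using that the middle vertical arrow is an isomorphism, the vanishing $\coker(A_E^t - I \colon G^{E^0} \to G^{E^0}) = 0$ together with an isomorphism $\ker(A_E^t - I \colon G^{E^0} \to G^{E^0}) \cong \coker(A_E^t - I \colon H^{E^0} \to H^{E^0})$.

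With these in hand, the three cases of the formula are a bookkeeping exercise. For $n = 0$ the kernel term vanishes because $K_{-1}(\mathsf{k}) = 0$, leaving $\coker(A_E^t - I \colon \Z^{E^0} \to \Z^{E^0})$. For $n = 2j$ with $j \geq 1$, the cokernel term vanishes (as $K_{2j}(\mathsf{k})$ is uniquely divisible) and the kernel term on $K_{2j-1}(\mathsf{k})^{E^0} = D_{2j-1}^{E^0} \oplus G^{E^0}$ reduces to $\ker(A_E^t - I \colon G^{E^0} \to G^{E^0})$. For $n = 2j - 1 \geq 1$, the cokernel on $K_{2j-1}(\mathsf{k})^{E^0}$ vanishes (the uniquely divisible summand by the first observation, the $G$-summand by the second), while the kernel on $K_{2j-2}(\mathsf{k})^{E^0}$ vanishes either because $A_E^t - I$ is injective on $\Z^{E^0}$ (when $j=1$) or because $K_{2j-2}(\mathsf{k})$ is uniquely divisible (when $j \geq 2$).

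Finally, the weak Bott-periodicity assertions are immediate: the right-hand side of the even-degree formula does not depend on $n \geq 1$, so $K_{2n}(L_\mathsf{k}(E)) \cong K_2(L_\mathsf{k}(E))$, and $K_{2n-1}(L_\mathsf{k}(E)) = 0$ for all $n \geq 1$. The additional identification $K_0 \cong K_2$ when $\cha(\mathsf{k}) \nmid \det(A_E^t - I)$ follows from the snake-lemma isomorphism $K_2 \cong \coker(A_E^t - I \colon H^{E^0} \to H^{E^0})$: in characteristic $0$ this cokernel is literally $K_0$, while in characteristic $p$ it equals $K_0 \otimes \Z[\tfrac{1}{p}]$ (by flatness), which coincides with $K_0$ because $K_0$ is a finite group of order $|\det(A_E^t - I)|$ and hence has no $p$-torsion under the hypothesis $p \nmid \det(A_E^t - I)$. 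The main obstacle is really only keeping track of the parity and characteristic cases; once Theorem~\ref{algclosed} and the structure of $K_*(\mathsf{k})$ are invoked, every piece is either a $\Q$-linear invertibility argument or a snake-lemma application.
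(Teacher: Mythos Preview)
Your argument is correct and follows the same overall strategy as the paper: invoke Theorem~\ref{algclosed}, feed in Suslin's description of $K_n(\mathsf{k})$ as (uniquely divisible)~$\oplus$~(possibly $G$), and use that $A_E^t - I$ acts invertibly on any $\Q$-vector space because $\det(A_E^t-I)\neq 0$. The one place you diverge is in handling the $G$-summand and the final $K_0\cong K_2$ identification. The paper works entirely through the Smith normal form $\diag(n_1,\dots,n_s)$ of $A_E^t-I$: divisibility of $G$ gives surjectivity on $G^{E^0}$, and the kernel is computed coordinatewise as $\bigoplus_i \ker\bigl((n_i):G\to G\bigr)\cong \bigoplus_i \Z_{n_i}$, which is then matched against $\coker\bigl(A_E^t-I:\Z^{E^0}\to\Z^{E^0}\bigr)$ by the same Smith form. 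You instead apply the snake lemma to $0\to H^{E^0}\to\Q^{E^0}\to G^{E^0}\to 0$ with $H=\Z$ or $\Z[\tfrac1p]$, obtaining in one stroke both $\coker(A_E^t-I\mid G^{E^0})=0$ and a natural isomorphism $\ker(A_E^t-I\mid G^{E^0})\cong\coker(A_E^t-I\mid H^{E^0})$, after which the characteristic-$p$ case is finished by tensoring with the flat module $\Z[\tfrac1p]$ and noting that $K_0$ has order $|\det(A_E^t-I)|$ prime to $p$. Your route avoids ever writing down the Smith form (until the order computation at the very end) and makes the $K_0\cong K_2$ isomorphism canonical rather than coordinate-dependent; the paper's route is more explicit and elementary. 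Both are short and neither is preferable in any essential way.
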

\begin{proof}
We may write
$$K_n (\mathsf{k}) \cong \left\{ \begin{array}{ll}
D_n \oplus G & \text{if } n \geq 1 \text{ is odd} \\
D_n & \text{if } n \geq 2 \text{ is even} \\
 \Z  &  \text{if } n = 0,
\end{array}\right.$$
where $D_n$ is a uniquely divisible group, $G= \Q / \Z$ if char($\mathsf{k}$) $=0$ by \cite[Theorem~VI.1.6]{kbook}, and $G= \Q / \Z[\frac{1}{p}]$ if char($\mathsf{k}$) $=p>0$ by \cite[Corollary~VI.1.3.1]{kbook}. In either case $G$ is divisible.

Since $(A_E ^t - I)$ is an $E^0 \times E^0$ matrix with nonzero determinant, it has Smith normal form with nonzero diagonal entries and zeros elsewhere. Since $D_n$ is uniquely divisible, $(A_E ^t - I):D_n ^{E^0} \to D_n ^{E^0}$ is an isomorphism, and since $G$ is divisible, $(A_E ^t - I): G^{E^0} \to G^{E^0}$ is a surjection.
If $n$ is odd, the map
$$(A_E ^t - I) : K_n (\mathsf{k}) ^{E^0} \to K_n (\mathsf{k}) ^{E^0}$$
decomposes as
$$(A_E ^t - I) \oplus (A_E ^t -I) : D_n ^{E^0} \oplus G^{E^0} \to D_n ^{E^0} \oplus G^{E^0},$$
and is an isomorphism in the first summand and a surjection in the second.
Thus, when we apply Theorem~\ref{algclosed}, for odd $n \geq 1$ we obtain $K_n (L_\mathsf{k} (E))  \cong \{0\} \oplus \{0\}$, and for even $n \geq 2$ we obtain $K_n (L_\mathsf{k} (E))  \cong \{ 0 \} \oplus \ker\left((A_E^t -I): G^{E^0} \to G^{E^0}\right)$.  In addition, for $n=0$ we have $K_0 (L_\mathsf{k} (E))  \cong  \coker((A_E ^t - I): \Z^{E^0} \to \Z^{E^0})$ by Proposition~\ref{3prop}.

Finally, suppose that the additional hypothesis $\cha(\mathsf{k}) \nmid \det(A_E ^t - I)$ holds.  Let $\mathrm{diag} ( n_{1} , \dots, n_{s} )$ be the Smith normal form for the matrix $A_{E}^{t} - I$.  Then $\det(A_E ^t - I) = n_1 \ldots n_s$, and since $\cha(\mathsf{k}) \nmid \det(A_E ^t - I)$, we may conclude that $\cha(\mathsf{k}) \nmid n_{i}$ for all $1 \leq i \leq s$.  If we consider the multiplication $(n_i) : G \to G$, a straightforward computation shows that this map has kernel equal to $\left\langle \overline{ \frac{1}{n_{i} } } \right\rangle$, where $\overline{\frac{1}{n_{i}}}$ is the class in $G$ represented by the element $\frac{1}{n_{i}}$ and $\left\langle \overline{ \frac{1}{n_{i} } } \right\rangle$ is the additive subgroup generated by $\overline{\frac{1}{n_{i}}}$.  Since this is a cyclic group of order $n_i$, we have that $(\ker (n_i) : G \to G) \cong \Z_{n_i}$, and therefore, 
\[
\ker((A_E ^t - I): G^{E^0} \to G^{E^0}) \cong \Z_{n_{1}} \oplus \cdots \oplus \Z_{n_{s}} \cong  \coker((A_E ^t - I): \Z^{E^0} \to \Z^{E^0}).
\]
\end{proof}

\section{Rank and Corank of an Abelian Group}

In this section we examine and develop basic properties of the rank and corank of an abelian group.  We also compare the values that the rank and corank can assign to an abelian group.

\subsection{Rank of an Abelian Group}

\begin{definition} \label{rank-def}
If $(G,+)$ is an abelian group, a finite collection of elements $\{ g_i \}_{i=1}^k \subseteq G$ is \emph{linearly independent (over $\Z$)} if whenever $\sum_{i=1}^k n_i g_i = 0$ for $n_1, \ldots, n_k \in \Z$, then $n_1 = \ldots = n_k = 0$.  Any two maximal linearly independent sets in $G$ have the same cardinality, and we define $\rank G$ to be this cardinality if a maximal linearly independent set exists and $\infty$ otherwise.  
\end{definition}

\begin{remark}
Let $G$ be an abelian group.  One can see that if $G$ contains a linearly independent set with $n$ elements, then there exists an injective homomorphism $\iota : \Z^n \to G$ (given a linearly independent set of $n$ elements in $G$, the fact $\Z^n$ is free abelian gives a homomorphism taking the generators of $\Z^n$ to these elements and the linear independence implies this homomorphism is injective).  Conversely, any injective homomorphism $\iota : \Z^n \to G$ will send the generators of $\Z^n$ to a set of $n$ linearly independent elements in $G$.  Thus
\begin{equation} \label{rank-inj-eqn}
\rank G = \sup \{ n \in \Z^+ : \text{there exists an injective homomorphism $\iota:  \Z^n \to G$} \}.
\end{equation}
Furthermore, if we form the tensor product $\Q \otimes_\Z G$, then since $\Q$ is a field, $\Q \otimes_\Z G$ is a vector space, and maximal linearly independent sets in $G$ correspond to bases in $\Q \otimes_\Z G$.  Thus 
\begin{equation} \label{rank-def-eqn}
\rank G = \dim_\Q (\Q \otimes_\Z G)
\end{equation}
where $\dim_\Q$ denotes the dimension as a $\Q$-vector space.

The equations in \eqref{rank-inj-eqn} and \eqref{rank-def-eqn} give two equivalent ways to define the rank of an abelian group.
\end{remark}

\begin{remark}
It is important to notice that we are working in the category of abelian groups and defining the ``rank of an abelian group".  This is different from how the ``rank of a group" is defined: If $G$ is a (not necessarily abelian) group, then the rank of $G$ is defined to be the smallest cardinality of a generating set for $G$.  These notions do not coincide; for example the group-rank of $\Z_n$ is 1, while using the abelian-group-rank from Definition~\ref{rank-def} we have $\rank \Z_n = 0$.  Sometime the term ``torsion-free rank" or ``Pr\"ufer rank" is used for this abelian-group-rank; however, we are going to simply call it ``rank" with the understanding we are working in the category of abelian groups.
\end{remark}

The following are some well-known facts about the torsion-free rank of an abelian group.  We will use these facts in the next sections.

\begin{proposition} \label{rank-elem-facts-prop}
The rank of an abelian group satisfies the following elementary properties:
\begin{enumerate}
\item[(i)] If $G$ and $H$ are isomorphic abelian groups, then $\rank G = \rank H$.
\item[(ii)]  $\rank \Z = 1$
\item[(iii)]  $\rank G = 0$ if and only if $G$ is a torsion group
\item[(iv)]  If $0 \to P \to Q \to R \to 0$ is an exact sequence of abelian groups $P$, $Q$, and $R$, then $\rank Q = \rank  P + \rank R$. 
\item[(v)]  If $G_1$ and $G_2$ are abelian groups, then $\rank (G_1 \oplus G_2) = \rank G_1 + \rank G_2$
\item[(vi)] If $\rank G = n < \infty$, then there exists an injective homomorphism $\iota :  \Z^n \to G$, and if $\rank G = \infty$ there exists an injective homomorphism $\iota : \bigoplus_{i=1}^\infty \Z \to G$.
\end{enumerate}
\end{proposition}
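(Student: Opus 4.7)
The plan is to use the characterization $\rank G = \dim_\Q(\Q \otimes_\Z G)$ established in equation \eqref{rank-def-eqn}, since $\Q$ is a flat $\Z$-module and most of these statements reduce to standard linear algebra facts about $\Q$-vector spaces.

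First I would dispatch (i), (ii), and (iii). For (i), an isomorphism $G \cong H$ induces a $\Q$-linear isomorphism $\Q \otimes_\Z G \cong \Q \otimes_\Z H$, so their $\Q$-dimensions agree. For (ii), $\Q \otimes_\Z \Z \cong \Q$ has $\Q$-dimension $1$. For (iii), I would argue directly from Definition~\ref{rank-def}: a singleton $\{g\}$ is linearly independent over $\Z$ if and only if $ng \neq 0$ for every nonzero $n \in \Z$, i.e., $g$ is a non-torsion element. Hence $\rank G \geq 1$ if and only if $G$ contains a non-torsion element, which is equivalent to $G$ not being a torsion group. Contrapositively, $\rank G = 0$ if and only if $G$ is a torsion group.

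For (iv), I would invoke the flatness of $\Q$ as a $\Z$-module: the short exact sequence $0 \to P \to Q \to R \to 0$ yields a short exact sequence of $\Q$-vector spaces
\[
0 \longrightarrow \Q \otimes_\Z P \longrightarrow \Q \otimes_\Z Q \longrightarrow \Q \otimes_\Z R \longrightarrow 0,
\]
and the standard rank--nullity identity for vector spaces gives $\dim_\Q(\Q \otimes_\Z Q) = \dim_\Q(\Q \otimes_\Z P) + \dim_\Q(\Q \otimes_\Z R)$, which by \eqref{rank-def-eqn} is exactly (iv). Item (v) is then the special case of (iv) applied to the split short exact sequence $0 \to G_1 \to G_1 \oplus G_2 \to G_2 \to 0$.

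Finally for (vi), I would use the characterization \eqref{rank-inj-eqn}. If $\rank G = n < \infty$, a maximal linearly independent set of cardinality $n$ determines an injective homomorphism $\Z^n \to G$ by sending the standard generators to those elements (well-defined since $\Z^n$ is free abelian; injective by linear independence). If $\rank G = \infty$, I would construct an injection from $\bigoplus_{i=1}^\infty \Z$ by induction: given a linearly independent set $\{g_1,\ldots,g_k\} \subseteq G$, since $\rank G > k$ the $\Q$-span of $\{1 \otimes g_1, \ldots, 1 \otimes g_k\}$ in $\Q \otimes_\Z G$ is a proper subspace, so there exists $g_{k+1} \in G$ with $1 \otimes g_{k+1}$ outside this span, and then $\{g_1,\ldots,g_{k+1}\}$ is linearly independent over $\Z$. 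The only mild subtlety is injectivity of the resulting homomorphism $\bigoplus_{i=1}^\infty \Z \to G$, which follows because any nonzero element of $\bigoplus_{i=1}^\infty \Z$ has finite support and by construction every finite initial segment of the $g_i$'s is linearly independent. No step here appears to present a serious obstacle; the infinite case of (vi) is the most involved, but it is a routine inductive construction.
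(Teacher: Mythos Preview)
Your proposal is correct and follows essentially the same approach as the paper: items (i)--(v) are handled via the identification $\rank G = \dim_\Q(\Q \otimes_\Z G)$ together with flatness of $\Q$, and item (vi) via the characterization \eqref{rank-inj-eqn}. The only cosmetic difference is in the infinite case of (vi): the paper invokes the existence of an infinite basis of $\Q \otimes_\Z G$ to produce the required infinite independent set in $G$ in one stroke, whereas you build it inductively; both arguments are standard and yield the same conclusion.
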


\begin{proof}
Items (i)--(v) follow from well-known properties of vector spaces over a field and the fact that $\Q$ is a flat $\Z$-module.  For (vi), if $\rank G < \infty$ the result follows from \eqref{rank-inj-eqn}. If $\rank G = \infty$, then \eqref{rank-def-eqn} implies $\Q \otimes_\Z G$ is an infinite-dimensional vector space over $\Q$ and hence contains an infinite basis, which corresponds to an infinite set in $G$ for which every finite subset is linearly independent.  The fact $\bigoplus_{i=1}^\infty \Z$ is a free abelian group with countably many generators implies there exists a homomorphism $\iota : \bigoplus_{i=1}^\infty \Z \to G$ and the fact every finite subset is linearly independent implies $\iota$ is injective.

\end{proof}

\subsection{Corank of an Abelian Group}

In analogy with the equation for the rank of an abelian group derived in \eqref{rank-inj-eqn}, we make the following definition.

\begin{definition} \label{corank-def}
If $G$ is an abelian group we define the \emph{corank of G} to be
$$\corank G := \sup \{ n \in \Z^+ : \text{there exists a surjective homomorphism $\pi : G \to \Z^n$} \}.$$
Note that $\corank G$ is an element of the extended positive integers $\{0, 1, 2, \ldots, \infty \}$.
\end{definition}

\begin{remark}
If $G$ is an abelian group and $\pi : G \to \Z^n$ is a surjective homomorphism, then $G / \ker \pi \cong \Z^n$.  Conversely, if $N$ is a subgroup of $G$ with $G/N \cong \Z^n$, then the quotient map from $G$ onto $G/N$ composed with an isomorphism from $G/N$ onto $\Z^n$ is surjective.  Hence
\begin{equation} \label{corank-free-quotient-eq}
\corank G = \sup \{ \rank (G/N) : N \text{ is a subgroup of $G$ and $G/N$ is free abelian} \}.
\end{equation}
\end{remark}

\begin{remark}
If $G$ is an abelian group and $\corank G = \infty$, then Definition~\ref{corank-def} implies that for every $n \in \Z$ there exists a surjective homomorphism from $G$ onto $\Z^n$.  However, if $\corank G = \infty$ it is not necessarily true that there exists a surjective homomorphism from $G$ onto $\bigoplus_{i=1}^\infty \Z$.  For example, if we consider the infinite direct product $\prod_{i=1}^\infty \Z$, then we see that projecting onto the first $n$ coordinates gives a surjective homomorphism onto $\Z^n$, and hence $\corank \prod_{i=1}^\infty \Z = \infty$.  However, there is no surjective homomorphism from $\prod_{i=1}^\infty \Z$ onto $\bigoplus_{i=1}^\infty \Z$ (The reason for this is that $\bigoplus_{i=1}^\infty \Z$ is a ``slender" group, see \cite[Chapter~VIII, \S94]{Fuc}.)  Contrast this situation with what occurs when $\rank G = \infty$ in Proposition~\ref{rank-elem-facts-prop}(vi).
\end{remark}

\begin{definition}
Recall that an abelian group $G$ is \emph{divisible} if for every $y \in G$ and for every $n \in \N$, there exists $x \in G$ such that $nx=y$.  Likewise, an abelian group $G$ is \emph{weakly divisible} if for every $y \in G$ and for every $N \in \N$, there exists $n \geq N$ and $x \in G$ such that $nx=y$.
\end{definition}

\begin{proposition} \label{corank-elem-facts-prop}
The corank of an abelian group satisfies the following elementary properties:
\begin{enumerate}
\item[(i)] If $G$ and $H$ are isomorphic abelian groups, then $\corank G = \corank H$.
\item[(ii)]  $\corank \Z^n = n$.
\item[(iii)]  If $G$ is a torsion group, then $\corank G = 0$.
\item[(iv)]  If $G$ is a weakly divisible group, then $\corank G = 0$.  
\item[(v)]  $\corank G = 0$ if and only if $G$ has no nonzero free abelian quotients.
\end{enumerate}
\end{proposition}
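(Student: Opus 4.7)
The plan is to check each item in turn by appealing directly to Definition~\ref{corank-def} together with the basic rank properties collected in Proposition~\ref{rank-elem-facts-prop}. For (i), any isomorphism $\phi \colon G \to H$ induces a bijection between surjective homomorphisms $G \to \Z^n$ and surjective homomorphisms $H \to \Z^n$ via $\pi \mapsto \pi \circ \phi^{-1}$, so the defining suprema agree. For (ii), the identity witnesses $\corank \Z^n \geq n$; for the reverse inequality, if $\pi \colon \Z^n \to \Z^m$ is surjective, then the short exact sequence $0 \to \ker\pi \to \Z^n \to \Z^m \to 0$ combined with Proposition~\ref{rank-elem-facts-prop}(iv) yields $m = \rank \Z^m \leq \rank \Z^n = n$.

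For (iii), any homomorphism $\pi \colon G \to \Z^n$ out of a torsion group must be zero: each $g \in G$ satisfies $kg = 0$ for some $k \geq 1$, so $k\pi(g) = 0$, and since $\Z^n$ is torsion-free, $\pi(g) = 0$. Hence no surjection $G \to \Z^m$ with $m \geq 1$ exists. For (iv), the idea is similar but requires one more step. Given $\pi \colon G \to \Z^n$ and $g \in G$, write $\pi(g) = (a_1, \ldots, a_n)$, and fix a coordinate $a_i$. Choose $N > |a_i|$; weak divisibility provides $m \geq N$ and $x \in G$ with $mx = g$, so $\pi(g) = m \pi(x)$ and therefore $m \mid a_i$. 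Since $m > |a_i|$, this forces $a_i = 0$. Applying this to each coordinate shows $\pi(g) = 0$, so $\pi$ is trivial and $\corank G = 0$.

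For (v), the identity \eqref{corank-free-quotient-eq} gives one direction for free: if $\corank G = 0$, then every free abelian quotient $G/N$ has rank $0$ and is therefore trivial. Conversely, if $G$ admits a nonzero free abelian quotient $G/N \cong \bigoplus_I \Z$, then composing the quotient map with the projection onto any single $\Z$-summand yields a surjection $G \to \Z$, so $\corank G \geq 1$.

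Each of the five items reduces to a short mechanical verification; the only step requiring any finesse is (iv), where one must combine weak divisibility with the observation that no nonzero integer is divisible by arbitrarily large integers. No further obstacle is anticipated.
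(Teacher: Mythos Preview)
Your proof is correct and follows essentially the same approach as the paper. The only minor differences are that in (ii) you supply a rank-additivity argument where the paper simply asserts the nonexistence of a surjection $\Z^n \twoheadrightarrow \Z^m$ for $m>n$, and in (iv) you unpack directly the coordinate argument where the paper phrases it as ``the image of a weakly divisible group is weakly divisible and $\Z^n$ has no nonzero weakly divisible subgroups''; these are the same ideas at slightly different levels of detail.
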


\begin{proof}
The fact in (i) follows immediately from the definition of $\corank$. For (ii) we see that the identity map is a surjective homomorphism from $\Z^n$ onto $\Z^n$ and observe that there are no surjective homomorphisms from $\Z^n$ onto $\Z^m$ when $m > n$.  For (iii) observe that if $G$ is a torsion group, then since homomorphisms take elements of finite order to elements of finite order, there are no nonzero homomorphisms from $G$ to $\Z^n$, and hence $\corank G = 0$.  For (iv) we observe that the homomorphic image of a weakly divisible group is weakly divisible, and since $\Z^n$ has no weakly divisible subgroups other than zero, $\corank G = 0$.  For (v) we see from \eqref{corank-free-quotient-eq} that $\corank G = 0$ if and only if every free abelian quotient of $G$ is the zero group.
\end{proof}

In addition to these facts, there are two important properties of $\corank$ that are not immediate that we establish in Proposition~\ref{quotient-zero-no-change-prop} and Proposition~\ref{corank-direct-sum-prop}.

\begin{proposition} \label{quotient-zero-no-change-prop}
If $G$ is an abelian group and $H$ is a subgroup of $G$ with $\corank H = 0$, then $\corank (G/H) = \corank G$.
\end{proposition}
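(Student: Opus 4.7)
The plan is to prove both inequalities $\corank(G/H) \leq \corank G$ and $\corank G \leq \corank(G/H)$ separately, with essentially all the substance lying in the second one.

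For the inequality $\corank(G/H) \leq \corank G$: if $\bar\pi : G/H \to \Z^n$ is a surjective homomorphism, then $\bar\pi \circ q : G \to \Z^n$ is also surjective, where $q : G \to G/H$ is the quotient map. Hence every $n$ realized as the rank of a free abelian quotient of $G/H$ is also realized by a quotient of $G$, and taking the supremum gives $\corank(G/H) \leq \corank G$.

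For the reverse inequality $\corank G \leq \corank(G/H)$, the key observation is that the hypothesis $\corank H = 0$ forces any homomorphism from $H$ into a free abelian group to be zero. Indeed, suppose $\pi : G \to \Z^n$ is a surjective homomorphism. The image $\pi(H)$ is a subgroup of the free abelian group $\Z^n$, so $\pi(H)$ is itself free abelian of some rank $r$ with $0 \leq r \leq n$. The restriction $\pi|_H : H \twoheadrightarrow \pi(H) \cong \Z^r$ is then a surjection of $H$ onto $\Z^r$, which by Definition~\ref{corank-def} gives $r \leq \corank H = 0$. Hence $r = 0$, so $\pi(H) = 0$, i.e., $H \subseteq \ker \pi$. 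The universal property of the quotient then produces a surjective homomorphism $\bar\pi : G/H \to \Z^n$ with $\bar\pi \circ q = \pi$. This shows $\corank(G/H) \geq n$ whenever $G$ admits a surjection onto $\Z^n$, and taking the supremum yields $\corank G \leq \corank(G/H)$ (including the case $\corank G = \infty$, which is handled by the same argument applied to each finite $n$).

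The only subtlety, which is not really an obstacle, is to invoke the fact that subgroups of free abelian groups are free abelian to identify $\pi(H) \cong \Z^r$; after that, the conclusion is immediate from the definition of corank.
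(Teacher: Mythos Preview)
Your proof is correct and follows essentially the same approach as the paper's: both show $H \subseteq \ker\pi$ by using that subgroups of free abelian groups are free abelian and then invoking $\corank H = 0$. The only cosmetic difference is that you look at $\pi(H)$ directly, while the paper passes through the second isomorphism theorem $(H+N)/N \cong H/(H\cap N)$ with $N = \ker\pi$; since $\pi(H) \cong H/(H\cap N)$, this is the same argument in slightly different clothing.
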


\begin{proof}
Since the quotient homomorphism $q : G \to G/N$ is surjective, we see that any surjective homomorphism $\pi : G/N \to \Z^n$ may be precomposed with $q$ to obtain a surjective homomorphism $\pi \circ q : G \to \Z^n$.  Hence $\corank (G/H) \leq \corank G$.

To obtain the inequality in the other direction, suppose that $n \in \Z^+$ and $\pi : G \to \Z^n$ is a surjective homomorphism.  If $N := \ker \pi$, then $G/N \cong \Z^n$.  Since $(H + N)/N$ is a subgroup of $G/N$, and subgroups of free abelian groups are free abelian, $(H+N)/N$ is free abelian.  Since $(H+N)/N \cong H / (H \cap N)$, we have that $ H / (H \cap N)$ is a free abelian group.  Since $\corank H = 0$, it follows from \eqref{corank-free-quotient-eq} that $H$ contains no nonzero free quotients.  Hence $H / (H \cap N) = 0$, and $H \cap N = H$, so that $H \subseteq N$.  Since $(G/H) / (N/H) \cong G/N \cong \Z^n$, there is a surjective homomorphism from $G/H$ onto $\Z^n$.  It follows that $\corank G \leq \corank (G/H)$.
\end{proof}

\begin{lemma} \label{corank-direct-sum-one-zero-lem}
If $G_1$ and $G_2$ are abelian groups and $\corank G_2 = 0$, then $\corank (G_1 \oplus G_2) = \corank G_1$.
\end{lemma}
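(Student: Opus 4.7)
The plan is to deduce the lemma as an immediate consequence of Proposition~\ref{quotient-zero-no-change-prop}. Identify $G_2$ with the subgroup $\{0\} \oplus G_2$ of $G_1 \oplus G_2$. Since this subgroup is isomorphic to $G_2$, Proposition~\ref{corank-elem-facts-prop}(i) gives $\corank(\{0\} \oplus G_2) = \corank G_2 = 0$, so the hypothesis of Proposition~\ref{quotient-zero-no-change-prop} is satisfied.

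Next, observe that the quotient map $(G_1 \oplus G_2) \to (G_1 \oplus G_2)/(\{0\} \oplus G_2)$ induces an isomorphism onto $G_1$ via $(g_1, g_2) + (\{0\} \oplus G_2) \mapsto g_1$. Hence by Proposition~\ref{corank-elem-facts-prop}(i) we have $\corank\bigl((G_1 \oplus G_2)/(\{0\} \oplus G_2)\bigr) = \corank G_1$.

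Finally, applying Proposition~\ref{quotient-zero-no-change-prop} with $G = G_1 \oplus G_2$ and $H = \{0\} \oplus G_2$ yields
\[
\corank (G_1 \oplus G_2) \;=\; \corank\bigl((G_1 \oplus G_2)/(\{0\} \oplus G_2)\bigr) \;=\; \corank G_1,
\]
which is the desired conclusion. There is no real obstacle here; the entire argument is a direct invocation of the previous proposition once one recognizes that the quotient of $G_1 \oplus G_2$ by the second summand is $G_1$. The only minor care needed is to note that $\{0\} \oplus G_2 \cong G_2$ as abelian groups, so that the corank-zero hypothesis transfers to the subgroup of the direct sum.
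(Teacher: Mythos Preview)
Your proof is correct and follows essentially the same approach as the paper's own proof: both apply Proposition~\ref{quotient-zero-no-change-prop} with $H = \{0\} \oplus G_2$ and use the isomorphism $(G_1 \oplus G_2)/(\{0\} \oplus G_2) \cong G_1$. Your version is slightly more explicit about invoking Proposition~\ref{corank-elem-facts-prop}(i) for the isomorphism-invariance of corank, but the argument is the same.
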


\begin{proof}
Since $\corank (0 \oplus G_2) = \corank G_2 = 0$, Proposition~\ref{quotient-zero-no-change-prop} implies that $$\corank (G_1 \oplus G_2) = \corank ( (G_1 \oplus G_2) / (0 \oplus G_2) ) = \corank (G_1 \oplus 0) = \corank G_1.$$
\end{proof}

\begin{lemma} \label{split-off-Zn-corank-zero-lem}
If $\corank G = n < \infty$, then $G \cong \Z^n \oplus H$ for an abelian group $H$ with $\corank H = 0$.
\end{lemma}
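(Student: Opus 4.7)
The plan is to produce the splitting by first picking a witness surjection $\pi \colon G \twoheadrightarrow \Z^n$ for $\corank G = n$, and then show that the kernel of $\pi$ is automatically forced to have corank $0$.

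\textbf{Step 1 (Splitting off a free summand).} Since $\corank G = n < \infty$, Definition~\ref{corank-def} guarantees a surjective homomorphism $\pi \colon G \to \Z^n$. The group $\Z^n$ is free abelian, hence projective, so the short exact sequence
\begin{equation*}
0 \longrightarrow \ker \pi \longrightarrow G \xrightarrow{\;\pi\;} \Z^n \longrightarrow 0
\end{equation*}
splits. Set $H := \ker \pi$; then $G \cong \Z^n \oplus H$ as abelian groups.

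\textbf{Step 2 (The kernel has corank zero).} I would argue by contradiction. Suppose $\corank H \geq 1$. Then there exists a surjective homomorphism $\sigma \colon H \to \Z$. Combining this with the identity on the $\Z^n$ summand yields a surjective homomorphism
\begin{equation*}
\mathrm{id}_{\Z^n} \oplus \sigma \colon \Z^n \oplus H \longrightarrow \Z^n \oplus \Z \cong \Z^{n+1},
\end{equation*}
and transporting through the isomorphism $G \cong \Z^n \oplus H$ gives a surjective homomorphism $G \twoheadrightarrow \Z^{n+1}$. This contradicts $\corank G = n$, so we must have $\corank H = 0$.

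\textbf{Anticipated obstacle.} There is really no substantive obstacle here: the only two ingredients are the projectivity (equivalently, freeness) of $\Z^n$, which makes the sequence split, and the observation that a nonzero free-abelian quotient of $H$ would combine with $\Z^n$ to enlarge the corank of $G$. The only mild care needed is that the supremum in Definition~\ref{corank-def} is actually attained when $\corank G = n < \infty$, which is what justifies choosing an explicit surjection $\pi \colon G \to \Z^n$ in Step~1; this is immediate from the definition of the supremum over non-negative integers.
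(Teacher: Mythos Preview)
Your proof is correct and follows essentially the same approach as the paper: split off $\Z^n$ using projectivity of free abelian groups, then argue that a nonzero free quotient of $H$ would produce a surjection $G \twoheadrightarrow \Z^{n+1}$. The paper's Step~2 is phrased slightly differently---it first observes $\corank H < \infty$, sets $m := \corank H$, and splits off $\Z^m$ from $H$ to get $G \cong \Z^{n+m} \oplus N$, forcing $m=0$---but your direct contradiction from $\corank H \geq 1$ is a minor streamlining of the same idea, not a different argument.
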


\begin{proof}
By the definition of $\corank$ there exists a surjective homomorphism $\pi : G \to \Z^n$.  Let $H := \ker \pi$.  Then $G/H \cong \Z^n$.  Since $G/H$ is a free abelian group, the short exact sequence $0 \to H \to G \to G/H \to 0$ splits and $G \cong G/H \oplus H \cong \Z^n \oplus H$.

Since $G \cong \Z^n \oplus H$, there is a surjective homomorphism from $G$ onto $H$, and the fact that $\corank G < \infty$ implies $\corank H < \infty$.  Let $m = \corank H < \infty$.  Since $m$ is finite there exists a surjective homomorphism $\pi' : H \to \Z^m$.  As above, if we let $N := \ker \pi'$, then $H/N \cong \Z^m$, and since $H/N$ is a free abelian group, the short exact sequence $0 \to N \to H \to H/N \to 0$ splits and $H \cong H/N \oplus N \cong \Z^m \oplus N$.  Thus $G \cong  \Z^n \oplus H \cong \Z^n \oplus \Z^m \oplus N$, and there is a surjective homomorphism from $G$ onto $\Z^{n+m}$.  Hence $n + m \leq \corank G = n$, and since $m$ and $n$ are non-negative, we conclude that $m=0$.
\end{proof}

\begin{proposition} \label{corank-direct-sum-prop}
If $G_1$ and $G_2$ are abelian groups, then $$\corank (G_1 \oplus G_2) = \corank G_1 +  \corank G_2.$$
\end{proposition}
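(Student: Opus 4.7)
The plan is to split on whether the two coranks are finite and then reduce everything to the two immediately preceding lemmas. I adopt the usual convention that $\infty + a = \infty$ for $a \in \{0,1,2,\ldots,\infty\}$.

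First I would dispose of the case in which $\corank G_1 = \infty$ (the case $\corank G_2 = \infty$ being symmetric). For each $k \in \Z^+$, any surjective homomorphism $G_1 \to \Z^k$ composed with the canonical projection $G_1 \oplus G_2 \to G_1$ yields a surjective homomorphism $G_1 \oplus G_2 \to \Z^k$, so $\corank(G_1 \oplus G_2) \geq k$ for every $k$, and hence $\corank(G_1 \oplus G_2) = \infty$. The right-hand side is also $\infty$ under our convention, so equality holds.

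Second I would handle the case when both $\corank G_1 = n$ and $\corank G_2 = m$ are finite. By Lemma~\ref{split-off-Zn-corank-zero-lem}, I may decompose $G_1 \cong \Z^n \oplus H_1$ and $G_2 \cong \Z^m \oplus H_2$ with $\corank H_1 = \corank H_2 = 0$. Rearranging the direct summands gives
\[
G_1 \oplus G_2 \;\cong\; \Z^{n+m} \oplus (H_1 \oplus H_2).
\]
Applying Lemma~\ref{corank-direct-sum-one-zero-lem} once, with $H_2$ in the ``corank-zero'' slot, shows $\corank(H_1 \oplus H_2) = \corank H_1 = 0$. A second application of Lemma~\ref{corank-direct-sum-one-zero-lem}, now with $H_1 \oplus H_2$ in the corank-zero slot, together with Proposition~\ref{corank-elem-facts-prop}(ii), gives $\corank(G_1 \oplus G_2) = \corank \Z^{n+m} = n+m = \corank G_1 + \corank G_2$.

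I do not expect a serious obstacle: the preceding structural results do all the heavy lifting. The only mild subtlety is verifying that the two iterated applications of Lemma~\ref{corank-direct-sum-one-zero-lem} are legitimate, which they are because after the first application the direct summand $H_1 \oplus H_2$ itself has corank zero. No new ideas are required beyond observing that Lemma~\ref{split-off-Zn-corank-zero-lem} has already reduced the problem to absorbing corank-zero summands.
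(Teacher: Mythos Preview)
Your proof is correct and follows essentially the same approach as the paper: split on whether a corank is infinite, and in the finite case apply Lemma~\ref{split-off-Zn-corank-zero-lem} followed by repeated use of Lemma~\ref{corank-direct-sum-one-zero-lem}. The only cosmetic difference is that the paper strips off $H_1$ and $H_2$ one at a time, whereas you first observe $\corank(H_1\oplus H_2)=0$ and then remove the combined summand in a single application; these are equivalent.
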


\begin{proof}
If $\corank G_1 = \infty$, then for every $n \in \N$ there exists a surjective homomorphism $\pi : G_1 \to \Z^n$.  If we precompose with the projection onto the first coordinate, $\pi_1 : G_1 \oplus G_2 \to G_1$ given by $\pi_1(g_1,g_2) := g_1$, then $\pi \circ \pi_1 : G_1 \oplus G_2 \to \Z^n$ is surjective.  It follows that $\corank (G_1 \oplus G_2) = \infty$.  Thus $\corank (G_1 \oplus G_2) = \infty = \infty+ \corank G_2 = \corank G_1 + \corank G_2$.

A similar argument shows that if $\corank G_2 = \infty$, then $\corank (G_1 \oplus G_2) = \infty$ and $\corank (G_1 \oplus G_2) = \corank G_1 + \corank G_2$.

If $\corank G_1 = n_1 < \infty$ and $\corank G_2 = n_2 < \infty$, then Lemma~\ref{split-off-Zn-corank-zero-lem} implies that $G_1 \cong \Z^{n_1} \oplus H_1$ and $G_2 \cong \Z^{n_2} \oplus H_2$ for some abelian groups $H_1$ and $H_2$ with $\corank H_1 = \corank H_2 = 0$.  Thus
\begin{align*}
\corank (G_1 \oplus G_2) &= \corank ((\Z^{n_1} \oplus H_1) \oplus (\Z^{n_2} \oplus H_2)) \\
&= \corank (\Z^{n_1}  \oplus \Z^{n_2} \oplus H_1 \oplus H_2) \\
&= \corank (\Z^{n_1}  \oplus \Z^{n_2} \oplus H_1) \qquad   \text{(by Lemma~\ref{corank-direct-sum-one-zero-lem})} \\
&= \corank (\Z^{n_1}  \oplus \Z^{n_2}) \qquad \qquad \text{(by Lemma~\ref{corank-direct-sum-one-zero-lem})} \\
&=n_1 + n_2 \\
&= \corank G_1 + \corank G_2.
\end{align*}
\end{proof}

\subsection{A Comparison of Rank and Corank}

We begin by computing the rank and corank of some groups to observe that their values do not always agree.

\begin{example} \label{compute-rank-corank-ex}
If $\Q$ denotes the abelian group of rational numbers with addition, then we see that $\corank \Q = 0$ since $\Q$ is divisible.  In addition, $\rank \Q \geq 1$ since $\Q$ is a not a torsion group, and for any set of two elements $\{ m, n \} \subseteq \Q$, we have $n(m) - m(n) = 0$ so that $\{ m, n \}$ is linearly dependent.  Hence $\rank \Q = 1$.

If $\R$ denotes the abelian group of real numbers with addition, then we see that $\corank \R = 0$ since $\R$ is divisible. For any prime $p$, the set of square roots of prime numbers up to $p$, namely $\{\sqrt{2}, \sqrt{3}, \sqrt{5}, \sqrt{7}, \sqrt{11}, \ldots, \sqrt{p}  \}$, is a linearly independent subset of $\R$, so $\rank \R = \infty$.

If we let $\prod_{i=1}^\infty \Z$ be the product of countably many copies of $\Z$, then for any $n \in \N$ there is an injection $\iota_n : \Z^n \to \prod_{i=1}^\infty \Z$ obtained by including into the first $n$ coordinates, and there is surjection $\pi_n : \prod_{i=1}^\infty \Z \to \Z^n$ obtained by projecting onto the first $n$ coordinates.  Hence $\rank \prod_{i=1}^\infty \Z = \corank \prod_{i=1}^\infty \Z = \infty$.

We display these results here for easy reference:
\begin{align*}
\corank \Q = 0  \qquad  \qquad \qquad & \rank \Q = 1 \\
\corank \R = 0  \qquad \qquad \qquad  & \rank \R = \infty \\
\corank \prod_{i=1}^\infty \Z = \infty  \qquad \qquad \qquad  & \rank \prod_{i=1}^\infty \Z = \infty
\end{align*}
\end{example}

\begin{example} \label{corank-not-exact-ex}
Let $\iota : \Z \to \Q$ be the inclusion map.  Note that $0 \to \Z \to \Q \to \Q/\Z \to 0$ is a short exact sequence.  However, $\corank \Z = 1$ and $\corank \Q = 0$, so that $\corank \Q \neq \corank \Z + \corank ( \Q / \Z)$.   Contrast this with the property of rank described in Proposition~\ref{rank-elem-facts-prop}(iv).
\end{example}

Although the above examples show that $\rank$ and $\corank$ do not agree in general, the following proposition shows that they do agree on finitely generated abelian groups and on free abelian groups.

\begin{proposition}
Let $G$ be an abelian group such that $G \cong T \oplus F$, where $T$ is a torsion group and $F$ is a free group.  Then $\rank G = \corank G = \rank F$. 
\end{proposition}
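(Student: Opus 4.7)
The plan is to reduce the three-way equality to two separate observations: (a) passing to the torsion-free summand $F$ does not change the value of either $\rank$ or $\corank$, and (b) for a free abelian group, rank and corank coincide. Both reductions are handled using the direct-sum and quotient properties already established in the excerpt.

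For step (a), I would first invoke Proposition~\ref{rank-elem-facts-prop}(v) together with Proposition~\ref{rank-elem-facts-prop}(iii) to conclude $\rank G = \rank T + \rank F = 0 + \rank F = \rank F$. Dually, Proposition~\ref{corank-direct-sum-prop} combined with Proposition~\ref{corank-elem-facts-prop}(iii) gives $\corank G = \corank T + \corank F = 0 + \corank F = \corank F$. At this point the theorem reduces to proving $\rank F = \corank F$ whenever $F$ is free abelian.

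For step (b), I would write $F \cong \bigoplus_{i \in I} \Z$ for some index set $I$ and split into two cases according to whether $|I|$ is finite or infinite. If $|I| = n < \infty$, then $F \cong \Z^n$, so Proposition~\ref{rank-elem-facts-prop}(ii) and (v) give $\rank F = n$, and Proposition~\ref{corank-elem-facts-prop}(ii) gives $\corank F = n$. If $|I|$ is infinite, I would exhibit, for every $n \in \N$, an injective homomorphism $\Z^n \hookrightarrow F$ (inclusion into any $n$ chosen coordinates) and a surjective homomorphism $F \twoheadrightarrow \Z^n$ (projection onto those same coordinates, killing the rest). Together with the characterization of $\rank$ in \eqref{rank-inj-eqn} and the definition of $\corank$, this forces $\rank F = \corank F = \infty$.

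There is essentially no substantive obstacle here; the proof is a bookkeeping exercise combining the elementary properties already catalogued in Propositions~\ref{rank-elem-facts-prop}, \ref{corank-elem-facts-prop}, and \ref{corank-direct-sum-prop}. The only place to exercise mild care is in the infinite-rank case, where one must remember that our convention lumps all infinite cardinalities into the single symbol $\infty$, so that it suffices to produce injections from (respectively, surjections onto) $\Z^n$ for arbitrarily large $n$ rather than to match cardinalities of bases exactly.
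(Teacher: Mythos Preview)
Your proposal is correct and follows essentially the same approach as the paper's proof: both first strip off the torsion summand using the additivity of $\rank$ and $\corank$ over direct sums (Propositions~\ref{rank-elem-facts-prop}, \ref{corank-elem-facts-prop}, \ref{corank-direct-sum-prop}), then handle the free abelian group $F \cong \bigoplus_{i\in I}\Z$ by cases on whether $|I|$ is finite or infinite. Your treatment of the infinite case is slightly more explicit (constructing the injections and surjections directly), but the argument is the same.
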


\begin{proof}
Proposition~\ref{rank-elem-facts-prop} implies that $\rank G = \rank T + \rank F = \rank F$.  Proposition~\ref{corank-direct-sum-prop} and Proposition~\ref{corank-elem-facts-prop} imply that $\corank G = \corank T + \corank F = \corank F$.  Since $F$ is a free abelian group, $F \cong \bigoplus_{i \in I} \Z$.  Thus $\rank F = |I|$ if $I$ is finite, and $\rank F = \infty$ if $I$ is infinite.  Likewise, $\corank F = |I|$ if $I$ is finite, and $\corank F = \infty$ if $I$ is infinite.  Hence, $\rank G = \rank F = \corank G$.
\end{proof}

The next proposition gives further insight into the relationship between $\rank$ and $\corank$ for general abelian groups.  It also shows that the problem of finding an abelian group with unequal rank and corank is tantamount to finding an abelian group with corank zero and nonzero rank.

\begin{proposition} \label{corank-smaller-than-rank-prop}
If $G$ is an abelian group, then $\corank G \leq \rank G$.  Furthermore, if $\rank G \neq \corank G$, then $\corank G < \infty$ and $G \cong \Z^n \oplus H$, where $n = \corank G$ and $H$ is an abelian group with $\corank H = 0$ and $\rank H = \rank G - n$.
\end{proposition}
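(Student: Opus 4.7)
The plan is to reduce both assertions to the splitting lemma for free abelian quotients together with the additivity of rank on direct sums (Proposition~\ref{rank-elem-facts-prop}(v)) and Lemma~\ref{split-off-Zn-corank-zero-lem}. The key observation underlying everything is that a surjection onto a free abelian group always splits, so any witness of a large corank automatically produces a witness of a large rank.

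For the inequality $\corank G \leq \rank G$, I would take a surjective homomorphism $\pi : G \to \Z^n$ (which exists for every $n \leq \corank G$). Because $\Z^n$ is free abelian, the short exact sequence $0 \to \ker\pi \to G \to \Z^n \to 0$ splits, giving $G \cong \Z^n \oplus \ker\pi$. Then Proposition~\ref{rank-elem-facts-prop}(v) and (ii) yield $\rank G = n + \rank \ker\pi \geq n$. Taking the supremum over all admissible $n$ (both the finite case and the case $\corank G = \infty$) gives $\corank G \leq \rank G$.

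For the ``furthermore'' part, I would prove the contrapositive of $\rank G \neq \corank G \Rightarrow \corank G < \infty$: if $\corank G = \infty$, then the argument of the previous paragraph shows $\rank G \geq n$ for every $n$, hence $\rank G = \infty = \corank G$. So assume $\corank G = n < \infty$. Apply Lemma~\ref{split-off-Zn-corank-zero-lem} to obtain a decomposition $G \cong \Z^n \oplus H$ with $\corank H = 0$. Finally, Proposition~\ref{rank-elem-facts-prop}(v) gives $\rank G = \rank \Z^n + \rank H = n + \rank H$, so $\rank H = \rank G - n$ (interpreted as $\infty$ when $\rank G = \infty$).

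No step presents a genuine obstacle, since all the nontrivial work is already packaged in Lemma~\ref{split-off-Zn-corank-zero-lem} and the additivity of rank; the mild care required is simply to handle the infinite cases of $\rank G$ and $\corank G$ consistently, which is why I would dispense with $\corank G = \infty$ first before invoking the splitting lemma in the finite-corank case.
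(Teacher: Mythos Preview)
Your proposal is correct and follows essentially the same approach as the paper: both arguments split a surjection $G \to \Z^n$ using freeness of $\Z^n$ to get $\rank G \geq n$, and then invoke Lemma~\ref{split-off-Zn-corank-zero-lem} together with additivity of rank for the decomposition. Your handling of the infinite-corank case via the contrapositive is slightly more explicit than the paper's one-line ``the previous paragraph implies that $\corank G$ is finite,'' but the content is identical.
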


\begin{proof}
For any $n \in \Z^+$, if there is a surjection $\pi : G \to \Z^n$, then $0 \to \ker \pi \to G \to \Z^n \to 0$ is a short exact sequence that splits (due to the fact $\Z^n$ is free) implying that $G \cong \Z^n \oplus \ker \pi$.  Hence, by Proposition~\ref{rank-elem-facts-prop}, $\rank G = n + \rank \ker \pi \geq n$.  This implies $\corank G \leq \rank G$.

If $\rank G \neq \corank G$, then the previous paragraph implies that $\corank G$ is finite.  Hence by Lemma~\ref{split-off-Zn-corank-zero-lem} we have $G \cong \Z^n \oplus H$ for an abelian group $H$ with $\corank H = 0$.  Thus $\rank G = n + \rank H$, and $\rank H = \rank G - n$.
\end{proof}

Proposition~\ref{corank-smaller-than-rank-prop} shows that to find abelian groups with unequal rank and corank, one needs to focus on finding abelian groups with zero corank and positive rank.

\begin{example}
If $0 \leq m < n \leq \infty$, we may define $G := \Z^m \oplus \Q^{n-m}$.  Then, using the computations from Example~\ref{compute-rank-corank-ex} we see that $\corank G = m + 0 = m$ and $\rank G = m + (n-m) = n$.  Thus for any $0 \leq m < n \leq \infty$ there exists an abelian group $G$ with $\corank G = m$ and $\rank G = n$.

Proposition~\ref{corank-smaller-than-rank-prop} shows we must have the corank of an abelian group less than or equal to the rank, but this example shows that for all values $0 \leq m < n \leq \infty$ there exists an abelian group $G$ with $\corank G = m$ and $\rank G = n$.  Moreover, Proposition~\ref{corank-smaller-than-rank-prop} implies that any such examples must be of the form $\Z^n \oplus H$ with $\corank H = 0$ and $\rank H \geq 1$.
\end{example}

\section{Size Functions on Abelian Groups}

\begin{definition} \label{size-function-def}
A \emph{size function} on the class of abelian groups is an assignment $$F : \Ab \to \Z^+ \cup \{ \infty \}$$ from the class of abelian groups $\Ab$ to the extended non-negative integers $\Z^+ \cup \{ \infty \} = \{0, 1, 2, \ldots, \infty \}$ satisfying the following conditions:
\begin{itemize}
\item[(1)] If $G_1$ and $G_2$ are abelian groups with $G_1 \cong G_2$, then $F(G_1) = F(G_2)$.
\item[(2)] If $G$ is a torsion group, then $F(G) = 0$.
\item[(3)] If $G$ is an abelian group and $H$ is a subgroup of $G$ with $F(H) = 0$, then $F(G/H) = F(G)$.
\item[(4)] If $G_1$ and $G_2$ are abelian groups, then $F(G_1 \oplus G_2) = F(G_1) + F(G_2)$.
\end{itemize}
\end{definition}

\begin{definition} \label{exact-size-function-def}
An \emph{exact size function} on the class of abelian groups is an assignment $$F : \Ab \to \Z^+ \cup \{ \infty \}$$ from the class of abelian groups $\Ab$ to the extended non-negative integers $\Z^+ \cup \{ \infty \} = \{0, 1, 2, \ldots, \infty \}$ satisfying the following conditions:
\begin{itemize}
\item[(1)] If $G_1$ and $G_2$ are abelian groups with $G_1 \cong G_2$, then $F(G_1) = F(G_2)$.
\item[(2)] If $G$ is a torsion group, then $F(G) = 0$.
\item[(3)] If $P$, $Q$, and $R$ are abelian groups and $0 \to P \to Q \to R \to 0$ is an exact sequence, then $F(Q) = F(P) + F(R)$.
\end{itemize}
\end{definition}

\begin{remark}
We point out that in both Definition~\ref{size-function-def} and Definition~\ref{exact-size-function-def} the domain of the assignment is a class (and not a set).  Thus, despite the name, size functions and exact size functions are technically assignments and not functions.
\end{remark}

Observe that \textit{a priori} properties (3) and (4) of Definition~\ref{size-function-def} are not required to hold for an exact size function.  The following proposition shows that, despite this, they do follow.

\begin{proposition} \label{exact-are-size-fcts-prop}
Any exact size function is also a size function.
\end{proposition}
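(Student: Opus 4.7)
The plan is straightforward: given an exact size function $F$, I need to verify the two extra conditions in Definition~\ref{size-function-def} that are not already part of the exact size function axioms, namely property (3) (quotienting by a subgroup of size zero preserves size) and property (4) (additivity on direct sums). Both will follow by applying the exactness axiom of Definition~\ref{exact-size-function-def} to carefully chosen short exact sequences.

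First I would dispose of property (4). The canonical split short exact sequence
\[
0 \longrightarrow G_1 \longrightarrow G_1 \oplus G_2 \longrightarrow G_2 \longrightarrow 0,
\]
where the first map is inclusion into the first coordinate and the second is projection onto the second coordinate, is exact, so property (3) of Definition~\ref{exact-size-function-def} gives $F(G_1 \oplus G_2) = F(G_1) + F(G_2)$, which is exactly property (4) of Definition~\ref{size-function-def}.

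Next I would handle property (3). Suppose $G$ is an abelian group and $H$ is a subgroup of $G$ with $F(H) = 0$. The short exact sequence
\[
0 \longrightarrow H \longrightarrow G \longrightarrow G/H \longrightarrow 0,
\]
where the first map is inclusion and the second is the quotient map, is exact, so applying property (3) of Definition~\ref{exact-size-function-def} yields $F(G) = F(H) + F(G/H) = 0 + F(G/H) = F(G/H)$. This is property (3) of Definition~\ref{size-function-def}.

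Finally, properties (1) and (2) of Definition~\ref{size-function-def} coincide verbatim with properties (1) and (2) of Definition~\ref{exact-size-function-def}, so there is nothing to verify. I don't anticipate any obstacles here; the proof is essentially a matter of writing down the two obvious short exact sequences and citing the exactness axiom.
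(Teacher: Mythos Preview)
Your proof is correct and follows essentially the same approach as the paper's: both verify properties (3) and (4) of Definition~\ref{size-function-def} by applying the exactness axiom to the quotient sequence $0 \to H \to G \to G/H \to 0$ and the split sequence $0 \to G_1 \to G_1 \oplus G_2 \to G_2 \to 0$, respectively. The only cosmetic difference is that the paper handles (3) before (4), while you do them in the opposite order.
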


\begin{proof}
It suffices to show that any exact size function satisfies properties (3) and (4) of Definition~\ref{size-function-def}.  To establish (3), let $G$ be an abelian group and let $H$ be a subgroup of $G$ with $F(H)= 0$.  Then $0 \to H \to G \to G/H \to 0$ is exact, and hence $F(G) = F(H) + F(G/H) = 0 + F(G/H) = F(G/H)$.  To establish (4), suppose $G_1$ and $G_2$ are abelian groups, and consider the exact sequence $0 \to G_1 \oplus 0 \to G_1 \oplus G_2 \to 0 \oplus G_2 \to 0$.  Then $F(G_1 \oplus G_2) = F(G_1 \oplus 0) + F(0 \oplus G_2) = F(G_1) + F(G_2)$.
\end{proof}

Although any exact size function is a size function, the converse does not hold (see Example~\ref{size-fct-not-exact-ex}).  However, any size function will satisfy the following special case of exactness.

\begin{lemma} \label{size-fct-limited-exactness-lem}
Let $F : \Ab \to \Z^+ \cup \{ \infty \}$ be a size function.  If $P$, $Q$, and $R$ are abelian groups, $0 \to P \to Q \to R \to 0$ is an exact sequence, and $F(P)=0$, then $F(Q) = F(R)$.
\end{lemma}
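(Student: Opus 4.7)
The plan is to prove this directly from the defining properties of a size function, using the exact sequence to identify $R$ with a quotient of $Q$ by a subgroup on which $F$ vanishes, and then invoking property (3) of Definition~\ref{size-function-def}. In particular, I will not need property (4), only properties (1) and (3).

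First I would unpack the exact sequence $0 \to P \to Q \to R \to 0$. Let $\iota : P \to Q$ denote the injection and let $\pi : Q \to R$ denote the surjection. Set $P' := \iota(P) \leq Q$. By the first isomorphism theorem applied to $\pi$, together with exactness ($\ker \pi = \operatorname{im} \iota = P'$), we obtain an isomorphism $Q/P' \cong R$.

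Next I would transfer the vanishing hypothesis: since $\iota$ is injective, $P' \cong P$, so property (1) of Definition~\ref{size-function-def} gives $F(P') = F(P) = 0$. Now I apply property (3) to the subgroup $P' \leq Q$: because $F(P') = 0$, we have $F(Q/P') = F(Q)$. Finally, applying property (1) once more to the isomorphism $Q/P' \cong R$ yields $F(Q/P') = F(R)$. Combining the last two equalities gives $F(Q) = F(R)$, as required.

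The only subtle point, if any, is making sure that property (3) is genuinely applicable here — and it is, essentially by design: property (3) is exactly the statement that quotienting by a subgroup of size zero preserves the size. Since every short exact sequence realizes the quotient group as $Q/P'$ for a subgroup $P' \cong P$, the lemma is an immediate rephrasing of property (3) via the isomorphism invariance in property (1). There are no real obstacles; the proof should occupy only a few lines.
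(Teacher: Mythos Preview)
Your proof is correct and follows essentially the same approach as the paper: identify the image $P' \cong P$ inside $Q$, use property~(1) to get $F(P')=0$, apply property~(3) to obtain $F(Q/P')=F(Q)$, and then property~(1) again with $Q/P'\cong R$ to conclude. The paper's argument is the same, just phrased a bit more tersely.
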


\begin{proof}
Due to the exactness of the sequence there is a subgroup $H$ of $Q$ such that $P \cong H$ and $Q/H \cong R$.  Thus, using the properties of a size function, we see that $F(H) = F(P) = 0$, and hence $F(R) = F(Q/H) = F(Q)$.
\end{proof}

The following proposition shows that on finitely generated abelian groups a size function is a constant multiple of the rank function.

\begin{proposition} \label{constant-multiple-rank-prop}
If $F : \Ab \to \Z^+ \cup \{ \infty \}$ is a size function, and $k := F(\Z)$, then $F(G) = k \rank G$ whenever $G$ is a finitely generated abelian group.
\end{proposition}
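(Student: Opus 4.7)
The plan is to invoke the structure theorem for finitely generated abelian groups to reduce the claim to computing $F$ on a torsion group and on $\Z^n$, and then use the defining properties of a size function to handle each piece.

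First, I would write any finitely generated abelian group $G$ as $G \cong T \oplus \Z^n$ where $T$ is a finite torsion group and $n \in \Z^+$. The key observation is that in this decomposition we have $n = \rank G$; this follows from Proposition~\ref{rank-elem-facts-prop}, since $\rank T = 0$ (as $T$ is torsion), $\rank \Z = 1$, and $\rank$ is additive over direct sums.

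Next, by property~(1) of Definition~\ref{size-function-def} we have $F(G) = F(T \oplus \Z^n)$, and by property~(4) this equals $F(T) + F(\Z^n)$. Property~(2) gives $F(T) = 0$, so we are reduced to showing $F(\Z^n) = nk$. This I would prove by a straightforward induction on $n$: the base case $n=0$ gives $F(\{0\}) = 0 = 0 \cdot k$ using property~(2) applied to the trivial group (which is torsion), and the inductive step uses property~(4) to write $F(\Z^{n+1}) = F(\Z^n \oplus \Z) = F(\Z^n) + F(\Z) = nk + k = (n+1)k$.

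Combining these steps gives $F(G) = 0 + nk = k \rank G$, as desired. There is no serious obstacle here; the proof is essentially a bookkeeping exercise assembling the four defining properties of a size function together with the classification of finitely generated abelian groups. The only point that warrants care is making sure the base case of the induction is handled correctly (by noting the trivial group is torsion, so has $F$-value zero), so that the formula $F(\Z^n) = nk$ holds uniformly for all $n \in \Z^+$.
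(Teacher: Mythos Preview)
Your proposal is correct and follows essentially the same approach as the paper's proof: both invoke the fundamental theorem of finitely generated abelian groups to write $G \cong \Z^n \oplus T$, then apply properties (1), (2), and (4) of Definition~\ref{size-function-def} to obtain $F(G) = n F(\Z) = k \rank G$. Your version is simply more explicit in justifying $n = \rank G$ and in spelling out the induction for $F(\Z^n) = nk$, whereas the paper treats these as immediate.
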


\begin{proof}
If $G$ is a finitely generated abelian group, then the fundamental theorem of finitely generated abelian groups implies $G \cong \Z^n \oplus T$ for some $n \in \Z^+$ and some torsion group $T$.  By properties (1), (2), and (4) of Definition~\ref{size-function-def}, $F(G) = F(\Z^n \oplus T) = F(\Z^n) + F(T) = F(\Z^n) = n F(\Z) = (\rank G) F(\Z) = k \rank G$.
\end{proof}

\begin{lemma} \label{exact-size-functions-subgroups-quotients-zero-lem}
If $F : \Ab \to \Z^+ \cup \{ \infty \}$ is an exact size function, $G$ is an abelian group with $F(G) = 0$, and $H$ is a subgroup of $G$, then $F(H) = 0$ and $F(G/H) = 0$.
\end{lemma}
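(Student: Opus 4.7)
The plan is to apply the exactness property of $F$ directly to the short exact sequence induced by the inclusion $H \hookrightarrow G$, and then extract both conclusions from the fact that a sum of two elements in $\Z^+ \cup \{\infty\}$ equals $0$ only when both summands are $0$.

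Concretely, I would first observe that the inclusion $H \hookrightarrow G$ together with the quotient map $G \twoheadrightarrow G/H$ yields the short exact sequence
\[
0 \longrightarrow H \longrightarrow G \longrightarrow G/H \longrightarrow 0.
\]
Applying condition (3) of Definition~\ref{exact-size-function-def} to this sequence gives
\[
F(G) = F(H) + F(G/H).
\]
By hypothesis $F(G) = 0$, so $F(H) + F(G/H) = 0$.

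The final step is the arithmetic observation that $F(H)$ and $F(G/H)$ both lie in $\Z^+ \cup \{\infty\}$, and in this monoid the only way a sum can equal $0$ is if both summands equal $0$ (since $\infty + n = \infty$ for all $n$, and any strictly positive integer added to a non-negative quantity stays strictly positive). Therefore $F(H) = 0$ and $F(G/H) = 0$, which is the desired conclusion. There is essentially no obstacle here; the lemma is a direct consequence of exactness combined with the non-negativity of the values of $F$, and its role in the paper is presumably to serve as a convenient auxiliary fact for later manipulations with exact size functions (for instance, verifying hypotheses in the classification results announced in the introduction).
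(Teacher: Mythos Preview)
Your proof is correct and follows essentially the same approach as the paper: both apply the exactness property of $F$ to the short exact sequence $0 \to H \to G \to G/H \to 0$ and then use non-negativity of the values to conclude that both summands in $F(H) + F(G/H) = 0$ must vanish.
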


\begin{proof}
Since the sequence $0 \to H \to G \to G/H \to 0$ is exact, we have $F(G) = F(H) + F(G/H)$.  However, since $F(G) = 0$ and the values of $F(H)$ and $F(G/H)$ are non-negative, we must have $F(H) = 0$ and $F(G/H) = 0$.
\end{proof}

\begin{example}[Examples of Exact Size Functions]\label{example exact size functions}
Parts (i), (iii), and (iv) of Proposition~\ref{rank-elem-facts-prop} show that $\rank$ is an exact size function on the class of abelian groups.  Furthermore, in analogy with \eqref{rank-def-eqn}, one can generalize this function as follows:  If $\mathsf{k}$ is any field of characteristic $0$, then $\mathsf{k}$ may be viewed as a $\Z$-module and for any abelian group $G$, the tensor product $\mathsf{k} \otimes_\Z G$ is a vector space over $\mathsf{k}$.  We may then define
$$\rank_\mathsf{k} (G) := \dim_\mathsf{k} (\mathsf{k} \otimes_\Z G)$$
where $\dim_\mathsf{k}$ denotes the dimension as a $\mathsf{k}$-vector space.  

It is straightforward to verify properties (1) and (2) of Definition~\ref{exact-size-function-def}, and property (3) of Definition~\ref{exact-size-function-def} follows from the fact $\mathsf{k}$ is flat as a $\Z$-module.  (Recall that a $\Z$-module is flat if and only if it is torsion free.)  Thus $\rank_\mathsf{k}$ is an exact size function, and when $\mathsf{k} = \Q$ we recover the usual $\rank$ function.

Another example of an exact size function is
$$
F(G) = \begin{cases}
\infty &\text{if $G$ is not a torsion group} \\
0 &\text{if $G$ is a torsion group}.
\end{cases}
$$
In particular, $F( \Z ) = \infty$.
\end{example}

\begin{example}[Examples of Size Functions] \label{size-fct-not-exact-ex}
Parts (i) and (iii) of Proposition~\ref{corank-elem-facts-prop}, Proposition~\ref{quotient-zero-no-change-prop}, and Proposition~\ref{corank-direct-sum-prop} show that $\corank$ is a size function on the class of abelian groups, and Example~\ref{corank-not-exact-ex} shows that $\corank$ is not an exact size function.  If $X$ is torsion-free, then $F(G) : = \rank ( \operatorname{Hom} (G,X))$ is a size function.  We have also that $F(G) : = \rank ( \operatorname{Hom} (G,X))$ is an exact size function if $X$ is torsion-free and divisible.  However, if $X$ is an abelian group, then $F(G) := \rank ( \operatorname{Hom} (G,X))$ need not be a size function in general.  Moreover, if we take $F(G) := \rank ( \operatorname{Hom} (G,\Z))$, then $F(G) = \corank (G)$. 
\end{example}

\section{Size Functions and $K$-theory of unital Leavitt path algebras}

\subsection{Using exact size functions to determine the number of singular vertices}

\begin{theorem} \label{exact-size-fct-singular-from-K-groups-thm}
Suppose $E$ is a graph with finitely many vertices, $\mathsf{k}$ is a field, and $F : \Ab \to \Z^+ \cup \{ \infty \}$ is an exact size function (see Definition~\ref{exact-size-function-def}).  If $n \in \N$ is a natural number for which $F(K_n(\mathsf{k})) < \infty$, and $0 < F(K_{n-1}(\mathsf{k})) < \infty$, then $F(K_n (L_\mathsf{k}(E)) ) < \infty$ and
$$|E^0_\textnormal{sing}| = \frac{\left( F(K_n(\mathsf{k})) +  F(K_{n-1} (\mathsf{k})) \right)\rank K_0(L_\mathsf{k}(E)) - F(K_n (L_\mathsf{k}(E)))}{F(K_{n-1}(\mathsf{k}))}.$$
\end{theorem}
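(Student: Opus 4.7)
The plan is to combine the long exact sequence of Theorem~\ref{abc} with the explicit structural description from Theorem~\ref{finite-vertices-computation}, and to apply $F$ termwise. Because $|E^0|<\infty$, Theorem~\ref{finite-vertices-computation} produces invariants $d_1,\dots,d_k \in \{2,3,\dots\}$ with $d_i\mid d_{i+1}$ and an integer $m\geq 0$, all determined by the Smith normal form of $\left(\begin{smallmatrix} B_E^t - I \\ C_E^t \end{smallmatrix}\right)$ and therefore independent of the $K$-level. In particular,
\[
K_0(L_\mathsf{k}(E)) \cong \Z_{d_1} \oplus \cdots \oplus \Z_{d_k} \oplus \Z^{m + |E^0_{\sing}|}, \qquad \rank K_0(L_\mathsf{k}(E)) = m + |E^0_{\sing}|,
\]
and extracting the short exact sequence at position $n$ from Theorem~\ref{abc} yields
\[
0 \longrightarrow P \longrightarrow K_n(L_\mathsf{k}(E)) \longrightarrow Q \longrightarrow 0,
\]
where Theorem~\ref{finite-vertices-computation} identifies
\begin{align*}
P &\cong \bigoplus_{i=1}^k K_n(\mathsf{k})/d_i K_n(\mathsf{k}) \;\oplus\; K_n(\mathsf{k})^{m + |E^0_{\sing}|}, \\
Q &\cong K_{n-1}(\mathsf{k})^m \;\oplus\; \bigoplus_{i=1}^k \ker\!\left((d_i):K_{n-1}(\mathsf{k}) \to K_{n-1}(\mathsf{k})\right).
\end{align*}

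The key computational step is to show that for any abelian group $A$ and any $d\geq 2$, both $A/dA$ and $\ker((d):A\to A)$ are $F$-trivial. The kernel is a torsion group, since every element is killed by $d$, so property~(2) of Definition~\ref{exact-size-function-def} forces $F(\ker((d):A\to A))=0$. Applying exactness of $F$ first to $0 \to \ker((d):A\to A) \to A \to dA \to 0$ yields $F(dA) = F(A)$, and then applying it to $0 \to dA \to A \to A/dA \to 0$ gives $F(A/dA) = F(A) - F(dA) = 0$ whenever $F(A)<\infty$, which is the case for both $A = K_n(\mathsf{k})$ and $A = K_{n-1}(\mathsf{k})$ by hypothesis. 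Combined with additivity of $F$ over direct sums (which follows from exactness by Proposition~\ref{exact-are-size-fcts-prop}), these vanishings give
\[
F(P) = (m + |E^0_{\sing}|)\,F(K_n(\mathsf{k})), \qquad F(Q) = m\,F(K_{n-1}(\mathsf{k})),
\]
both finite. Exactness of $F$ applied to the sequence $0 \to P \to K_n(L_\mathsf{k}(E)) \to Q \to 0$ then yields
\[
F(K_n(L_\mathsf{k}(E))) = (m + |E^0_{\sing}|)\,F(K_n(\mathsf{k})) + m\,F(K_{n-1}(\mathsf{k})) < \infty.
\]

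To finish, expand
\[
\bigl(F(K_n(\mathsf{k})) + F(K_{n-1}(\mathsf{k}))\bigr)\rank K_0(L_\mathsf{k}(E)) = (m + |E^0_{\sing}|)F(K_n(\mathsf{k})) + (m + |E^0_{\sing}|)F(K_{n-1}(\mathsf{k}))
\]
and subtract the formula for $F(K_n(L_\mathsf{k}(E)))$: the $K_n(\mathsf{k})$-contributions cancel, leaving $|E^0_{\sing}|\,F(K_{n-1}(\mathsf{k}))$. Dividing by the positive finite quantity $F(K_{n-1}(\mathsf{k}))$ recovers the announced formula. The main delicacy I anticipate is purely bookkeeping: justifying the subtractions each time exactness of $F$ is invoked, which is guaranteed at every step by the finiteness of $F(K_n(\mathsf{k}))$, $F(K_{n-1}(\mathsf{k}))$, and $|E^0|$. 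Once the vanishing $F(A/dA)=0$ is in hand, the remainder is a direct computation off the structural description in Theorem~\ref{finite-vertices-computation}.
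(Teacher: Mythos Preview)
Your proposal is correct and follows essentially the same route as the paper: extract the short exact sequence from Theorem~\ref{abc}, use the Smith normal form description of Theorem~\ref{finite-vertices-computation} to compute $F$ on the cokernel and kernel, and then apply exactness of $F$.

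One simplification worth noting: your argument that $F(A/dA)=0$ via two applications of exactness (and hence requiring $F(A)<\infty$ to subtract) is unnecessary. The quotient $A/dA$ is itself a torsion group, since every element is annihilated by $d\geq 2$; so property~(2) of Definition~\ref{exact-size-function-def} gives $F(A/dA)=0$ directly, with no finiteness hypothesis and no subtraction needed. This is how the paper handles it, and it removes precisely the ``delicacy'' you anticipated.
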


\begin{proof}
The long exact sequence of Theorem~\ref{abc} induces the short exact sequence
 \begin{align}
0 \longrightarrow \coker &\left( \left(  \begin{smallmatrix} B_E ^t - I \\ C_E ^t \end{smallmatrix} \right) : K_n (\mathsf{k})^{E^0_\textnormal{reg}} \to K_n (\mathsf{k})^{E^0} \right) \longrightarrow K_n (L_\mathsf{k}(E)) \notag \\
 & \qquad   \longrightarrow  \ker \left( \left(  \begin{smallmatrix} B_E ^t - I \\ C_E ^t \end{smallmatrix} \right) : K_{n-1} (\mathsf{k})^{E^0_\textnormal{reg}} \to K_{n-1} (\mathsf{k})^{E^0} \right) \longrightarrow 0. \label{ses-exact-size-fct-eq}
\end{align} 
Since $F$ is an exact size function, it follows from Proposition~\ref{exact-are-size-fcts-prop} that $F$ is also a size function and satisfies the properties listed in Definition~\ref{size-function-def}.  

Theorem~\ref{finite-vertices-computation} implies that there exist $d_1, \ldots, d_k \in \{2, 3, \ldots \}$ and $m \in \Z^+$ such that
\begin{align*} &\coker \left( \left( \begin{smallmatrix} B_E ^t - I \\ C_E ^t \end{smallmatrix} \right) : K_n (\mathsf{k})^{E^0 _\reg} \to K_n (\mathsf{k})^{E^0} \right) \\ 
& \qquad  \cong \bigslant{K_n (\mathsf{k})}{ \langle d_1 x : x \in K_n (\mathsf{k}) \rangle } \oplus \cdots \oplus \bigslant{K_n (\mathsf{k})}{ \langle d_k x : x \in K_n (\mathsf{k}) \rangle } \oplus K_n(\mathsf{k})^{m+|E^0 _\sing|} \end{align*}
and
\begin{align*} &\ker \left( \left( \begin{smallmatrix} B_E ^t - I \\ C_E ^t \end{smallmatrix} \right) : K_{n-1} (\mathsf{k})^{E^0 _\reg} \to K_{n-1} (\mathsf{k})^{E^0} \right) \\ 
& \qquad \qquad \qquad \qquad \cong K_{n-1} (\mathsf{k})^m \oplus \left( \bigoplus_{i=1} ^k \ker ((d_i):K_{n-1} (\mathsf{k}) \to K_{n-1} (\mathsf{k})) \right) \end{align*}
and furthermore, $m$ satisfies $\rank K_0(L_\mathsf{k}(E)) = m+ |E^0_\textnormal{sing} |.$
We may now use the fact that $F$ breaks up over direct sums to evaluate $F$ on the cokernel and kernel.  Since $K_n (\mathsf{k}) /  \langle d_i x : x \in K_n (\mathsf{k}) \rangle$ is a torsion group for all $1 \leq i \leq k$, the size function $F$ assigns a value of zero to these groups, and since $E^0$ is finite, $K_n(\mathsf{k})^{m+|E^0 _\sing|}$ is a finite direct sum and $F\left(K_n(\mathsf{k})^{m+|E^0 _\sing|}\right) = (m+|E^0 _\sing| ) F(K_n(\mathsf{k})) = (\rank K_0(L_\mathsf{k}(E)))  F(K_n(\mathsf{k}))$.  Thus
\begin{equation} \label{coker-comp-exact-size-eq}
F \left( \coker \left( \left( \begin{smallmatrix} B_E ^t - I \\ C_E ^t \end{smallmatrix} \right) : K_n (\mathsf{k})^{E^0 _\reg} \to K_n (\mathsf{k})^{E^0} \right) \right) =  \left( \rank K_0(L_\mathsf{k}(E)) \right)  F(K_n(\mathsf{k})).
\end{equation}
In addition, since $\ker ((d_i):K_n (\mathsf{k}) \to K_n (\mathsf{k}))$ is a torsion group for all $1 \leq i \leq k$, the size function $F$ assigns a value of zero to these groups, and since $m$ is finite, $F(K_{n-1} (\mathsf{k})^m) = m F(K_{n-1} (\mathsf{k})) = (\rank K_0(L_\mathsf{k}(E)) - |E^0_\textnormal{sing} |) F(K_{n-1} (\mathsf{k}))$.  Thus
\smallskip
\begin{equation}  \label{ker-comp-exact-size-eq}
\scalebox{.85}{$
F \left( \ker \left( \left( \begin{smallmatrix} B_E ^t - I \\ C_E ^t \end{smallmatrix} \right) : K_{n-1} (\mathsf{k})^{E^0 _\reg} \to K_{n-1} (\mathsf{k})^{E^0} \right) \right) = (\rank K_0(L_\mathsf{k}(E)) - |E^0_\textnormal{sing} |) F(K_{n-1} (\mathsf{k})).$}
\end{equation}
\smallskip
Since $F$ is an exact size function, we may use \eqref{ses-exact-size-fct-eq}, together with \eqref{coker-comp-exact-size-eq} and \eqref{ker-comp-exact-size-eq}, to deduce
\begin{align*}
F(K_n &(L_\mathsf{k}(E))) \\
&=  \left( \rank K_0(L_\mathsf{k}(E)) \right)  
F(K_n(\mathsf{k})) + (\rank K_0(L_\mathsf{k}(E)) - |E^0_\textnormal{sing} |) F(K_{n-1} (\mathsf{k})) \\
&=  \left( F(K_n(\mathsf{k})) +  F(K_{n-1} (\mathsf{k})) \right) \rank K_0(L_\mathsf{k}(E)) - |E^0_\textnormal{sing} | F(K_{n-1} (\mathsf{k})).
\end{align*}
Since $F(K_n(\mathsf{k})) < \infty$ and $F(K_{n-1}(\mathsf{k})) < \infty$ by hypothesis, and since $\rank K_{0} ( L_{ \mathsf{k}} (E) ) < \infty$, we have that $F(K_n (L_\mathsf{k}(E))) < \infty$.  Also, we obtain   
$$F(K_n (L_\mathsf{k}(E))) - \left( F(K_n(\mathsf{k})) +  F(K_{n-1} (\mathsf{k})) \right) \rank K_0(L_\mathsf{k}(E)) = - |E^0_\textnormal{sing} | F(K_{n-1} (\mathsf{k}))$$
and since $F(K_{n-1}(\mathsf{k})) > 0$ by hypothesis, we may divide to obtain
$$|E^0_\textnormal{sing}| = \frac{ \left( F(K_n(\mathsf{k})) +  F(K_{n-1} (\mathsf{k})) \right) \rank K_0(L_\mathsf{k}(E)) - F(K_n (L_\mathsf{k}(E)))}{F(K_{n-1}(\mathsf{k}))}.$$
\end{proof}

\begin{corollary} \label{K0-Kn-1-Kn-exact-Mor-Eq-cor}
Suppose $E$ and $F$ are simple graphs with finitely many vertices and infinitely many edges, and suppose that $\mathsf{k}$ is a field.  If there exist an exact  size function $F : \Ab \to \Z^+ \cup \{ \infty \}$ and a natural number $n \in \N$ for which $F(K_n(\mathsf{k})) < \infty$, and $0 < F(K_{n-1}(\mathsf{k})) < \infty$, then 
the following are equivalent:
\begin{itemize}
\item[(i)] $L_\textsf{k}(E)$ and $L_\textsf{k}(F)$ are Morita equivalent.
\item[(ii)] $K_0(L_\textsf{k}(E)) \cong K_0(L_\textsf{k}(F))$ and $K_n(L_\textsf{k}(E)) \cong K_n(L_\textsf{k}(F))$.
\end{itemize}
\end{corollary}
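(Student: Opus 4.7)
The direction (i) $\Rightarrow$ (ii) is immediate since algebraic $K$-theory is a Morita invariant. For (ii) $\Rightarrow$ (i) my plan is to reduce to the classification theorem \cite[Theorem~7.4]{rt}, which states that if $E$ and $F$ are countable graphs with finitely many vertices and infinitely many edges whose Leavitt path algebras are purely infinite simple, then $L_{\mathsf{k}}(E)$ and $L_{\mathsf{k}}(F)$ are Morita equivalent if and only if $K_0(L_{\mathsf{k}}(E)) \cong K_0(L_{\mathsf{k}}(F))$ and $|E^0_{\sing}| = |F^0_{\sing}|$. (Pure infinite simplicity follows from simplicity of the graph together with the presence of an infinite emitter, which is forced by the combination of finitely many vertices and infinitely many edges.) Since $K_0(L_{\mathsf{k}}(E)) \cong K_0(L_{\mathsf{k}}(F))$ is given in (ii), the task reduces to showing $|E^0_{\sing}| = |F^0_{\sing}|$.

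To carry this out, I would first rename the exact size function (called $F$ in the statement) by $\Phi$ so as not to collide with the graph $F$. Since both $E$ and $F$ have finitely many vertices and the hypotheses $\Phi(K_n(\mathsf{k})) < \infty$ and $0 < \Phi(K_{n-1}(\mathsf{k})) < \infty$ are in force, Theorem~\ref{exact-size-fct-singular-from-K-groups-thm} applies to each of them and yields
\[
|E^0_{\sing}| = \frac{\bigl(\Phi(K_n(\mathsf{k})) + \Phi(K_{n-1}(\mathsf{k}))\bigr)\rank K_0(L_{\mathsf{k}}(E)) - \Phi(K_n(L_{\mathsf{k}}(E)))}{\Phi(K_{n-1}(\mathsf{k}))}
\]
together with the analogous formula for $F$. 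From $K_0(L_{\mathsf{k}}(E)) \cong K_0(L_{\mathsf{k}}(F))$ we obtain $\rank K_0(L_{\mathsf{k}}(E)) = \rank K_0(L_{\mathsf{k}}(F))$, and from $K_n(L_{\mathsf{k}}(E)) \cong K_n(L_{\mathsf{k}}(F))$ property~(1) of Definition~\ref{exact-size-function-def} gives $\Phi(K_n(L_{\mathsf{k}}(E))) = \Phi(K_n(L_{\mathsf{k}}(F)))$. The right-hand sides of the two instances of the formula therefore agree, so $|E^0_{\sing}| = |F^0_{\sing}|$, and \cite[Theorem~7.4]{rt} finishes the argument.

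The main substantive content is already packaged in Theorem~\ref{exact-size-fct-singular-from-K-groups-thm}, so the only real obstacle is bookkeeping: confirming that the standing hypotheses of that theorem and of \cite[Theorem~7.4]{rt} (in particular pure infinite simplicity of $L_{\mathsf{k}}(E)$ and $L_{\mathsf{k}}(F)$, and finiteness of the ranks involved) are all met under the stated assumptions, and disambiguating the overloaded symbol $F$. Once these are handled, the argument is a direct computation turning the isomorphism of $K_0$- and $K_n$-groups into an equality of singular-vertex counts.
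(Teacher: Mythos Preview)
Your argument is correct and matches the paper's proof essentially line for line: both directions use Morita invariance of $K$-theory for (i)$\Rightarrow$(ii), and for (ii)$\Rightarrow$(i) apply Theorem~\ref{exact-size-fct-singular-from-K-groups-thm} to extract $|E^0_{\sing}|=|F^0_{\sing}|$ from the $K_0$- and $K_n$-isomorphisms before invoking \cite[Theorem~7.4]{rt}. Your additional remarks on disambiguating the symbol $F$ and on why pure infinite simplicity holds are helpful clarifications but do not alter the approach.
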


\begin{proof}
We obtain $(i) \implies (ii)$ from the fact that Morita equivalent algebras have  algebraic $K$-theory groups that are isomorphic.  For $(ii) \implies (i)$, we see that $(ii)$ combined with Theorem~\ref{exact-size-fct-singular-from-K-groups-thm} implies that $|E^0_\textnormal{sing} | = |F^0_\textnormal{sing}|$ and $K_0(L_\textsf{k}(E)) \cong K_0(L_\textsf{k}(F))$.  It follows from \cite[Theorem~7.4]{rt} that $L_\textsf{k}(E)$ and $L_\textsf{k}(F)$ are Morita equivalent.
\end{proof}

\begin{remark}
Since $\rank$ is an exact size function, both Theorem~\ref{exact-size-fct-singular-from-K-groups-thm} and Corollary~\ref{K0-Kn-1-Kn-exact-Mor-Eq-cor} apply when $F(-) = \rank(-)$.
\end{remark}

\begin{remark}
Note that in both Theorem~\ref{exact-size-fct-singular-from-K-groups-thm} and Corollary~\ref{K0-Kn-1-Kn-exact-Mor-Eq-cor} the value of $n=1$ is allowed.  Also note that since $K_0(L_\textsf{k}(F))$ is a finitely generated abelian group, we always have $F(K_0(L_\textsf{k}(F))) = F( \Z ) \rank K_0(L_\textsf{k}(F))$ for any size function $F$, by Proposition~\ref{constant-multiple-rank-prop}.
\end{remark}

\subsection{Using size functions to determine the number of singular vertices}

\begin{theorem} \label{size-fct-singular-from-K-groups-thm}
Suppose $E$ is a graph with finitely many vertices, $\mathsf{k}$ is a field, and $F : \Ab \to \Z^+ \cup \{ \infty \}$ is a size function (see Definition~\ref{size-function-def}).  If $n \in \N$ is a natural number for which $F(K_n(\mathsf{k})) = 0$, and $0 < F(K_{n-1}(\mathsf{k})) < \infty$, then  $F(K_n (L_\mathsf{k}(E)) ) < \infty$ and
$$|E^0_\textnormal{sing}| = \rank K_0(L_\mathsf{k}(E)) - \frac{F(K_{n} (L_\mathsf{k}(E)))}{F(K_{n-1} (\mathsf{k}))}.$$
\end{theorem}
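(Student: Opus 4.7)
The plan is to mimic the proof of Theorem~\ref{exact-size-fct-singular-from-K-groups-thm}, starting from the short exact sequence
\begin{align*}
0 \longrightarrow \coker &\left( \left(  \begin{smallmatrix} B_E ^t - I \\ C_E ^t \end{smallmatrix} \right) : K_n (\mathsf{k})^{E^0_\reg} \to K_n (\mathsf{k})^{E^0} \right) \longrightarrow K_n (L_\mathsf{k}(E)) \\
 & \qquad \longrightarrow  \ker \left( \left(  \begin{smallmatrix} B_E ^t - I \\ C_E ^t \end{smallmatrix} \right) : K_{n-1} (\mathsf{k})^{E^0_\reg} \to K_{n-1} (\mathsf{k})^{E^0} \right) \longrightarrow 0
\end{align*}
coming from Theorem~\ref{abc}, together with the explicit descriptions in Theorem~\ref{finite-vertices-computation}. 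The crucial new twist is that $F$ is only a size function, so I cannot invoke exactness in general. Instead, I will first evaluate $F$ on the cokernel, show it is $0$, and then apply Lemma~\ref{size-fct-limited-exactness-lem} to transfer $F$ across the short exact sequence.

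The first step is to compute $F$ on the cokernel. By Theorem~\ref{finite-vertices-computation} it decomposes as $\bigoplus_{i=1}^k K_n(\mathsf{k})/\langle d_i x : x\in K_n(\mathsf{k})\rangle \oplus K_n(\mathsf{k})^{m+|E^0_\sing|}$, where each quotient $K_n(\mathsf{k})/d_i K_n(\mathsf{k})$ is annihilated by $d_i$ and is therefore a torsion group; by property~(2) of Definition~\ref{size-function-def} each such summand has $F$-value $0$. Combined with the hypothesis $F(K_n(\mathsf{k})) = 0$ and property~(4), this gives $F$ of the cokernel equal to $0$. Similarly, using Theorem~\ref{finite-vertices-computation} the kernel decomposes as $K_{n-1}(\mathsf{k})^m \oplus \bigoplus_{i=1}^k \ker((d_i): K_{n-1}(\mathsf{k}) \to K_{n-1}(\mathsf{k}))$, and each $\ker((d_i))$ is killed by $d_i$, hence torsion, hence has $F$-value $0$. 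Property~(4) then yields $F(\ker) = m \cdot F(K_{n-1}(\mathsf{k}))$, which is finite since $m \leq |E^0_\reg| < \infty$ and $F(K_{n-1}(\mathsf{k})) < \infty$.

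Now I apply Lemma~\ref{size-fct-limited-exactness-lem} to the short exact sequence above: since $F$ of the cokernel is $0$, we conclude $F(K_n(L_\mathsf{k}(E))) = F(\ker) = m \cdot F(K_{n-1}(\mathsf{k}))$; in particular $F(K_n(L_\mathsf{k}(E)))<\infty$. Using the identity $\rank K_0(L_\mathsf{k}(E)) = m + |E^0_\sing|$ from Theorem~\ref{finite-vertices-computation} and dividing by the (finite, positive) quantity $F(K_{n-1}(\mathsf{k}))$ gives
$$|E^0_\textnormal{sing}| = \rank K_0(L_\mathsf{k}(E)) - \frac{F(K_n (L_\mathsf{k}(E)))}{F(K_{n-1}(\mathsf{k}))},$$
as desired.

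The only subtlety, and the step I expect to be the crux, is the cokernel computation: one needs to notice that although a size function is not exact and need not send subgroups of an $F$-zero group to $F$-zero groups, the quotients $K_n(\mathsf{k})/d_i K_n(\mathsf{k})$ are torsion \emph{regardless of the structure of $K_n(\mathsf{k})$}, so property~(2) (not property~(3)) is what handles them. Once that is observed, the rest is a direct translation of the proof of Theorem~\ref{exact-size-fct-singular-from-K-groups-thm}, with Lemma~\ref{size-fct-limited-exactness-lem} playing the role that exactness of $F$ played there.
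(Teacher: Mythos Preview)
Your proof is correct and follows essentially the same approach as the paper: both use the short exact sequence from Theorem~\ref{abc}, the decompositions of the kernel and cokernel from Theorem~\ref{finite-vertices-computation}, compute $F$ of the cokernel to be $0$ using that the quotients $K_n(\mathsf{k})/d_iK_n(\mathsf{k})$ are torsion together with the hypothesis $F(K_n(\mathsf{k}))=0$, and then invoke Lemma~\ref{size-fct-limited-exactness-lem} to obtain $F(K_n(L_\mathsf{k}(E)))=mF(K_{n-1}(\mathsf{k}))$.
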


\begin{proof}
The long exact sequence of Theorem~\ref{abc} induces the short exact sequence
 \begin{align}
0 \longrightarrow \coker &\left( \left(  \begin{smallmatrix} B_E ^t - I \\ C_E ^t \end{smallmatrix} \right) : K_n (\mathsf{k})^{E^0_\textnormal{reg}} \to K_n (\mathsf{k})^{E^0} \right) \longrightarrow K_n (L_\mathsf{k}(E)) \notag \\
 & \qquad   \longrightarrow  \ker \left( \left(  \begin{smallmatrix} B_E ^t - I \\ C_E ^t \end{smallmatrix} \right) : K_{n-1} (\mathsf{k})^{E^0_\textnormal{reg}} \to K_{n-1} (\mathsf{k})^{E^0} \right) \longrightarrow 0 \label{ses-general-size-fct-eq}
\end{align} 
and Theorem~\ref{finite-vertices-computation} implies that there exist $d_1, \ldots, d_k \in \{2, 3, \ldots \}$ and $m \in \Z^+$ such that
\begin{align*} &\coker \left( \left( \begin{smallmatrix} B_E ^t - I \\ C_E ^t \end{smallmatrix} \right) : K_n (\mathsf{k})^{E^0 _\reg} \to K_n (\mathsf{k})^{E^0} \right) \\ 
& \qquad  \cong \bigslant{K_n (\mathsf{k})}{ \langle d_1 x : x \in K_n (\mathsf{k}) \rangle } \oplus \cdots \oplus \bigslant{K_n (\mathsf{k})}{ \langle d_k x : x \in K_n (\mathsf{k}) \rangle } \oplus K_n(\mathsf{k})^{m+|E^0 _\sing|} \end{align*}
and
\begin{align*} &\ker \left( \left( \begin{smallmatrix} B_E ^t - I \\ C_E ^t \end{smallmatrix} \right) : K_{n-1} (\mathsf{k})^{E^0 _\reg} \to K_{n-1} (\mathsf{k})^{E^0} \right) \\ 
& \qquad \qquad \qquad \qquad \cong K_{n-1} (\mathsf{k})^m \oplus \left( \bigoplus_{i=1} ^k \ker ((d_i):K_{n-1} (\mathsf{k}) \to K_{n-1} (\mathsf{k})) \right) \end{align*}
and furthermore, $m$ satisfies $$\rank K_0(L_\mathsf{k}(E)) = m+ |E^0_\textnormal{sing} |.$$
We may now use the fact that $F$ breaks up over direct sums to evaluate $F$ on the cokernel and kernel.  Since $K_n (\mathsf{k}) /  \langle d_i x : x \in K_n (\mathsf{k}) \rangle$ is a torsion group for all $1 \leq i \leq k$, the size function $F$ assigns a value of zero to these groups, and since $E^0$ is finite and $F(K_n(\mathsf{k})) = 0$ by hypothesis, we may conclude that $F \left( K_n(\mathsf{k})^{m+|E^0 _\sing|} \right) = 0$.  Thus
\begin{equation} \label{coker-comp-general-size-eq}
F \left( \coker \left( \left( \begin{smallmatrix} B_E ^t - I \\ C_E ^t \end{smallmatrix} \right) : K_n (\mathsf{k})^{E^0 _\reg} \to K_n (\mathsf{k})^{E^0} \right) \right) =  0.
\end{equation}
In addition, since $\ker ((d_i):K_n (\mathsf{k}) \to K_n (\mathsf{k}))$ is a torsion group for all $1 \leq i \leq k$, the size function $F$ assigns a value of zero to these groups, and since $m$ is finite, $F(K_{n-1} (\mathsf{k})^m) = m F(K_{n-1} (\mathsf{k})) = (\rank K_0(L_\mathsf{k}(E)) - |E^0_\textnormal{sing} |) F(K_{n-1} (\mathsf{k}))$.  Thus
\smallskip
\begin{equation}  \label{ker-comp-general-size-eq}
\scalebox{.85}{$
F \left( \ker \left( \left( \begin{smallmatrix} B_E ^t - I \\ C_E ^t \end{smallmatrix} \right) : K_{n-1} (\mathsf{k})^{E^0 _\reg} \to K_{n-1} (\mathsf{k})^{E^0} \right) \right) = (\rank K_0(L_\mathsf{k}(E)) - |E^0_\textnormal{sing} |) F(K_{n-1} (\mathsf{k})).$}
\end{equation}
\smallskip
Using the short exact sequence in \eqref{ses-general-size-fct-eq}, Lemma~\ref{size-fct-limited-exactness-lem}, and the computations in \eqref{coker-comp-general-size-eq} and \eqref{ker-comp-general-size-eq}, we obtain $F(K_n (L_\mathsf{k}(E))) = (\rank K_0(L_\mathsf{k}(E)) - |E^0_\textnormal{sing} |) F(K_{n-1} (\mathsf{k}))$.  Moreover, this equation together with the hypothesis that $F(K_{n-1}(\mathsf{k})) < \infty$ and the fact that $\rank K_0(L_\mathsf{k}(E)) < \infty$ implies $F(K_n (L_\mathsf{k}(E))) < \infty$.  In addition, since $0 < F(K_{n-1}(\mathsf{k})) < \infty$ by hypothesis and since $\rank K_{0} ( L_{ \mathsf{k}} (E) ) < \infty$, we may divide to obtain
$$\frac{F(K_n (L_\mathsf{k}(E)))}{F(K_{n-1} (\mathsf{k}))} = \rank K_0(L_\mathsf{k}(E)) - |E^0_\textnormal{sing}|,$$
and $|E^0_\textnormal{sing}| = \rank K_0(L_\mathsf{k}(E)) - \frac{F(K_{n} (L_\mathsf{k}(E)))}{F(K_{n-1} (\mathsf{k}))}.$
\end{proof}

\begin{remark}
Although Theorem~\ref{exact-size-fct-singular-from-K-groups-thm} and Theorem~\ref{size-fct-singular-from-K-groups-thm} are similar, neither implies the other.  The hypotheses of Theorem~\ref{exact-size-fct-singular-from-K-groups-thm} require $F$ to be an exact size function, while Theorem~\ref{size-fct-singular-from-K-groups-thm} allows $F$ to be any size function.  Furthermore, the hypotheses of Theorem~\ref{size-fct-singular-from-K-groups-thm} require $F(K_n(\mathsf{k})) = 0$, while Theorem~\ref{exact-size-fct-singular-from-K-groups-thm} only requires $F(K_n(\mathsf{k}))$ to be finite.  Thus the hypotheses of Theorem~\ref{exact-size-fct-singular-from-K-groups-thm} impose stronger conditions on the properties of $F$, while the hypotheses of Theorem~\ref{size-fct-singular-from-K-groups-thm} impose stronger conditions on the value $F(K_n(\mathsf{k}))$.
\end{remark}

\begin{corollary} \label{K0-Kn-1-Kn-size-fct-Mor-Eq-cor}
Suppose $E$ and $F$ are simple graphs with finitely many vertices and an infinite number of edges, and suppose that $\mathsf{k}$ is a field.  If there exist a size function $F : \Ab \to \Z^+ \cup \{ \infty \}$ and a natural number $n \in \N$ for which $F(K_n(\mathsf{k})) =0$, and $0 < F(K_{n-1}(\mathsf{k})) < \infty$, then 
the following are equivalent:
\begin{itemize}
\item[(i)] $L_\textsf{k}(E)$ and $L_\textsf{k}(F)$ are Morita equivalent.
\item[(ii)] $K_0(L_\textsf{k}(E)) \cong K_0(L_\textsf{k}(F))$ and $K_n(L_\textsf{k}(E)) \cong K_n(L_\textsf{k}(F))$.
\end{itemize}
\end{corollary}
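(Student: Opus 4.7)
The plan is to follow essentially the same template as Corollary~\ref{K0-Kn-1-Kn-exact-Mor-Eq-cor}, with the only substantive change being to replace the appeal to Theorem~\ref{exact-size-fct-singular-from-K-groups-thm} by an appeal to its size-function analogue, Theorem~\ref{size-fct-singular-from-K-groups-thm}. Since the two statements are structurally identical (only the hypotheses on $F$ and on $F(K_n(\mathsf{k}))$ differ), the proof really only needs to verify that the ingredients line up.

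First I would establish the direction (i)$\Rightarrow$(ii): Morita equivalence preserves all algebraic $K$-theory groups, so in particular it yields isomorphisms $K_0(L_\mathsf{k}(E))\cong K_0(L_\mathsf{k}(F))$ and $K_n(L_\mathsf{k}(E))\cong K_n(L_\mathsf{k}(F))$. This is standard and requires no use of the hypothesis on the size function.

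For the direction (ii)$\Rightarrow$(i), I would apply Theorem~\ref{size-fct-singular-from-K-groups-thm} to each of $E$ and $F$. The hypothesis on $\mathsf{k}$ and on the natural number $n$ (namely $F(K_n(\mathsf{k}))=0$ and $0<F(K_{n-1}(\mathsf{k}))<\infty$) is exactly what Theorem~\ref{size-fct-singular-from-K-groups-thm} requires. Moreover, both $E$ and $F$ have only finitely many vertices by assumption, so the theorem applies and gives
\[
|E^0_{\sing}| = \rank K_0(L_\mathsf{k}(E)) - \frac{F(K_n(L_\mathsf{k}(E)))}{F(K_{n-1}(\mathsf{k}))},
\]
and analogously for $F^0_{\sing}$ in terms of $K_0(L_\mathsf{k}(F))$ and $K_n(L_\mathsf{k}(F))$. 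Since condition (ii) yields $K_0(L_\mathsf{k}(E))\cong K_0(L_\mathsf{k}(F))$ (hence equal ranks) and $K_n(L_\mathsf{k}(E))\cong K_n(L_\mathsf{k}(F))$ (hence equal $F$-values, by Definition~\ref{size-function-def}(1)), the two right-hand sides agree and we conclude $|E^0_{\sing}|=|F^0_{\sing}|$.

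To finish, I would invoke \cite[Theorem~7.4]{rt}, which says precisely that for simple Leavitt path algebras of graphs with finitely many vertices and infinitely many edges (so that the graphs are necessarily not finite, ensuring $|E^0_{\sing}|\geq 1$), the pair consisting of $K_0$ and the number of singular vertices is a complete Morita equivalence invariant. Since the two invariants agree, $L_\mathsf{k}(E)$ and $L_\mathsf{k}(F)$ are Morita equivalent, establishing (i). No step here is really an obstacle; the only conceptual content beyond bookkeeping is that Theorem~\ref{size-fct-singular-from-K-groups-thm} allows us to recover $|E^0_{\sing}|$ intrinsically from $K$-theoretic data, which has already been done. The proof therefore reduces to a short, two-paragraph verification.
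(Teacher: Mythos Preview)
Your proposal is correct and matches the paper's proof essentially line for line: both directions are handled identically, with (i)$\Rightarrow$(ii) via Morita invariance of $K$-theory and (ii)$\Rightarrow$(i) via Theorem~\ref{size-fct-singular-from-K-groups-thm} to recover $|E^0_{\sing}|=|F^0_{\sing}|$, followed by \cite[Theorem~7.4]{rt}. If anything, your write-up is slightly more detailed than the paper's.
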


\begin{proof}
We obtain $(i) \implies (ii)$ from the fact that Morita equivalent algebras have  algebraic $K$-theory groups that are isomorphic.  For $(ii) \implies (i)$, we see that $(ii)$ combined with Theorem~\ref{size-fct-singular-from-K-groups-thm} implies that $|E^0_\textnormal{sing} | = |F^0_\textnormal{sing}|$ and $K_0(L_\textsf{k}(E)) \cong K_0(L_\textsf{k}(F))$.  It follows from \cite[Theorem~7.4]{rt} that $L_\textsf{k}(E)$ and $L_\textsf{k}(F)$ are Morita equivalent.
\end{proof}

\begin{remark}
Since $\corank$ is a size function, Theorem~\ref{size-fct-singular-from-K-groups-thm} and Corollary~\ref{K0-Kn-1-Kn-size-fct-Mor-Eq-cor} apply when $F(-) = \corank(-)$. 
\end{remark}

\begin{remark}
Note that in both Theorem~\ref{size-fct-singular-from-K-groups-thm} and Corollary~\ref{K0-Kn-1-Kn-size-fct-Mor-Eq-cor} the value of $n=1$ is allowed.  Also note that since $K_0(L_\textsf{k}(F) ) $ is a finitely generated abelian group, we always have $F(K_0(L_\textsf{k}(F))) = F( \Z ) \rank K_0(L_\textsf{k}(F)) $ for any size function $F$, by Proposition~\ref{constant-multiple-rank-prop}.
\end{remark}

\begin{remark}
It was proven in \cite[Theorem~7.4]{rt} that if $E$ is a simple graph with a finite number of vertices and an infinite number of edges, then $(K_0(L_\textsf{k}(E)), |E^0_\textnormal{sing}|)$ is a complete Morita equivalence invariant for $L_\mathsf{k}(E)$.  Moreover, it was proven in \cite[Corollary~6.14]{rt} that if $\mathsf{k}$ is a field with no free quotients, then $|E^0_\textnormal{sing}|$ is determined by the pair $(K_0(L_\textsf{k}(E)), K_1(L_\textsf{k}(E)))$, and hence $(K_0(L_\textsf{k}(E)), K_1(L_\textsf{k}(E)))$ is a complete Morita equivalence invariant for $L_\mathsf{k}(E)$ in this case.  Since a field $\mathsf{k}$ has no free quotients if and only if the abelian group $K_1(\mathsf{k}) \cong \mathsf{k}^\times$ has no free quotients, we see that $\mathsf{k}$ has no free quotients if and only if $\corank K_1(\mathsf{k}) =0$.  Thus the result from \cite{rt} is a special case of Corollary~\ref{K0-Kn-1-Kn-size-fct-Mor-Eq-cor} when $n=1$ and $F(-) = \corank(-)$.
\end{remark}

\subsection{Number Fields}
A \emph{number field} is a finite field extension of $\Q$. (We note that, in particular, $\Q$ itself is considered a number field.)  If $\mathsf{k}$ is a number field of degree $n$ over $\Q$, then by the primitive element theorem we may write $\mathsf{k} = \Q (\alpha)$ for an element $\alpha$ of degree $n$.   If we let $p(x)$ be the minimal polynomial of $\alpha$, then since $\Q$ has characteristic zero, $p(x)$ is separable and we may factor the polynomial $p(x)$ into $n$ monomials with distinct roots.  These roots will appear as distinct real numbers together with distinct conjugate pairs, and we write 
$$p(x)=(x-\lambda_1) \ldots (x-\lambda_{r_1}) (x-\mu_1) (x-\overline{\mu}_1) \ldots (x-\mu_{r_2}) (x - \overline{\mu}_{r_2})$$
for distinct elements $\lambda_1, \ldots, \lambda_{r_1} \in \R$ and $\mu_1, \ldots, \mu_{r_2} \in \C \setminus \R$.  Moreover, if we let $q_i(x) := (x-\mu_i) (x-\overline{\mu}_i)$ for $1 \leq i \leq r_2$ be the degree 2 polynomial in $\R[x]$ with $\mu_i$ and $\overline{\mu}_i$ as roots, then $p(x) = (x-\lambda_1) \ldots (x-\lambda_{r_1}) q_1(x) \ldots q_{r_2} (x)$ is a factorization of $p(x)$ into irreducible factors over $\R$.  If we tensor $\mathsf{k}$ with $\R$, we may use the Chinese remainder theorem to obtain
\begin{align*}
\mathsf{k} &\otimes_\Q \R \\
&\cong \Q (\alpha) \otimes_\Q \R \\
&\cong (\Q [x] / \langle p(x) \rangle) \otimes_\Q \R \\
&\cong \R [x] / \langle p(x) \rangle \\
&\cong \R [x] / \langle (x-\lambda_1) \ldots (x-\lambda_{r_1}) q_1(x) \ldots q_{r_2}(x) \rangle \\
&\cong \R[x] /  \langle (x-\lambda_1) \rangle \times \ldots \times  \R[x] /  \langle (x-\lambda_{r_1}) \rangle \times 
 \R[x] /  \langle q_1(x) \rangle \times \ldots \times  \R[x] /  \langle q_{r_2}(x) \rangle \\
&\cong \R (\lambda_1) \times \ldots \times \R( \lambda_{r_1}) \times \R ( \mu_1) \times \ldots \times R(\mu_{r_2}) \\
&\cong \R ^{r_1} \times \C ^{r_2}.
\end{align*}
Since $r_1$ is the number of real roots of $p(x)$ and $r_2$ is the number of conjugate pairs of non-real roots of $p(x)$, we have that $r_1, r_2 \in \Z^+$ and $r_1 + 2r_2 = n$.  We observe (and this will be useful for us later) that at least one of $r_1$ and $r_2$ is strictly positive.   The non-negative integer $r_1$ is called \emph{the number of real places of $\mathsf{k}$}, and the non-negative integer $r_2$ is called \emph{the number of complex places of $\mathsf{k}$}.  If $r_2 = 0$, then $\mathsf{k}$ is said to be \emph{totally real}, and if $r_1 = 0$, then $\mathsf{k}$ is said to be \emph{totally complex}.

\begin{theorem}[Theorem 1.5 of \cite{grayson} or Theorem IV.1.18 of \cite{kbook}] \label{number-field-rank}
Let $\mathsf{k}$ be a number field with $r_1$ real places and $r_2$ complex places. Then for $n \in \Z^+$,
$$\rank K_n(\mathsf{k})= \left\{ \begin{array}{ll} 1 & n=0 \\ 
\infty & n=1 \\
0 & n=2k \text{ and } k>0 \\
r_1 + r_2 & n=4k+1 \text{ and } k >0 \\
r_2 & n=4k+3 \text{ and } k \geq 0. \end{array} \right.$$
\end{theorem}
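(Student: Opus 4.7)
The plan is to reduce to Borel's computation of the ranks of the $K$-groups of the ring of integers $\mathcal{O}_\mathsf{k}$, via Quillen's localization sequence, after settling the low-degree cases directly.

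For $n=0$ we have $K_0(\mathsf{k}) = \Z$ since $\mathsf{k}$ is a field, so $\rank K_0(\mathsf{k}) = 1$. For $n=1$ we have $K_1(\mathsf{k}) = \mathsf{k}^\times$, and the $\mathfrak{p}$-adic valuations at the nonzero prime ideals $\mathfrak{p}$ of $\mathcal{O}_\mathsf{k}$ assemble into a surjection $\mathsf{k}^\times \to \bigoplus_\mathfrak{p} \Z$ onto a free abelian group of countably infinite rank, so $\rank K_1(\mathsf{k}) = \infty$.

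For $n \geq 2$ I would invoke the Quillen localization sequence for the Dedekind domain $\mathcal{O}_\mathsf{k}$,
\begin{equation*}
\cdots \to \bigoplus_\mathfrak{p} K_n(\mathcal{O}_\mathsf{k}/\mathfrak{p}) \to K_n(\mathcal{O}_\mathsf{k}) \to K_n(\mathsf{k}) \to \bigoplus_{\mathfrak{p}} K_{n-1}(\mathcal{O}_\mathsf{k}/\mathfrak{p}) \to \cdots,
\end{equation*}
where $\mathfrak{p}$ ranges over the nonzero primes of $\mathcal{O}_\mathsf{k}$. Each residue field $\mathcal{O}_\mathsf{k}/\mathfrak{p}$ is finite, and by Quillen's calculation (as used earlier in Section~4) its higher $K$-groups are finite cyclic groups. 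Since $\rank$ is an exact size function (Proposition~\ref{rank-elem-facts-prop}(iv) and Example~\ref{example exact size functions}), the two flanking direct sums are torsion and contribute nothing, so $\rank K_n(\mathsf{k}) = \rank K_n(\mathcal{O}_\mathsf{k})$ for all $n \geq 2$.

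It then remains to invoke Borel's theorem computing $\rank K_n(\mathcal{O}_\mathsf{k})$. Borel identifies $K_n(\mathcal{O}_\mathsf{k}) \otimes \Q$ (for $n \geq 2$) with the stable primitive rational cohomology of the arithmetic group $\mathrm{SL}(\mathcal{O}_\mathsf{k})$ in degree $n+1$, and computes the latter via the relative Lie algebra cohomology of $\mathrm{SL}_N(\mathsf{k} \otimes_\Q \R) \cong \mathrm{SL}_N(\R)^{r_1} \times \mathrm{SL}_N(\C)^{r_2}$ with respect to a maximal compact subgroup. The $\mathrm{SL}_N(\R)$-factors contribute primitive classes in degrees $\equiv 1 \pmod 4$, while the $\mathrm{SL}_N(\C)$-factors contribute in degrees $\equiv 1, 3 \pmod 4$; summing over the places produces the coefficients $r_1 + r_2$ and $r_2$ in the stated formulas, and the vanishing in even positive degrees reflects the absence of primitive classes there. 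The main obstacle is Borel's theorem itself, whose proof draws on a substantial body of machinery — reduction theory for arithmetic groups, Matsushima-type vanishing results, and the comparison between group cohomology and cohomology of locally symmetric spaces — that lies well outside the elementary $K$-theoretic toolkit developed in the present paper. Consequently the argument I have in mind is really a reduction plus a reference, rather than a self-contained new proof.
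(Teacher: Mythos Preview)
Your sketch is correct and follows the standard route --- low degrees by hand, then Quillen's localization sequence to reduce $K_n(\mathsf{k})$ to $K_n(\mathcal{O}_\mathsf{k})$ modulo torsion, and finally Borel's theorem for the ranks of $K_n(\mathcal{O}_\mathsf{k})$. This is exactly the argument recorded in the cited references (Grayson \cite{grayson} and Weibel \cite[Theorem~IV.1.18]{kbook}).

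However, note that the paper does \emph{not} supply its own proof of this statement at all: Theorem~\ref{number-field-rank} is presented purely as a quotation from the literature, with the attribution built into the theorem heading and no accompanying proof environment. The authors treat it as an external input used to feed specific rank values into Theorem~\ref{size-fct-singular-from-K-groups-thm} and Corollary~\ref{K0-Kn-1-Kn-size-fct-Mor-Eq-cor}. So there is nothing to compare against beyond the citation itself; your outline simply unpacks what lies behind that citation, and your closing remark that it is ``really a reduction plus a reference'' is apt --- the paper already takes that stance by citing rather than proving.
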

\noindent In particular, $K_{6+4k} (\mathsf{k})$ is a torsion group and $K_{5+4k} (\mathsf{k})$ has strictly positive finite rank for any $k \in \Z^{+}$.

\begin{theorem}\label{number-field-sing-thm}
Let $\mathsf{k}$ be a number field with $r_1$ real places and $r_2$ complex places. 
If $E$ is a graph with finitely many vertices, then for any $k \in \Z^+$
$$|E^0 _\sing|=\rank(K_0 (L_\mathsf{k} (E)))- \frac{\rank(K_{6+4k} (L_\mathsf{k} (E)))}{r_1 + r_2}.$$
In addition, if $E$ and $F$ are simple graphs with finitely many vertices and an infinite number of edges, then the following are equivalent:
\begin{itemize}
\item[(i)] $L_\textsf{k}(E)$ and $L_\textsf{k}(F)$ are Morita equivalent.
\item[(ii)] $K_0(L_\textsf{k}(E)) \cong K_0(L_\textsf{k}(F))$ and $K_{6+4k}(L_\textsf{k}(E)) \cong K_{6+4k}(L_\textsf{k}(F))$ for all $k \in \Z^+$.
\item[(iii)] $K_0(L_\textsf{k}(E)) \cong K_0(L_\textsf{k}(F))$ and $K_{6+4k}(L_\textsf{k}(E)) \cong K_{6+4k}(L_\textsf{k}(F))$ for some $k \in \Z^+$.
\end{itemize}
In addition, if  $\mathsf{k}$ is not totally real (i.e., $r_2 \neq 0$), then whenever $E$ is a graph with finitely many vertices and $k \in \Z^+$ we have
$$|E^0 _\sing|=\rank(K_0 (L_\mathsf{k} (E)))- \frac{\rank(K_{4+4k} (L_\mathsf{k} (E)))}{r_2}.$$
Moreover, if $\mathsf{k}$ is not totally real (i.e., $r_2 \neq 0$), then whenever $E$ and $F$ are simple graphs with finitely many vertices and an infinite number of edges, the following are equivalent:
\begin{itemize}
\item[(i)] $L_\textsf{k}(E)$ and $L_\textsf{k}(F)$ are Morita equivalent.
\item[(ii)] $K_0(L_\textsf{k}(E)) \cong K_0(L_\textsf{k}(F))$ and $K_{4+2k}(L_\textsf{k}(E)) \cong K_{4+2k}(L_\textsf{k}(F))$ for any $k \in \Z^+$.
\item[(iii)] $K_0(L_\textsf{k}(E)) \cong K_0(L_\textsf{k}(F))$ and $K_{4+2k}(L_\textsf{k}(E)) \cong K_{4+2k}(L_\textsf{k}(F))$ for some $k \in \Z^+$.
\end{itemize}
\end{theorem}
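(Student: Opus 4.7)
The plan is to apply Theorem~\ref{exact-size-fct-singular-from-K-groups-thm} and Corollary~\ref{K0-Kn-1-Kn-exact-Mor-Eq-cor} with the exact size function $F = \rank$, which is an exact size function by Proposition~\ref{rank-elem-facts-prop} (as noted in Example~\ref{example exact size functions}). The key input will be the rank computation of $K_n(\mathsf{k})$ for a number field $\mathsf{k}$ from Theorem~\ref{number-field-rank}, together with the observation that at least one of $r_1, r_2$ is strictly positive since $r_1 + 2r_2 = [\mathsf{k}:\Q] \geq 1$. Also, of course $r_1, r_2 < \infty$ since $\mathsf{k}$ is a finite extension of $\Q$.

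First I would handle the case $n = 6+4k$ with $k \in \Z^+$. Since $n$ is even and positive, Theorem~\ref{number-field-rank} gives $\rank K_n(\mathsf{k}) = 0 < \infty$, and since $n-1 = 5+4k = 4(k+1)+1$, the same theorem gives $\rank K_{n-1}(\mathsf{k}) = r_1 + r_2$; by the observations above, $0 < r_1 + r_2 < \infty$, so the hypotheses of Theorem~\ref{exact-size-fct-singular-from-K-groups-thm} are satisfied. Substituting into the formula of that theorem and simplifying yields
$$|E^0_\sing| = \rank K_0(L_\mathsf{k}(E)) - \frac{\rank K_{6+4k}(L_\mathsf{k}(E))}{r_1 + r_2}.$$
The equivalence of (i), (ii), (iii) for the pair $(K_0, K_{6+4k})$ then follows from Corollary~\ref{K0-Kn-1-Kn-exact-Mor-Eq-cor}: (i) $\Rightarrow$ (ii) since Morita equivalence preserves all algebraic $K$-groups, (ii) $\Rightarrow$ (iii) is trivial, and (iii) $\Rightarrow$ (i) is exactly the content of the corollary applied to the single $k$ produced by (iii).

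For the second half I assume $r_2 \neq 0$. For $n = 4+4k$, $n$ is even and positive, so $\rank K_n(\mathsf{k}) = 0$, while $n-1 = 3+4k = 4k+3$ gives $\rank K_{n-1}(\mathsf{k}) = r_2$, which satisfies $0 < r_2 < \infty$ by hypothesis. Theorem~\ref{exact-size-fct-singular-from-K-groups-thm} again applies and produces the stated formula. For the final chain of equivalences with $K_{4+2k}$, I would split on the parity of $k$: if $k$ is even, write $k = 2j$, then $4+2k = 4+4j$ is $\equiv 0 \pmod 4$ and $3+2k = 4j+3$, so $\rank K_{3+2k}(\mathsf{k}) = r_2$; if $k$ is odd, write $k = 2j+1$, then $4+2k = 6+4j$ and $3+2k = 4j+5 = 4(j+1)+1$, so $\rank K_{3+2k}(\mathsf{k}) = r_1+r_2$. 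In both cases $\rank K_{4+2k}(\mathsf{k}) = 0$ and $\rank K_{3+2k}(\mathsf{k})$ is strictly between $0$ and $\infty$ (using $r_2 \neq 0$), so Corollary~\ref{K0-Kn-1-Kn-exact-Mor-Eq-cor} applies and gives (i) $\Leftrightarrow$ (ii) $\Leftrightarrow$ (iii) by the same argument as above.

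The only step requiring any care is the bookkeeping for residues of the indices modulo $4$ to identify when $\rank K_{n-1}(\mathsf{k})$ is $r_2$ versus $r_1+r_2$, and to verify that this rank is positive (which is where the hypothesis $r_2 \neq 0$ enters in the second half, whereas for the first half the weaker fact $r_1+r_2 \geq 1$ suffices and holds for every number field). There is no genuine technical obstacle, since the substantive work has already been carried out in Theorem~\ref{exact-size-fct-singular-from-K-groups-thm} and Corollary~\ref{K0-Kn-1-Kn-exact-Mor-Eq-cor}; the present theorem is the specialization of those results to the case where the underlying field is a number field and the exact size function is $\rank$, with Theorem~\ref{number-field-rank} providing the explicit numerical values.
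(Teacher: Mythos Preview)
Your proof is correct and follows essentially the same approach as the paper. The only difference is that you invoke Theorem~\ref{exact-size-fct-singular-from-K-groups-thm} and Corollary~\ref{K0-Kn-1-Kn-exact-Mor-Eq-cor} (the exact size function versions) while the paper cites Theorem~\ref{size-fct-singular-from-K-groups-thm} and Corollary~\ref{K0-Kn-1-Kn-size-fct-Mor-Eq-cor} (the general size function versions); since $\rank K_{n}(\mathsf{k}) = 0$ for all even $n \geq 2$, both pairs of results apply and yield the same formula after the obvious simplification.
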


\begin{proof}
Proposition~\ref{rank-elem-facts-prop} and Proposition~\ref{exact-are-size-fcts-prop} imply that $\rank(-)$ is a size function.  We now apply Theorem~\ref{size-fct-singular-from-K-groups-thm} and Corollary~\ref{K0-Kn-1-Kn-size-fct-Mor-Eq-cor} noting that Theorem~\ref{number-field-rank} implies that $\rank K_{2k}(\mathsf{k}) = 0$,  $\rank K_{3+4k}(\mathsf{k}) = r_2$, and $\rank K_{5+4k}(\mathsf{k}) = r_1 + r_2 $.
\end{proof}

\begin{remark}
Let $E$ be a simple graph with finitely many vertices and an infinite number of edges.  Theorem~\ref{number-field-sing-thm} shows that if $\mathsf{k}$ is a number field, then the pair $(K_0(L_\mathsf{k}(E)), K_6(L_\mathsf{k}(E)))$ is a complete Morita equivalence invariant for $L_\mathsf{k}(E)$, and if $\mathsf{k}$ is not totally real, then the pair $(K_0(L_\mathsf{k}(E)), K_4(L_\mathsf{k}(E)))$ is a complete Morita equivalence invariant for $L_\mathsf{k}(E)$.  
\end{remark}

\end{document}